\theoremstyle{plain}
\newtheorem{theorem}{Theorem}[section]
\newtheorem{lemma}[theorem]{Lemma}
\newtheorem{corollary}[theorem]{Corollary}
\newtheorem{proposition}[theorem]{Proposition}
\newtheorem{remark}[theorem]{Remark}
\theoremstyle{definition}
\newtheorem{definition}[theorem]{Definition}
\newtheorem{example}[theorem]{Example}
\newcommand{\kah}{K\"{a}hler }
\newcommand{\idd}{i\partial\overline{\partial}}
\subjclass[2020]{14F17, 14F18, 32L10, 32L20}
\keywords{ 
$L^2$-estimates, singular Hermitian metrics, cohomology vanishing, logarithmic sheaf.}
\begin{document}
\title
[$L^2$-type Dolbeault isomorphisms and vanishing theorems] 
{$L^2$-type Dolbeault isomorphisms and vanishing theorems for logarithmic sheaves twisted by multiplier ideal sheaves}
\author{Yuta Watanabe}
\date{}

\begin{abstract}
   In this article, we first establish an $L^2$-type Dolbeault isomorphism for the sheaf of logarithmic differential forms twisted by the multiplier ideal sheaf. 
   By using this isomorphism and $L^2$-estimates equipped with a singular Hermitian metric, we obtain logarithmic vanishing theorems involving multiplier ideal sheaves on compact K\"ahler manifolds with simple normal crossing divisors.
\end{abstract}

\vspace{-5mm}

\maketitle

\tableofcontents

\vspace{-8mm}

\section{Introduction}

The vanishing theorem of cohomology is one of the central topics in several complex variables and complex algebraic geometry.
Starting with the Kodaira vanishing theorem (cf. \cite{Kod53}) for positive line bundles, many vanishing theorems have been studied, such as Nakano (cf. \cite{Nak73}, \cite{Ina22}) on vector bundles, Kawamata-Viehweg (cf. \cite{Kaw82}, \cite{Vie82}) for nef and big divisors, Bogomolove-Sommese (cf. \cite{Bog78}, \cite{Wu20}, \cite{Wat22a}, \cite{LMNWZ22}) on positivity for Kodaira dimension etc., and Nadel-Demailly (cf. \cite{Dem93}, \cite{Nad89}, \cite{Wat22b}) involving multiplier ideal sheaves.
Recently, Huang, Liu, Wan and Yang proved the following logarithmic vanishing theorem in \cite{HLWY16}: 

\begin{theorem}$(\mathrm{cf.~[HLWY22,\,Theorem\,1.1]})$\label{Log V-thm in HLWY16}
Let $X$ be a compact \kah manifold of dimension $n$ and $D=\sum^s_{j=1}D_j$ be a simple normal crossing divisor in $X$. 
Let $A$ be a holomorphic line bundle and $\Delta=\sum^s_{j=1}a_jD_j$ be an $\mathbb{R}$-divisor with $a_j\in[0,1]$ such that $A\otimes\mathcal{O}_X(\Delta)$ is a $k$-positive $\mathbb{R}$-line bundle in the sense of Definition \ref{def of k-positivity}.
Then for any nef line bundle $N$, we have 
\begin{align*}
    H^q(X,\Omega^p_X(\log D)\otimes A\otimes N)=0 \quad \mathit{for~any}~\,\,p+q\geq n+k.
\end{align*}
\end{theorem}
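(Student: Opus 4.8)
The approach I would take models the logarithmic cohomology by an $L^2$-Dolbeault cohomology on $X\setminus D$, equipped with a complete \kah metric and a carefully twisted metric on $A\otimes N$, and then reads off the vanishing from the Bochner--Kodaira--Nakano inequality, the needed positivity being exactly what $k$-positivity provides; this is the kind of argument the paper develops and later upgrades with multiplier ideal sheaves. \textbf{Step 1 (normalization).} Since $k$-positivity is an open condition and $X$ is compact, I would first perturb $\Delta$ — decreasing the coefficients equal to $1$ and increasing those equal to $0$ by a tiny amount, via the obvious smooth change of background metrics on the $\mathbb{R}$-line bundle — so as to assume $a_j\in(0,1)$ for all $j$ without changing the statement (the sheaf $\Omega^p_X(\log D)$ depends only on $D$). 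Moreover, nefness of $N$ on the compact \kah manifold $X$ gives, for every $\varepsilon>0$, a smooth metric $g_\varepsilon$ on $N$ with $i\Theta_{g_\varepsilon}(N)\ge-\varepsilon\,\omega$ for a fixed \kah form $\omega$. Fixing smooth metrics $h_A$ on $A$ and $h_j$ on $\mathcal{O}_X(D_j)$ with $i\Theta_{h_A}+\sum_j a_j\,i\Theta_{h_j}$ $k$-positive (this is what Definition \ref{def of k-positivity} gives, after a suitable choice of background metrics), the $(n-k+1)$-th largest eigenvalue of this form with respect to $\omega$ is $\ge\delta_0>0$ on $X$ by compactness, and I fix $\varepsilon<\delta_0$.

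\textbf{Step 2 (the $L^2$ model).} Let $X^{\circ}:=X\setminus D$, let $\widehat\omega$ be a complete \kah metric on $X^{\circ}$ of Poincar\'e type along $D$, and let $s_j$ be the defining section of $D_j$, scaled so $\sup|s_j|_{h_j}<1$. On $L:=A\otimes N$ over $X^{\circ}$ put the singular metric $h:=h_A\otimes g_\varepsilon\otimes\prod_j|s_j|^{2a_j}_{h_j}$. Two things then hold. First, over $X^{\circ}$ one has $i\Theta_h(L)=\bigl(i\Theta_{h_A}+\sum_j a_j\,i\Theta_{h_j}\bigr)+i\Theta_{g_\varepsilon}(N)$, a smooth $(1,1)$-form that is $k$-positive with uniform margin at least $\delta_0-\varepsilon$. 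Second — and this is an $L^2$-Dolbeault isomorphism of exactly the kind this paper establishes, in its multiplier-ideal-free incarnation — the weights combine so that near every stratum of $D$ a form is $L^2$ in bidegree $(p,0)$ precisely when it has at worst logarithmic poles: the Poincar\'e volume makes order-one poles borderline, and the factors $|z_j|^{2a_j}$ with $0<a_j<1$ tip exactly the order-one case into $L^2$ while keeping everything of higher order out. Hence $\bigl(L^{p,\bullet}_{(2)}(X^{\circ},L;\widehat\omega,h),\bar\partial\bigr)$ is a fine resolution of $\Omega^p_X(\log D)\otimes A\otimes N$ and computes $H^q(X,\Omega^p_X(\log D)\otimes A\otimes N)$.

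\textbf{Step 3 (Bochner--Kodaira--Nakano).} On the complete \kah manifold $(X^{\circ},\widehat\omega)$ an $L$-valued $L^2$-harmonic $(p,q)$-form $u$ lies in the domains of $\bar\partial$ and $\bar\partial^{*}$; approximating by compactly supported forms (this is where completeness enters) the Bochner--Kodaira--Nakano inequality gives
\[
0\;=\;\|\bar\partial u\|^2+\|\bar\partial^{*}u\|^2\;\ge\;\int_{X^{\circ}}\bigl\langle[\,i\Theta_h(L),\Lambda_{\widehat\omega}\,]\,u,\,u\bigr\rangle\,dV_{\widehat\omega}.
\]
The decisive linear-algebra input is that a real $(1,1)$-form with at least $n-k+1$ positive eigenvalues makes the Hermitian operator $[\,\cdot\,,\Lambda]$ positive definite on $(p,q)$-forms whenever $p+q\ge n+k$, with a quantitative lower bound in terms of those eigenvalues (the case $k=1$ is the identity behind Akizuki--Kodaira--Nakano). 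Combined with the uniform margin from Step 2 this forces $u\equiv0$ for $p+q\ge n+k$, so the harmonic space vanishes and, through the $L^2$-Dolbeault isomorphism, $H^q(X,\Omega^p_X(\log D)\otimes A\otimes N)=0$ in that range.

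\textbf{Main obstacle.} The Bochner inequality and the eigenvalue lemma are routine; the real work is Step 2 — proving that the $L^2$-complex for $(\widehat\omega,h)$ on $X^{\circ}$ resolves \emph{exactly} $\Omega^p_X(\log D)\otimes A\otimes N$. This requires careful local $L^2$-$\bar\partial$-estimates on punctured polydisks near the crossings of $D$ (to get local exactness and to pin down the resolved sheaf in every bidegree), the comparison between $L^2$-cohomology on $X^{\circ}$ and sheaf cohomology on $X$, and the reduction in Step 1 to dispose of the boundary coefficients $a_j\in\{0,1\}$; it is precisely this isomorphism that the paper must set up (and then refine so that a singular Hermitian metric on $A$ introduces a multiplier ideal sheaf). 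A more algebraic alternative would chain the residue exact sequences for $\Omega^{\bullet}_X(\log D)$ with Akizuki--Nakano vanishing for $k$-positive bundles on compact \kah manifolds, but it is less flexible under the metric perturbations needed to encode $\Delta$ and does not adapt to the multiplier-ideal refinement.
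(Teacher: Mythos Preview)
Your overall strategy---$L^2$-Dolbeault model on $Y=X\setminus D$ with a Poincar\'e-type metric, then Bochner--Kodaira--Nakano---is indeed the route of HLWY and of this paper (note that Theorem~\ref{Log V-thm in HLWY16} is only quoted here; the closest in-house argument is the proof of Theorem~\ref{Log V-thm of k-posi + psef in 4}). But the metric $h=h_A\otimes g_\varepsilon\otimes\prod_j|s_j|_{h_j}^{2a_j}$ of Step~2 is missing the logarithmic factor $\prod_j(\log|s_j|_{h_j}^2)^{2\alpha}$, and this breaks both the resolution and the global estimate. On $Y$ your curvature $i\Theta_h(L)=i\Theta_{h_A}+\sum_j a_j\,i\Theta_{h_j}+i\Theta_{g_\varepsilon}$ is a \emph{bounded} $(1,1)$-form, while $\widehat\omega$ blows up like the Poincar\'e metric; hence the eigenvalues of $i\Theta_h(L)$ relative to $\widehat\omega$ tend to $0$ along $D$, the operator $[\,i\Theta_h(L),\Lambda_{\widehat\omega}\,]$ is only pointwise and not uniformly positive, and vanishing of $L^2$-harmonic forms does not force vanishing of $L^2$-cohomology (without a uniform lower bound you cannot solve $\overline\partial$ with estimates, and $\mathrm{Im}\,\overline\partial$ need not be closed on the complete manifold). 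The same defect blocks local exactness at $q\ge 1$: on a punctured polydisk one solves $\overline\partial$ by making $V=\Omega^p_Y\otimes K_Y^{-1}\otimes L$ Nakano positive with respect to $\omega_P$, and a bounded curvature on $L$ cannot dominate the Poincar\'e-scale curvature of $\Omega^p_Y\otimes K_Y^{-1}$.

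The repair, carried out by HLWY and in this paper, is to insert $(\log|s_j|_{h_j}^2)^{2\alpha}$ with $\alpha\gg0$ into the metric: this contributes a Poincar\'e-type term to the curvature, so that locally $i\Theta\ge 2\alpha\,\omega_P$ (the lemma following Proposition~\ref{L2-type Log Dol isom at q=0 with sHm line}), which is what drives local exactness via Lemma~\ref{Lemma positivity of h_V}. Globally one then \emph{defines} the Poincar\'e-type reference metric as $\omega_Y:=i\Theta_h(L)+c\,\omega$ for a suitably small $c>0$; the eigenvalues of $i\Theta_h(L)$ with respect to $\omega_Y$ are now explicit, lie in $[-\tfrac{1}{4n},1)$, and exceed $1-\tfrac{1}{4n}$ from the $k$-th one on, giving $[\,i\Theta_h(L),\Lambda_{\omega_Y}\,]\ge\tfrac12$ uniformly on $(p,q)$-forms with $p+q\ge n+k$ (see the eigenvalue computation in the proof of Theorem~\ref{Log V-thm of k-posi + psef in 4}). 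With this uniform bound one solves $\overline\partial$ globally with $L^2$ estimates and the vanishing follows. In short: the coupling between the twist on $L$ and the choice of $\widehat\omega$ is not incidental but the heart of the argument, and your Step~2 metric decouples them.
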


Let $\varphi$ be a plurisubharmonic function on $X$. 
We define the $\it{multiplier}$ $\it{ideal}$ $\it{sheaf}$ to be the ideal subsheaf $\mathscr{I}(\varphi)\subset\mathcal{O}_X$ of germs of holomorphic functions $f\in\mathcal{O}_x$ such that $|f|^2e^{-\varphi}$ is locally integrable near $x\in X$.
For a singular Hermitian metric $h$ on a holomorphic line bundle, the multiplier ideal sheaf is also defined by $\mathscr{I}(h):=\mathscr{I}(\varphi)$ where $h=e^{-\varphi}$ locally. 
It is well known that $\mathscr{I}(\varphi)$ is coherent (see \cite{Nad89}).

Our main purpose in this article is to establish logarithmic vanishing theorems involving multiplier ideal sheaves such as Nadel-Demailly.
Let $X$ be a compact \kah manifold and $Y=X\setminus D$ where $D$ is a simple normal crossing divisor. Let $F$ be a holomorphic vector bundle and $L$ be a holomorphic line bundle equipped with a singular Hermitian metric $h$ which is pseudo-effective. 
First, we obtain an $L^2$-type Dolbeault isomorphism for the logarithmic sheaf $\Omega^p_X(\log D)$ twisted by the multiplier ideal sheaf $\mathscr{I}(h)$ that follows from the $L^2$ fine resolution $(\mathscr{L}^{p,\ast}_{F\otimes L,h^F_Y\otimes h,\omega_P},\overline{\partial})$:
\begin{align*}
    0\longrightarrow \Omega_X^p(\log D)\otimes \mathcal{O}_X(F\otimes L)\otimes \mathscr{I}(h) \longrightarrow \mathscr{L}^{p,\ast}_{F\otimes L,h^F_Y\otimes h,\omega_P},
\end{align*}
where $h^F_Y$ is a suitably chosen smooth Hermitian metric on $F|_Y$ and $\omega_P$ is a Poincar\'e type metric on $Y$.
Similarly for a singular Hermitian metric $h$ on a holomorphic vector bundle $E$, if $h$ is Griffiths semi-positive then we get an $L^2$-type Dolbeault isomorphism that follows from the $L^2$ fine resolution $(\mathscr{L}^{p,\ast}_{F\otimes E\otimes\mathrm{det}\,E,h^F_Y\otimes h\otimes\mathrm{det}\,h,\omega_P},\overline{\partial})$:
\begin{align*}
    0\longrightarrow \Omega_X^p(\log D)\otimes \mathcal{O}_X(F)\otimes \mathscr{E}(h\otimes\mathrm{det}\,h) \longrightarrow \mathscr{L}^{p,\ast}_{F\otimes E\otimes\mathrm{det}\,E,h^F_Y\otimes h\otimes\mathrm{det}\,h,\omega_P},
\end{align*}
where $\mathscr{E}(h\otimes\mathrm{det}\,h)\subseteq\mathcal{O}_X(E\otimes\mathrm{det}\,E)$ is coherent from Griffiths semi-positivity of $h$ (see \cite{HI20}, \cite{Ina22}).
Then, we establish the following logarithmic vanishing using this isomorphism and $L^2$-estimates equipped with a singular Hermitian metric.


\begin{theorem}\label{Log V-thm of k-posi + psef no nu condition}
    Let $X$ be a compact \kah manifold and $D$ be a simple normal crossing divisor in $X$. 
    Let $A$ be a holomorphic line bundle and $L$ be a holomorphic line bundle equipped with a singular Hermitian metric $h$ which is pseudo-effective, i.e. $i\Theta_{L,h}\geq0$ in the sense of currents. 
    If $A\otimes\mathcal{O}_X(D)$ is $k$-positive then for any nef line bundle $N$, we have that 
    \begin{align*}
        H^q(X,K_X\otimes\mathcal{O}_X(D)\otimes A\otimes N\otimes L\otimes \mathscr{I}(h))&=0,\\
        H^n(X,\Omega_X^p(\log D)\otimes A\otimes N\otimes L\otimes \mathscr{I}(h))&=0
    \end{align*}
    for $p,q\geq k$.
\end{theorem}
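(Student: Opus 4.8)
The plan is to obtain both vanishings from the $L^2$-type Dolbeault isomorphism stated in the Introduction, applied with $F:=A\otimes N$ and a suitable smooth Hermitian metric $h^F_Y$ on $F|_Y$. Taking $p=n$ in the first case, so that $\Omega_X^n(\log D)\otimes\mathcal{O}_X(F\otimes L)\otimes\mathscr{I}(h)=K_X\otimes\mathcal{O}_X(D)\otimes A\otimes N\otimes L\otimes\mathscr{I}(h)$, and keeping $p$ arbitrary in the second, the isomorphism identifies the cohomology groups in the statement with
\begin{align*}
H^{q}\bigl(\mathscr{L}^{n,\bullet}_{F\otimes L,\,h^F_Y\otimes h,\,\omega_P},\overline{\partial}\bigr)\qquad\text{and}\qquad H^{n}\bigl(\mathscr{L}^{p,\bullet}_{F\otimes L,\,h^F_Y\otimes h,\,\omega_P},\overline{\partial}\bigr),
\end{align*}
respectively. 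Since $\omega_P$ is a complete Kähler metric on $Y=X\setminus D$ and these groups are finite dimensional (being cohomology of a coherent sheaf on the compact space $X$), $L^2$-Hodge theory represents them by the spaces $\mathscr{H}^{n,q}$ and $\mathscr{H}^{p,n}$ of $L^2$-harmonic forms on $Y$ with values in $F\otimes L$ with respect to $h^F_Y\otimes h$ and $\omega_P$. Thus it suffices to show $\mathscr{H}^{n,q}=0$ for $q\ge k$ and $\mathscr{H}^{p,n}=0$ for $p\ge k$.

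The decisive input is the choice of $h^F_Y$ and the curvature of $F\otimes L=A\otimes N\otimes L$ it produces. Write $A=(A\otimes\mathcal{O}_X(D))\otimes\mathcal{O}_X(-D)$ and assemble $h^F_Y$ from: a smooth metric realizing the $k$-positivity of $A\otimes\mathcal{O}_X(D)$ (Definition \ref{def of k-positivity}); a smooth metric $h_D^{-1}$ on $\mathcal{O}_X(-D)$; a smooth metric $h_{N,\varepsilon}$ on $N$ with $i\Theta_{N,h_{N,\varepsilon}}\ge-\varepsilon\omega$ (available for every $\varepsilon>0$ since $N$ is nef); and a logarithmic correction $(-\log\|s_D\|^2)^{-a}$ along $D$ with $a>0$ large, matched to the normalization of $\omega_P$. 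The correction makes $h^F_Y$ decay mildly near $D$ — which is exactly what puts log-poled forms in $L^2$ and so keeps $h^F_Y$ admissible in the isomorphism — while $a\,i\partial\overline{\partial}\log(-\log\|s_D\|^2)$ contributes a large positive multiple of the Poincaré growth form in the directions transverse to $D$. Combining this with the $k$-positivity of $A\otimes\mathcal{O}_X(D)$ in the remaining directions, the bounded perturbation from $N$ (for $\varepsilon$ small), and $i\Theta_{L,h}\ge0$, one checks that $i\Theta_{F\otimes L,\,h^F_Y\otimes h}$ has, at every point of $Y$ and with respect to $\omega_P$, at least $n-k+1$ eigenvalues bounded below by a fixed positive constant $c$ and all eigenvalues bounded below by $-\varepsilon'$, where $\varepsilon'\to0$ as $\varepsilon\to0$.

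Granting this, the $L^2$-estimates equipped with a singular Hermitian metric from the Introduction finish the argument. On the complete Kähler manifold $(Y,\omega_P)$, the Bochner–Kodaira–Nakano inequality with the singular metric $h$ gives, for $u\in\mathscr{H}^{n,q}$,
\begin{align*}
0=\|\overline{\partial}u\|^2+\|\overline{\partial}^{*}u\|^2\ \ge\ \langle[i\Theta_{F\otimes L,\,h^F_Y\otimes h},\Lambda_{\omega_P}]u,u\rangle ,
\end{align*}
and at each point the integrand of the right-hand side equals $\sum_{|J|=q}\bigl(\sum_{j\in J}\mu_j\bigr)|u_J|^2$ in terms of the curvature eigenvalues $\mu_1\le\cdots\le\mu_n$; by the eigenvalue estimate each coefficient is $\ge(q-k+1)c-(k-1)\varepsilon'>0$ once $q\ge k$ and $\varepsilon'<c/(k-1)$, so $u\equiv0$. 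Applied to $u\in\mathscr{H}^{p,n}$ the integrand is $\sum_{|I|=p}\bigl(\sum_{i\in I}\mu_i\bigr)|u_I|^2$, with coefficients $\ge(p-k+1)c-(k-1)\varepsilon'>0$ for $p\ge k$, giving $u\equiv0$ (alternatively one first passes to $\mathscr{H}^{n-p,0}(Y,(F\otimes L)^{*})$ by $L^2$ Serre duality and runs the same computation for $(n-p,0)$-forms with the dual curvature). Since the groups in the statement do not depend on $\varepsilon$, fixing any admissible $\varepsilon$ completes the proof, and no limiting argument in $\varepsilon$ is needed.

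The main obstacle is the curvature analysis of the second paragraph: designing $h^F_Y$ — in particular the exponent $a$ and the normalization of $\omega_P$ — so that $[i\Theta_{F\otimes L,\,h^F_Y\otimes h},\Lambda_{\omega_P}]$ is pointwise positive on the relevant bidegrees over all of $Y$, while simultaneously (i) absorbing the blow-up of $\omega_P$ along each stratum of $D$ using only the bounded curvature furnished by $A\otimes\mathcal{O}_X(D)$, (ii) respecting $L^2$-integrability so that $h^F_Y$ stays within the range of the $L^2$-Dolbeault isomorphism, and (iii) keeping the negative contribution of $N$ dominated. Once this curvature property is secured, the interaction with the singular weight $\varphi=-\log h$ defining $\mathscr{I}(h)$ in the $L^2$-estimate and the Bochner computation itself are routine.
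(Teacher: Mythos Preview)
Your overall plan---reduce to an $L^2$-complex via the Dolbeault isomorphism and then kill the cohomology by a curvature estimate---is the paper's plan too, but there is a genuine gap in how you execute the second step.

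You pass to harmonic representatives and invoke the Bochner--Kodaira--Nakano inequality ``with the singular metric $h$''. This is not justified: since $h$ is only plurisubharmonic, $i\Theta_{L,h}$ is a closed positive current, not a smooth form, so neither the Chern connection of $h^F_Y\otimes h$, nor the pointwise operator $[i\Theta_{F\otimes L,\,h^F_Y\otimes h},\Lambda_{\omega_P}]$, nor the formal adjoint $\overline{\partial}^{\,*}$ and the associated Hodge decomposition make sense on $(Y,\omega_P)$ in the way your argument uses them. Finite-dimensionality of the sheaf cohomology does not supply a Hodge decomposition for the singular $L^2$-complex. The paper avoids this entirely: instead of harmonic forms it applies the $L^2$-estimate of Theorem~\ref{L2-estimate of k-posi + psef} (proved via Demailly's regularization of $h$, cf.\ Remark~\ref{Rem for L-estimate k-posi + psef}) directly. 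One only needs the \emph{smooth} curvature operator $B_{h^F_Y,\omega_Y}=[i\Theta_{F,h^F_Y},\Lambda_{\omega_Y}]$ to be uniformly bounded below on $(n,q)$- and $(p,n)$-forms; then for every $\overline{\partial}$-closed $f$ in the global $L^2$-space the right-hand side integral is finite and one solves $\overline{\partial}u=f$ with $L^2$-bounds, concluding via Lemma~\ref{Ext d-equation for hypersurface} that $u$ extends across $D$.

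A secondary point: rather than fixing $\omega_P$ in advance and trying to force the eigenvalue profile you describe, the paper takes $\omega_Y:=i\Theta_{\mathscr{F},h^{\mathscr{F}}_{\alpha,\varepsilon,1}}+2(4n+1)\gamma\omega$ (with $\mathscr{F}=A\otimes N$ and the explicit weight $h^{\mathscr{F}}_{\alpha,\varepsilon,1}$ containing the factors $\|\sigma_j\|^2(\log(\varepsilon\|\sigma_j\|^2))^{2\alpha}$) and checks that this $\omega_Y$ is itself of Poincar\'e type. With this choice the eigenvalues of $i\Theta_{\mathscr{F}}$ relative to $\omega_Y$ automatically lie in $[-1/4n,1)$ and are $\geq 1-1/4n$ from the $k$-th one on, giving $B_{h^{\mathscr{F}}_{\alpha,\varepsilon,1},\omega_Y}\geq 1/2$ on the relevant bidegrees without any delicate balancing against a pre-chosen $\omega_P$. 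Your log-correction $(-\log\|s_D\|^2)^{-a}$ also has the wrong sign relative to what the $L^2$-resolution requires: the metric in Theorem~\ref{L2-type Log Dolbeault isomorphism with sHm line} carries the factor $\|\sigma_j\|^2(\log\|\sigma_j\|^2)^{2\alpha}$, which \emph{grows} logarithmically near $D$ (after the $\|\sigma_j\|^2$ decay), and it is precisely this growth whose $i\partial\overline{\partial}$ produces the Poincar\'e-type positivity.
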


Here, $\Omega^n_X(\log D)=K_X\otimes\mathcal{O}_X(D)$. 
This cohomology vanishing for $(n,q)$-forms follows immediately from already known results (see \cite{Hua20}, \cite{Wat22b}) and this paper gives an another proof using the $L^2$-type Dolbeault isomorphism for logarithmic sheaves.

By using the Lelong number as an invariant of singularities with respect to singular Hermitian metrics, we prove the following logarithmic vanishing theorem with certain degrees of freedom in positivity, analogous to Theorem \ref{Log V-thm in HLWY16}.
The $\it{Lelong}$ $\it{number}$ of a plurisubharmonic function $\varphi$ on $X$ is defined by 
\begin{align*}
    \nu(\varphi,x):=\liminf_{z\to x}\frac{\varphi(z)}{\log|z-x|}
\end{align*}
for some coordinate $(z_1,\cdots,z_n)$ around $x\in X$.
For the relationship between the Lelong number of $\varphi$ and the integrability of $e^{-\varphi}$, the following important result obtained by Skoda in \cite{Sko72} is known; If $\nu(\varphi,x)<1$ then $e^{-2\varphi}$ is integrable around $x$.
From this, particularly if $\nu(-\log h,x)<2$ 
then $\mathscr{I}(h)=\mathcal{O}_{X,x}$ immediately.

\begin{theorem}\label{Log V-thm of k-posi + psef}
    Let $X$ be a compact \kah manifold and $D=\sum^s_{j=1}D_j$ be a simple normal crossing divisor in $X$. 
    Let $A$ be a holomorphic line bundle and $L$ be a holomorphic line bundle equipped with a singular Hermitian metric $h$ which is pseudo-effective, i.e. $i\Theta_{L,h}\geq0$ in the sense of currents. 
    If there exists $0<\delta\leq1$ such that $\nu(-\log h,x)<2\delta$ for all points in $D$ and that for an $\mathbb{R}$-divisor $\Delta=\sum^s_{j=1}a_jD_j$ with $a_j\in(\delta,1]$, the $\mathbb{R}$-line bundle $A\otimes\mathcal{O}_X(\Delta)$ is $k$-positive. 
    Then for any nef line bundle $N$, we have that 
    \begin{align*}
        H^q(X,K_X\otimes\mathcal{O}_X(D)\otimes A\otimes N\otimes L\otimes \mathscr{I}(h))&=0,\\
        H^n(X,\Omega_X^p(\log D)\otimes A\otimes N\otimes L\otimes \mathscr{I}(h))&=0
    \end{align*}
    for $p,q\geq k$.
\end{theorem}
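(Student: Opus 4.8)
\medskip
\noindent\textbf{Proof strategy.} The plan is to deduce both vanishings from the $L^2$-type Dolbeault isomorphism recalled above, applied with the vector bundle $F=A\otimes N$, followed by an $L^2$-estimate on the complete \kah manifold $Y=X\setminus D$ carrying a Poincar\'e type metric $\omega_P$. Two inputs are classical: $(Y,\omega_P)$ is complete \kah, and $N$ being nef admits, for every $\varepsilon>0$, a smooth metric $h_{N,\varepsilon}$ with $i\Theta_{N,h_{N,\varepsilon}}\ge-\varepsilon\omega$ for a fixed \kah metric $\omega$ on $X$. The genuinely new point is the interplay between $\mathscr{I}(h)$, the logarithmic poles along $D$, and the $k$-positivity of $A\otimes\mathcal{O}_X(\Delta)$. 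Observe first that since $\nu(-\log h,x)<2\delta\le2$ for every $x\in D$, Skoda's lemma gives $\mathscr{I}(h)_x=\mathcal{O}_{X,x}$ on $D$, so $\mathscr{I}(h)$ is the full structure sheaf near $D$: the multiplier ideal only ``acts'' in the interior of $Y$, where $h$ enters the $L^2$-analysis directly (its polar set handled by the usual complete-metric/regularisation device built into the Dolbeault isomorphism).

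\medskip
\noindent\textbf{Choice of metrics.} By $k$-positivity of $A\otimes\mathcal{O}_X(\Delta)$ fix a smooth metric $h_A$ on $A$ making $\Theta_0:=i\Theta_{A,h_A}+\sum_{j=1}^{s}a_j\,i\Theta_{\mathcal{O}_X(D_j),h_{D_j}}$ a $k$-positive smooth $(1,1)$-form on $X$, and normalise the smooth metrics $h_{D_j}$ on $\mathcal{O}_X(D_j)$ so that $|s_j|_{h_{D_j}}<e^{-1}$, where $s_j$ is the section cutting out $D_j$. On $A|_Y$ set
\[
h^{A}_Y:=h_A\cdot\prod_{j=1}^{s}|s_j|^{2c_j}_{h_{D_j}}\cdot\prod_{j=1}^{s}\bigl(-\log|s_j|^{2}_{h_{D_j}}\bigr)^{\alpha},\qquad h^F_Y:=h^{A}_Y\otimes h_{N,\varepsilon},
\]
with exponents chosen so that $\delta<c_j<1$, $c_j\le a_j$, and $\alpha>0$, $\varepsilon>0$, $\sum_j(a_j-c_j)$ are all small. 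Each factor has a role. A Poincar\'e--Lelong computation shows that on $Y$ the curvature $i\Theta_{F\otimes L,\,h^F_Y\otimes h}$ equals $\Theta_0$ plus: the bounded term $-\sum_j(a_j-c_j)i\Theta_{\mathcal{O}_X(D_j),h_{D_j}}+i\Theta_{N,h_{N,\varepsilon}}$, which is small (so the sum stays $k$-positive) away from $D$; the positive current $i\Theta_{L,h}$; and the contribution of the factors $(-\log|s_j|^{2}_{h_{D_j}})^{\alpha}$, which near $D_j$ is comparable to $\alpha\,\omega_P$ in the normal direction and therefore dominates everything near $D$. The same powers $|s_j|^{2c_j}$ also render logarithmic $(p,0)$-forms square-integrable for $\omega_P$ (e.g. $dz_j/z_j$ picks up the integrable weight $|z_j|^{2c_j}$), while $c_j<1$ still forbids genuine poles, so $(\mathscr{L}^{p,\ast}_{F\otimes L,\,h^F_Y\otimes h,\,\omega_P},\overline{\partial})$ resolves $\Omega^p_X(\log D)\otimes\mathcal{O}_X(A\otimes N\otimes L)$ twisted by a multiplier ideal whose local weight along a component $D_j$ has Lelong number at most $\nu(-\log h,\cdot)+2(1-c_j)<2\delta+2(1-\delta)=2$; by Skoda's lemma, $\delta<1$ leaving room to absorb the harmless $\log$-powers, this multiplier ideal is trivial near $D$, matching $\mathscr{I}(h)$. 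Consequently
\[
H^q\bigl(X,\Omega^p_X(\log D)\otimes A\otimes N\otimes L\otimes\mathscr{I}(h)\bigr)\;\cong\;H^{p,q}_{(2)}\bigl(Y,(A\otimes N\otimes L)|_Y;\,h^F_Y\otimes h,\,\omega_P\bigr),
\]
and it remains to kill the right-hand side for $(p,q)=(n,q)$, $q\ge k$, and $(p,q)=(p,n)$, $p\ge k$ --- the extremal bidegrees where Bochner--Kodaira--Nakano is sharpest.

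\medskip
\noindent\textbf{$L^2$-estimate and conclusion.} Let $\mathcal{G}:=(A\otimes N\otimes L)|_Y$ with metric $h^F_Y\otimes h$. On the complete \kah manifold $(Y,\omega_P)$ the Bochner--Kodaira--Nakano identity gives $\|\overline{\partial}u\|^2+\|\overline{\partial}^{*}u\|^2\ge\int_Y\langle[i\Theta_{\mathcal{G}},\Lambda_{\omega_P}]u,u\rangle$ for forms with values in $\mathcal{G}$; on $(n,q)$-forms (resp. $(p,n)$-forms) the curvature operator's eigenvalues are sums of $q$ (resp. $p$) of the $\omega_P$-eigenvalues of $i\Theta_{\mathcal{G}}$, which by the previous paragraph --- $k$-positivity of $\Theta_0$ away from $D$, and the $\alpha\,\omega_P$-lower bound near $D$ --- are $\ge c>0$ uniformly on $Y$ once $q\ge k$, resp. $p\ge k$. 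The density lemma on complete \kah manifolds together with the $L^2$-existence theorem for $\overline{\partial}$ then solves $\overline{\partial}v=u$ with $\|v\|^2\le c^{-1}\|u\|^2$ for any $\overline{\partial}$-closed $u$ of the relevant bidegree, whence the corresponding $L^2$-cohomology vanishes. As the resolved sheaf does not depend on $\alpha,\varepsilon$ or on the precise $c_j$, no limiting argument is needed, and both displayed cohomology groups vanish.

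\medskip
\noindent\textbf{Main obstacle.} The heart of the matter is the behaviour along $D$: the auxiliary factors $\prod_j|s_j|^{2c_j}_{h_{D_j}}$ and $\prod_j(-\log|s_j|^{2}_{h_{D_j}})^{\alpha}$ must be chosen large enough to transport the $k$-positivity of $A\otimes\mathcal{O}_X(\Delta)$ onto $i\Theta_{\mathcal{G}}|_Y$ and to make the curvature operator uniformly positive near $D$, yet tame enough ($\delta<c_j<1$, and $\alpha$ only a $\log$-power) that neither $\Omega^p_X(\log D)$ nor $\mathscr{I}(h)$ is disturbed; that this balance exists is precisely what $\delta<a_j\le1$ and $\nu(-\log h,\cdot)<2\delta$ (with $\delta<1$) buy. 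Checking that the $L^2$-type Dolbeault isomorphism genuinely applies with these degenerating metrics --- so that the $L^2$-cohomology on $Y$ really computes the sheaf cohomology on $X$ twisted by $\mathscr{I}(h)$ --- is where the machinery developed earlier in the paper does the work.
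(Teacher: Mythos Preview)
Your overall architecture matches the paper's: apply the $L^2$-type Dolbeault isomorphism for $\Omega_X^p(\log D)\otimes(A\otimes N\otimes L)\otimes\mathscr{I}(h)$ with a metric of the form $h_F\prod_j|s_j|^{2\tau_j}(-\log|s_j|^2)^{2\alpha}$ and a Poincar\'e-type metric on $Y$, then kill the $L^2$-cohomology by an $L^2$-estimate with the singular metric $h$ on $L$. The identification of the resolved sheaf with $\Omega_X^p(\log D)\otimes\mathscr{I}(h)$ under the Lelong-number hypothesis is exactly what Proposition~3.2 provides, so you are right to black-box it.

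The gap is in your curvature estimate. You work with a \emph{fixed} Poincar\'e metric $\omega_P$ and assert that the eigenvalues of $[i\Theta_{\mathcal G},\Lambda_{\omega_P}]$ are uniformly $\ge c>0$ on $(n,q)$- and $(p,n)$-forms. Two issues make this hard to justify as written. First, the log-factor contributes positivity only in the \emph{normal} directions to $D$; tangentially you rely on $\Theta_0|_{T_D}$, which by interlacing is still $k$-positive but whose small eigenvalues can be pushed negative by the perturbations from $N$ and from the bounded error term $\alpha\,i\Theta_{D_j}/(-\log|s_j|^2)$. Second, that bounded error term is $O(\alpha)$ on the compact part of $Y$, so if $\alpha$ is large (as the Dolbeault resolution requires for exactness at $q\ge1$) it is not small; your metric lacks the extra parameter that makes it small.

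The paper resolves both problems with one move: it does \emph{not} use a generic $\omega_P$, but takes the K\"ahler metric on $Y$ to be
\[
\omega_Y \;=\; i\Theta_{\mathscr F,\,h^{\mathscr F}_{\alpha,\varepsilon,\tau}} + 2(4n+1)\gamma\,\omega,
\]
built from the curvature itself (with the log-factor written as $(\log(\varepsilon\|\sigma_j\|^2))^{2\alpha}$, the auxiliary $\varepsilon$ making the bounded error term uniformly $\le\frac{\gamma}{2}\omega$ regardless of $\alpha$). Then the eigenvalues of $i\Theta_{\mathscr F}$ with respect to $\omega_Y$ are $\kappa_j=\lambda_j/(\lambda_j+2(4n+1)\gamma)$ where $\lambda_j$ are the eigenvalues with respect to $\omega$, so automatically $\kappa_j\in[-\tfrac{1}{4n},1)$ and $\kappa_j\ge1-\tfrac{1}{4n}$ for $j\ge k$ once $\gamma=c_0/32n^2$. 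This yields $[i\Theta_{\mathscr F},\Lambda_{\omega_Y}]\ge\tfrac12$ on the relevant bidegrees by a short explicit computation, with no separate ``near $D$ / away from $D$'' analysis. The $L^2$-Dolbeault isomorphism still applies because $\omega_Y$ is Poincar\'e-type, and the $L^2$-estimate (Theorem~2.14) then finishes the proof, with Demailly's lemma extending the solution across $D$.

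In short: your sketch has the right skeleton, but the step ``$[i\Theta_{\mathcal G},\Lambda_{\omega_P}]\ge c$ uniformly'' is exactly where the difficulty lives, and the paper's device of letting the Poincar\'e-type metric \emph{be} the curvature plus a small background term is what turns that step from a delicate estimate into a two-line computation.
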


When using the above $L^2$-type Dolbeault isomorphism for $\Omega_X^p(\log D)\otimes \mathcal{O}_X(F\otimes L)\otimes \mathscr{I}(h)$ in this proof, 
from the Lelong number condition, there exists a degree of freedom for the suitably chosen smooth Hermitian metric $h^F_Y$ on $F|_Y$ in the $L^2$ fine resolution $(\mathscr{L}^{p,\ast}_{F\otimes L,h^F_Y\otimes h,\omega_P},\overline{\partial})$, which directly becomes a degree of freedom for positivity.

Furthermore, we obtain analogous results to Theorem \ref{Log V-thm of k-posi + psef no nu condition} and \ref{Log V-thm of k-posi + psef} for big line bundles in $\S 4.1$.
And we establish logarithmic vanishing theorems for singular Hermitian metrics on holomorphic vector bundles with Griffiths positivity in $\S 4.2$.
Here, one of them is as follows. Finally, a counterexample for the extension to Kodaira-Akizuki-Nakano type is given in $\S 4.3$.


\begin{theorem}\label{Log V-thm of k-posi + Grif no nu condition}
    Let $X$ be a projective manifold and $D$ be a simple normal crossing divisor in $X$\!. \!Let $A$ be a holomorphic line bundle and 
    $E$ be a holomorphic vector bundle equipped with a singular Hermitian metric $h$. \!We assume that $h$ is Griffiths semi-positive and that $A\otimes\mathcal{O}_X(D)$ is $k$-positive.
    Then for any nef line bundle $N$, we have that
    \begin{align*}
        H^q(X,K_X\otimes\mathcal{O}_X(D)\otimes A\otimes N\otimes \mathscr{E}(h\otimes\mathrm{det}\,h))&=0,\\
        H^n(X,\Omega^p_X(\log D)\otimes A\otimes N\otimes \mathscr{E}(h\otimes\mathrm{det}\,h))&=0
    \end{align*}
    for $p,q\geq k$.
\end{theorem}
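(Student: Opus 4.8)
The plan is to run the $L^{2}$-machinery exactly as in the proof of Theorem \ref{Log V-thm of k-posi + psef no nu condition}, replacing the multiplier-ideal $L^{2}$-resolution for a line bundle by the vector-bundle $L^{2}$ fine resolution recalled in the introduction, and feeding in the (singular) Demailly--Skoda theorem as the extra input. Set $F:=A\otimes N$, a holomorphic line bundle on $X$. Since $h$ is Griffiths semi-positive, $\mathscr{E}(h\otimes\mathrm{det}\,h)\subseteq\mathcal{O}_X(E\otimes\mathrm{det}\,E)$ is coherent (cf. \cite{HI20}, \cite{Ina22}) and the complex $(\mathscr{L}^{p,\ast}_{F\otimes E\otimes\mathrm{det}\,E,\,h^F_Y\otimes h\otimes\mathrm{det}\,h,\,\omega_P},\overline{\partial})$ is a fine resolution of $\Omega^{p}_X(\log D)\otimes\mathcal{O}_X(F)\otimes\mathscr{E}(h\otimes\mathrm{det}\,h)$ on the complete \kah manifold $(Y,\omega_P)$ with $Y=X\setminus D$. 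Hence $H^{q}\!\big(X,\Omega^{p}_X(\log D)\otimes\mathcal{O}_X(F)\otimes\mathscr{E}(h\otimes\mathrm{det}\,h)\big)$ is computed by the global $\overline{\partial}$-cohomology of $L^{2}$ forms of bidegree $(p,q)$ on $(Y,\omega_P)$ with values in $F\otimes E\otimes\mathrm{det}\,E$ for the indicated metric. Taking $p=n$, so that $\Omega^{n}_X(\log D)=K_X\otimes\mathcal{O}_X(D)$, gives the first group in the statement, and looking at the top degree $q=n$ for general $p$ gives the second; it therefore suffices to prove vanishing of this $L^{2}$-cohomology in bidegree $(n,q)$ for $q\ge k$ and in bidegree $(p,n)$ for $p\ge k$.

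Next I would fix the metrics. Because $A\otimes\mathcal{O}_X(D)$ is $k$-positive, choose a smooth metric on it whose curvature has the sum of its $k$ smallest eigenvalues positive everywhere; by compactness of $X$ this bound is uniform, say $\ge 2\varepsilon_0\,\omega$ for a reference \kah metric $\omega$ and some $\varepsilon_0>0$. Using the canonical section $s_D$ of $\mathcal{O}_X(D)$ and a smooth metric $h_D$ on $\mathcal{O}_X(D)$, this induces a smooth metric $h^{A}_Y$ on $A|_Y$; since $N$ is nef and $X$ is projective, for each small $\varepsilon>0$ there is a smooth metric $h_{N,\varepsilon}$ on $N$ with $i\Theta_{N,h_{N,\varepsilon}}\ge-\varepsilon\omega$, and I set $h^{F}_Y:=h^{A}_Y\otimes h_{N,\varepsilon}$. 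On the vector-bundle factor, the singular Demailly--Skoda theorem (cf. \cite{HI20}, \cite{Ina22}) makes the curvature of $(E\otimes\mathrm{det}\,E,\,h\otimes\mathrm{det}\,h)$ contribute non-negatively in the relevant Bochner--Kodaira--Nakano identities on $(Y,\omega_P)$: Nakano semi-positivity for the $(n,q)$-estimate, and its dual-Nakano counterpart for the $(p,n)$-estimate. Consequently $[\,i\Theta_{F\otimes E\otimes\mathrm{det}\,E},\Lambda_{\omega_P}]$ acting on $(n,q)$-forms (resp. $(p,n)$-forms) is bounded below by the non-negative contribution of $E\otimes\mathrm{det}\,E$ plus that of $F$, the latter being at least the sum of the $q$ (resp. $p$) smallest eigenvalues of $i\Theta_F$ with respect to $\omega_P$; for $q\ge k$ (resp. $p\ge k$) that sum is positive, up to an $O(\varepsilon)$ error and up to the degeneration of $\Lambda_{\omega_P}$ along $D$.

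The hard part will be the analysis near $D$: since $\omega_P$ is of Poincar\'e type, $\Lambda_{\omega_P}$ collapses along $D$ and the positivity coming from $F$ degenerates there. I would handle this as in the construction of the $L^{2}$ fine resolution and as in Huang--Liu--Wan--Yang (cf. \cite{HLWY16}): a twisted, Donnelly--Fefferman type Bochner--Kodaira inequality with an auxiliary bounded weight built from $-\log|s_D|^{2}_{h_D}$ restores a definite lower bound near $D$, and the uniform slack $2\varepsilon_0\omega$ coming from $k$-positivity absorbs both this correction and the $-\varepsilon\omega$ contributed by $N$ once $\varepsilon$ is small. Carrying the singular metric $h$ through this argument requires a compatible regularization: approximate $h$ by smooth Griffiths semi-positive metrics $h_\nu\downarrow h$, apply the smooth Demailly--Skoda theorem and the $L^{2}$-existence theorem for $\overline{\partial}$ on the complete \kah manifold $(Y,\omega_P)$ for each $\nu$, and pass to the limit, compatibly with the definition of $\mathscr{E}(h\otimes\mathrm{det}\,h)$ — this is where projectivity of $X$ is used. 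Once a strictly positive lower bound for the curvature operator is in force on $(n,q)$-forms with $q\ge k$ and on $(p,n)$-forms with $p\ge k$, every $L^{2}$ $\overline{\partial}$-closed form in these bidegrees is $\overline{\partial}$-exact, so the corresponding $L^{2}$-cohomology, hence the sheaf cohomology in the statement, vanishes.

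In summary, the only genuinely new technical points beyond the line-bundle case (Theorems \ref{Log V-thm of k-posi + psef no nu condition}) are the regularization of the singular Griffiths metric $h$ together with its interaction with $\mathscr{E}(h\otimes\mathrm{det}\,h)$, and the use of the dual-Nakano form of Demailly--Skoda to obtain the $(p,n)$ vanishing; everything else is bookkeeping once the $L^{2}$-type Dolbeault isomorphism of the earlier section is granted, and the $\varepsilon$-room built into $k$-positivity is what makes the nef bundle $N$ and the Poincar\'e correction harmless.
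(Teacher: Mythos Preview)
Your overall strategy matches the paper's: reduce to $L^{2}$-cohomology on $Y$ via the fine resolution of Theorem~\ref{L2-type Log Dolbeault isomorphism with sHm vector} (in the case $\tau_j=1$), then solve $\overline{\partial}$ using the Nakano and dual-Nakano semi-positivity of $h\otimes\det h$ inherited from Griffiths semi-positivity of $h$. Two technical steps, however, diverge from the paper in ways that leave gaps.

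First, your plan to approximate $h$ globally by smooth Griffiths semi-positive metrics $h_\nu\downarrow h$ is not known to be possible; such regularization is only available locally on Stein polydiscs, and projectivity does not produce it. The paper uses projectivity for a different purpose: it takes a hypersurface $H$ with $X\setminus H$ Stein, so that $Y_H:=Y\setminus H$ is Stein, and then applies the $L^{2}$-estimate of Theorem~\ref{L2-estimate of k-posi + Grif} directly on $Y_H$. That estimate is formulated via the $L^{2}$-type Nakano semi-positivity of Definition~\ref{def Nakano semi-posi sing}, which already packages the needed solvability without any smoothing of $h$; the solution is then extended across $H\cup D$ by Lemma~\ref{Ext d-equation for hypersurface}. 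Second, the curvature lower bound near $D$ is not obtained by a Donnelly--Fefferman twist. Following the proof of Theorem~\ref{Log V-thm of k-posi + psef in 4} (and \cite{HLWY16}), the Poincar\'e-type metric is taken to be $\omega_Y=i\Theta_{\mathscr{F},h^{\mathscr{F}}_{\alpha,\varepsilon,1}}+2(4n+1)\gamma\omega$ itself, so the eigenvalues of $i\Theta_{\mathscr{F}}$ relative to $\omega_Y$ lie automatically in $[-\tfrac{1}{4n},1)$, and an explicit count gives $[i\Theta_{\mathscr{F}}\otimes\mathrm{id},\Lambda_{\omega_Y}]\ge\tfrac12$ on $(n,q)$- and $(p,n)$-forms for $p,q\ge k$. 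The $k$-positivity enters only through $\lambda^{j}_{\omega}\ge c_0>0$ for $j\ge k$, not as a uniform bound of the form $\ge 2\varepsilon_0\omega$, which a merely semi-positive curvature with possibly $k-1$ zero eigenvalues cannot satisfy.
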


\section{Preliminaries}

\subsection{Poincar\'e type metric and $\mathbb{R}$-divisors}

A divisor $D=\sum^s_{j=1}D_j$ on $X$ is called a $\it{simple}$ $\it{normal}$ $\it{crossing}$ $\it{divisor}$ if every irreducible component $D_j$ is smooth and all intersections are transverse.
The $\it{logarithmic}$ $\it{sheaf}$ $\Omega_X^p(\log D)$, introduced by Deligne in \cite{Del69}, is the sheaf of germs of differential $p$-forms on $X$ with at most logarithmic poles along $D$ whose sections on an open subset $U$ of $X$ are 
\begin{align*}
    \Gamma(U,\Omega^p_X(\log D)):=\{\alpha\in\Gamma(U,\Omega^p_X\otimes\mathcal{O}_X(D))~\mathrm{and}~d\alpha\in\Gamma(U,\Omega^{p+1}_X\otimes\mathcal{O}_X(D))\}.
\end{align*}

Define the complement $Y:=X\setminus D$. We can choose a local coordinate chart $(W;z_1,\ldots,z_n)$ of $X$ such that the locus of $D$ is given by $z_1\cdots z_t=0$ and $Y\cap W=W^*_r=(\Delta^*_r)^t\times(\Delta_r)^{n-t}$ where $\Delta_r$ (resp. $\Delta^*_r$) is the (resp. punctured) open disk of radius $r$ in the complex plane.
We give a \kah metric $\omega_Y$ only on the open manifold $Y$, which satisfies some special asymptotic behaviors along $D$.

\begin{definition}(Poincar\'e~type~metric)
    We say that the metric $\omega_Y$ on $Y$ is of $\mathit{Poincar\!\acute{e}}$ $\it{type}$ along $D$, if for each local coordinate chart $(W;z_1,\ldots,z_n)$ along $D$ the restriction $\omega_Y|_{W^*_r}$ is equivalent to the usual Poincar\'e type metric $\omega_P$ defined by 
    \begin{align*}
        \omega_P=i\sum^t_{j=1}\frac{dz_j\wedge d\overline{z}_j}{|z_j|^2(\log|z_j|^2)^2}+i\sum^n_{j=t+1}dz_j\wedge d\overline{z}_j.
    \end{align*}
\end{definition}

It is well-known that there always exists a \kah metric $\omega_Y$ on $Y$ which is of Poincar\'e type along $D$ (see [Zuc79,\,Section\,3]). Furthermore, this metric is complete and of finite volume.
The following example is used for the integrability of holomorphic sections with respect to the Poincar\'e type metrics.

\begin{example}\label{Example of integral}
    For any positive number $m$, the integral
    \begin{align*}
        \int^1_0r^\alpha(-\log r)^mdr 
    \end{align*}
    is finite if and only if $\alpha>-1$. In particular, if $m$ is a positive integer then we have
    \begin{align*}
        \int^1_0r^\alpha(-\log r)^mdr=\frac{m!}{(1+\alpha)^{m+1}}.
    \end{align*}
\end{example}

Second, we introduce the notion of $\mathbb{R}$-divisors and $\mathbb{R}$-line bundles.
\begin{itemize}
    \item [($a$)] $D$ is called an $\mathbb{R}$-$\it{divisor}$, if it is an element of $\mathrm{Div}_{\mathbb{R}}(X):=\mathrm{Div}(X)\otimes_{\mathbb{Z}}\mathbb{R}$. 
    In other words, an $\mathbb{R}$-divisor $D$ can be written as a finite sum of divisors with real coefficients, i.e. $D=\sum^k_{j=1} a_jD_j$ where $a_j\in\mathbb{R}$ and $D_j\in \mathrm{Div}(X)$.
    \item [($b$)] An $\mathbb{R}$-$\it{line~bundle}$ $L=\sum^k_{j=1}a_jL_j$ is a finite sum with some real numbers $a_1,\cdots,a_k$ and holomorphic line bundles $L_1,\cdots,L_k$.
\end{itemize}
For example, the $\mathbb{R}$-divisor $D=\sum^k_{j=1} a_jD_j\in\mathrm{Div}_{\mathbb{R}}(X)$ determines an $\mathbb{R}$-line bundle $\mathcal{O}_X(D)=\sum^k_{j=1} a_j\mathcal{O}_X(D_j)$.

\subsection{Singular Hermitian metrics and positivity}

First, we define positivity for smooth Hermitian metrics on holomorphic line bundles.

\begin{definition}\label{def of k-positivity}
    Let $X$ be a complex manifold of dimension $n$. 
    \begin{itemize}
        \item [($a$)] Let $L=\sum^k_{j=1}a_jL_j$ be a $\mathbb{R}$-line bundle over $X$, where $a_1,\cdots,a_k\in\mathbb{R}$ and $L_1,\cdots,L_k$ are holomorphic line bundles.
        We say that $L$ is $k$-$\it{positive}$ if there exist smooth Hermitian metrics $h_1,\cdots,h_k$ on $L_1,\cdots,L_k$ such that the curvature of the induced metric on $L$, which is explicity given by
        \begin{align*}
            i\Theta_{L,h}=i\sum^k_{j=1}a_j\Theta_{L_j,h_j},
        \end{align*}
        is $k$-positive, i.e. $i\Theta_{L,h}$ is semi-positive and has at least $n-k+1$ positive eigenvalues at every point of $X$.
        \item [($b$)] An $\mathbb{R}$-divisor $D=\sum^k_{j=1}a_jD_j$ is said to be $k$-$\it{positive}$ if the induced $\mathbb{R}$-line bundle $\mathcal{O}_X(D)=\sum^k_{j=1}a_j\mathcal{O}_X(D_j)$ is $k$-positive.
    \end{itemize}
\end{definition}

We introduce the definition of singular Hermitian metrics on line bundles and its positivity. 

\begin{definition}$(\mathrm{cf.~[Dem93],\,[Dem10,\,Chapter\,3]})$
    A $\it{singular~Hermitian~metric}$ $h$ on a line bundle $L$ is a metric which is given in any trivialization $\tau:L|_U\xrightarrow{\simeq} U\times\mathbb{C}$ by 
    \begin{align*}
        ||\xi||_h=|\tau(\xi)|e^{-\varphi}, \qquad x\in U,\,\,\xi\in L_x
    \end{align*}
    where $\varphi\in\mathcal{L}^1_{loc}(U)$ is an arbitrary function, called the weight of the metric with respect to the trivialization $\tau$.
\end{definition}

\begin{definition}\label{def of psef & big}$(\mathrm{cf.~[Wat22b,\,Definition\,3.2]})$
    Let $L$ be a holomorphic line bundle on a complex manifold $X$ equipped with a singular Hermitian metric $h$. 
    \begin{itemize}
        \item [($a$)] $h$ is $\it{singular}$ $\it{semi}$-$\it{positive}$ if $i\Theta_{L,h}\geq0$ in the sense of currents,
        i.e. the weight of $h$ with respect to any trivialization coincides with some plurisubharmonic function almost everywhere.
        \item [($b$)] $h$ is $\it{singular}$ $\it{positive}$ if the weight of $h$ with respect to any trivialization coincides with some strictly plurisubharmonic function almost everywhere.
    \end{itemize}
\end{definition}

Note that, singular semi-positivity is coincides with pseudo-effective on compact complex manifolds. 
Furthermore, singular positivity also coincide with big on compact \kah manifolds by Demailly's definition and characterization (see \cite{Dem93}, [Dem10,\,Chapter\,6]). 

We define singular Hermitian metrics on vector bundles and its positivity such as Griffiths and (dual) Nakano.

\begin{definition}$(\mathrm{cf.~[BP08,~Section~3],~[Rau15,~Definition~1.1]~and~[PT18,~Definition}$ $\mathrm{2.2.1]})$ 
    We say that $h$ is a $\it{singular~Hermitian~metric}$ on $E$ if $h$ is a measurable map from the base manifold $X$ to the space of non-negative Hermitian forms on the fibers satisfying $0<\mathrm{det}\,h<+\infty$ almost everywhere.
\end{definition}

\begin{definition}\label{def Griffiths semi-posi sing}$(\mathrm{cf.~[BP08,~Definition~3.1],~[Rau15,~Definition~1.2]~and~[PT18,~Def}$- 
    $\mathrm{inition~2.2.2]})$
    We say that a singular Hermitian metric $h$ is 
    \begin{itemize}
        \item [(1)] $\textit{Griffiths semi-negative}$ if $||u||_h$ is plurisubharmonic for any local holomorphic section $u\in\mathcal{O}(E)$ of $E$.
        \item [(2)] $\textit{Griffiths semi-positive}$ if the dual metric $h^*$ on $E^*$ is Griffiths semi-negative.
    \end{itemize}
\end{definition}

Let $h$ be a smooth Hermitian metric on $E$ and $u=(u_1,\cdots,u_n)$ be an $n$-tuple of holomorphic sections of $E$. We define $T^h_u$, an $(n-1,n-1)$-form through
\begin{align*}
    T^h_u=\sum^n_{j,k=1}(u_j,u_k)_h\widehat{dz_j\wedge d\overline{z}_k}
\end{align*}
where $(z_1,\cdots,z_n)$ are local coordinates on $X$, and $\widehat{dz_j\wedge d\overline{z}_k}$ denotes the wedge product of all $dz_i$ and $d\overline{z}_i$ expect $dz_j$ and $d\overline{z}_k$, 
multiplied by a constant of absolute value $1$, chosen so that $T_u^h$ is a positive form. Then a short computation yields that $(E,h)$ is Nakano semi-negative if and only if $T^h_u$ is plurisubharmonic in the sense that $\idd T^h_u\geq0$ (see \cite{Ber09},\,\cite{Rau15}).
In the case of $u_j=u_k=u$, $(E,h)$ is Griffiths semi-negative.

Let $h$ be a singular Hermitian metric of $E$. For any $n$-tuple of local holomorphic sections $u=(u_1,\cdots,u_n)$, we say that the $(n-1,n-1)$-form $T^h_u$ is plurisubharmonic if $\idd T^h_u\geq0$ in the sense of currents.


\begin{definition}\label{def Nakano semi-negative as Raufi}$\mathrm{(cf.\,[Rau15,\,Section\,1]})$
    We say that a singular Hermitian metric $h$ on $E$ is $\it{Nakano}$ $\it{semi}$-$\it{negative}$ if the $(n-1,n-1)$-form $T^h_u$ is plurisubharmonic for any $n$-tuple of holomorphic sections $u=(u_1,\cdots,u_n)$.
\end{definition}

\begin{definition}\label{def dual Nakano semi-posi sing}$\mathrm{(cf.\,[Wat22a,\,Definition\,4.5]})$
    We say that a singular Hermitian metric $h$ on $E$ is $\it{dual}$ $\it{Nakano}$ $\it{semi}$-$\it{positive}$ if the dual metric $h^*$ on $E^*$ is Nakano semi-negative.
\end{definition}

We already know the following definition of Nakano semi-positivity using $L^2$-estimates in \cite{Wat22b} as follows, which is based on the $L^2$-estimate condition (see \cite{HI20}, \cite{DNWZ20}, \cite{Ina22}, \cite{Wat22b}) and is equivalent to the usual definition for the smooth case.

\begin{definition}\label{def Nakano semi-posi sing}$(\mathrm{cf.~[Wat22b,\,Definition\,3.9]})$
    Assume that $h$ is a Griffiths semi-positive singular Hermitian metric. We say that $h$ is 
    $L^2$-$\it{type}$ $\it{Nakano}$ $\it{semi}$-$\it{positive}$ if for any positive integer $k\in\{1,\cdots,n\}$, any Stein coordinate $S$, any \kah metric $\omega_S$ on $S$ and
    any smooth Hermitian metric $h_F$ on any holomorphic vector bundle $F$ such that $A^{n,s}_{F,h_F,\omega_S}:=[i\Theta_{F,h_F},\Lambda_{\omega_S}]>0$ on $\Lambda^{n,s}T^*_S\otimes F$ for $s\geq k$,
    we have that any positive integer $q\geq k$ and any $\overline{\partial}$-closed $f\in L^2_{n,q}(S,E\otimes F,h\otimes h_F,\omega_S)$ 
    there exists $u\in L^2_{n,q-1}(S,E\otimes F,h\otimes h_F,\omega_S)$ satisfying $\overline{\partial}u=f$ and 
    \begin{align*}
        \int_S|u|^2_{h\otimes h_F,\omega_S}dV_{\omega_S}\leq\int_S\langle B^{-1}_{h_F,\omega_S}f,f\rangle_{h\otimes h_F,\omega_S}dV_{\omega_S},
    \end{align*}
    where $B_{h_F,\omega_S}=[i\Theta_{F,h_F}\otimes\mathrm{id}_E,\Lambda_{\omega_S}]$. Here we assume that the right-hand side is finite.
\end{definition}

Here, for singular Hermitian metrics we cannot always define the curvature currents with measure coefficients (see \cite{Rau15}). 
The above definitions \ref{def Griffiths semi-posi sing}-\ref{def Nakano semi-posi sing} dose not require the use of curvature currents.

For singular Hermitian metrics $h$ on holomorphic vector bundles $E$, we introduce the $L^2$-subsheaf $\mathscr{E}(h)$ of $\mathcal{O}(E)$ with respect to $h$ that is analogous to the multiplier ideal sheaf.
In fact, $\mathscr{E}(h)=\mathcal{O}(E)\otimes\mathscr{I}(h)$ if $E$ is a holomorphic line bundle.

\begin{definition}\label{def of L2-subsheaf}$(\mathrm{cf.\,[deC98,\,Definition\,2.3.1]})$
    Let $h$ be a singular Hermitian metric on $E$. We define the $L^2$-subsheaf $\mathscr{E}(h)$ of germs of local holomorphic sections of $E$ as follows:
    \begin{align*}
        \mathscr{E}(h)_x:=\{s_x\in\mathcal{O}(E)_x\mid|s_x|^2_h~ \mathrm{is ~locally ~integrable ~around~} x\}.
    \end{align*}
\end{definition}

In \cite{Nad89}, Nadel proved that $\mathscr{I}(h)$ is coherent by using the H\"ormander $L^2$-estimate. 
After that, Hosono and Inayama proved that $\mathscr{E}(h)$ is coherent if $h$ has Nakano semi-positivity
and that $\mathscr{E}(h\otimes\mathrm{det}\,h)$ is coherent if $h$ is Griffiths semi-positive in \cite{HI20} and \cite{Ina22}.
Finally we introduce the following definition of strictly positivity for Griffiths and (dual) Nakano. 

\begin{definition}\label{def sing strictly positive of Grif and (dual) Nakano}$\mathrm{(cf.\,[Ina22,\,Definition\,2.16],\,[Wat22a,\,Definition\,4.11])}$ 
    Let $(X,\omega)$ be a \kah manifold and $h$ be a singular Hermitian metric on $E$.
    \begin{itemize}
        \item We say that $h$ is $\textit{strictly Griffiths}$ $\delta_\omega$-$\textit{positive}$ if for any open subset $U$ and any \kah potential $\varphi$ of $\omega$ on $U$, $he^{\delta\varphi}$ is Griffiths semi-positive on $U$.
        \item We say that $h$ is $L^2$-$\it{type}$ $\textit{strictly Nakano}$ $\delta_\omega$-$\textit{positive}$ if for any open subset $U$ and any \kah potential $\varphi$ of $\omega$ on $U$, $he^{\delta\varphi}$ is $L^2$-type Nakano semi-positive on $U$ in the sense of Definition \ref{def Nakano semi-posi sing}.
        \item We say that $h$ is $\textit{strictly dual Nakano}$ $\delta_\omega$-$\textit{positive}$ if for any open subset $U$ and any \kah potential $\varphi$ of $\omega$ on $U$, $he^{\delta\varphi}$ is dual Nakano semi-positive on $U$.
    \end{itemize}
\end{definition}

The relationship is already known for strictly positivity of Griffiths and Nakano. 
By using [Wat22b,\,Theorem\,1.5], we can immediately obtain the following relationship for dual Nakano as well.

\begin{theorem}\label{h s-Gri then h * det h s-Nak}$\mathrm{(cf.\,[Ina22,\,Theorem\,3.6])}$
    Let $\omega$ be a \kah metric on a \kah manifold $X$ and $E$ be a holomorphic vector bundle of $\mathrm{rank}\,r$ over $X$ equipped with a singular Hermitian metric $h$. 
    If $h$ is strictly Griffiths $\delta_\omega$-positive then the singular Hermitian metric $h\otimes\mathrm{det}\,h$ on $E\otimes\mathrm{det}\,E$ is $L^2$-type strictly Nakano $(r+1)\delta_\omega$-positive.
\end{theorem}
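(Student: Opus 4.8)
The plan is to deduce the statement from the singular-metric Demailly--Skoda implication recorded in [Wat22b,\,Theorem\,1.5], which upgrades Griffiths semi-positivity of a singular Hermitian metric to $L^2$-type Nakano semi-positivity of its tensor product with the determinant, together with a one-line identity for twisting by a \kah potential.

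Since every positivity notion entering Definition \ref{def sing strictly positive of Grif and (dual) Nakano} and Definition \ref{def Nakano semi-posi sing} is local, I would fix an arbitrary open subset $U\subseteq X$ and an arbitrary \kah potential $\varphi$ of $\omega$ on $U$, and reduce to showing that $(h\otimes\mathrm{det}\,h)e^{(r+1)\delta\varphi}$ is $L^2$-type Nakano semi-positive on $U$. Put $\widetilde h:=he^{\delta\varphi}$. By the hypothesis that $h$ is strictly Griffiths $\delta_\omega$-positive, $\widetilde h$ is a Griffiths semi-positive singular Hermitian metric on $E|_U$. The key algebraic observation is that, for a bundle of rank $r$, $\mathrm{det}(he^{\delta\varphi})=(\mathrm{det}\,h)e^{r\delta\varphi}$, whence
\[
\widetilde h\otimes\mathrm{det}\,\widetilde h=(he^{\delta\varphi})\otimes\big((\mathrm{det}\,h)e^{r\delta\varphi}\big)=(h\otimes\mathrm{det}\,h)\,e^{(r+1)\delta\varphi}.
\]
Now I would apply [Wat22b,\,Theorem\,1.5] to the Griffiths semi-positive metric $\widetilde h$ on $E|_U$: this yields that $\widetilde h\otimes\mathrm{det}\,\widetilde h$ is $L^2$-type Nakano semi-positive on $U$ in the sense of Definition \ref{def Nakano semi-posi sing} (in particular it is Griffiths semi-positive and satisfies $0<\mathrm{det}(\,\cdot\,)<+\infty$ almost everywhere, so the standing hypotheses of that definition are in force). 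By the displayed identity this is exactly the assertion that $(h\otimes\mathrm{det}\,h)e^{(r+1)\delta\varphi}$ is $L^2$-type Nakano semi-positive on $U$. As $U$ and $\varphi$ were arbitrary, $h\otimes\mathrm{det}\,h$ is $L^2$-type strictly Nakano $(r+1)\delta_\omega$-positive, as claimed.

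The substantive point --- essentially the only place requiring care --- is matching the hypotheses of [Wat22b,\,Theorem\,1.5] to the present situation: that result is stated for a Griffiths semi-positive singular Hermitian metric on a holomorphic vector bundle over a (possibly non-compact) complex manifold, and the local potential $\varphi$ is defined only on $U$, so one must work on $U$ throughout and invoke the locality of all the relevant definitions. The appearance of the constant $r+1$ is then forced by the exponents $1$ (coming from $E$) and $r$ (coming from $\mathrm{det}\,E$) in the identity above, i.e. $\delta\varphi+r\delta\varphi=(r+1)\delta\varphi$; no $L^2$-estimate is lost in this step, which is why the implication is ``immediate'' once [Wat22b,\,Theorem\,1.5] is available.
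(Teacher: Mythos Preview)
Your argument is correct and is the natural one; the paper itself does not prove this statement but cites it as [Ina22,\,Theorem\,3.6], and your write-up is exactly how that result is obtained from the underlying Demailly--Skoda type implication. One minor citation slip: the implication ``$h$ Griffiths semi-positive $\Rightarrow$ $h\otimes\mathrm{det}\,h$ is $L^2$-type Nakano semi-positive'' is attributed in the paper to [Ina22,\,Theorem\,1.3], whereas [Wat22b,\,Theorem\,1.5] is the \emph{dual} Nakano analogue used for Theorem~\ref{h s-Gri then h * det h s-dual Nak}; swap the reference and the proof stands.
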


\begin{theorem}\label{h s-Gri then h * det h s-dual Nak}
    Let $\omega$ be a \kah metric on a \kah manifold $X$ and $E$ be a holomorphic vector bundle of $\mathrm{rank}\,r$ over $X$ equipped with a singular Hermitian metric $h$. 
    If $h$ is strictly Griffiths $\delta_\omega$-positive then the singular Hermitian metric $h\otimes\mathrm{det}\,h$ on $E\otimes\mathrm{det}\,E$ is strictly dual Nakano $(r+1)\delta_\omega$-positive.
\end{theorem}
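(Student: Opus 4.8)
The plan is to reduce this ``strictly positive'' statement to a ``semi-positive'' one via the standard K\"ahler-potential trick (exactly as in the proof of Theorem \ref{h s-Gri then h * det h s-Nak}), and then, instead of the $L^2$-estimate argument used there for Nakano semi-positivity, to deduce the dual Nakano semi-positivity of $g\otimes\mathrm{det}\,g$ from the Griffiths semi-positivity of $g$ by passing to dual bundles and invoking the singular Demailly--Skoda theorem $[\mathrm{Wat22b},\,\mathrm{Theorem}\,1.5]$.

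First I would fix an arbitrary open subset $U\subseteq X$ and an arbitrary (smooth) K\"ahler potential $\varphi$ of $\omega$ on $U$, and set $g:=he^{\delta\varphi}$, a singular Hermitian metric on $E|_U$; by the hypothesis that $h$ is strictly Griffiths $\delta_\omega$-positive, $g$ is Griffiths semi-positive on $U$. The key bookkeeping identity is
\begin{align*}
    (h\otimes\mathrm{det}\,h)\,e^{(r+1)\delta\varphi}=g\otimes\mathrm{det}\,g \qquad\text{on}\ U,
\end{align*}
which holds because $\mathrm{det}(he^{\delta\varphi})=(\mathrm{det}\,h)\,e^{r\delta\varphi}$ as metrics on the line bundle $\mathrm{det}\,E$. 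Hence, unwinding the third bullet of Definition \ref{def sing strictly positive of Grif and (dual) Nakano}, it suffices to prove the following semi-positive statement: if $g$ is a Griffiths semi-positive singular Hermitian metric on a holomorphic vector bundle $E$ of $\mathrm{rank}\,r$ over $U$, then $g\otimes\mathrm{det}\,g$ is dual Nakano semi-positive on $U$.

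To prove this, I would pass to dual bundles. By Definition \ref{def Griffiths semi-posi sing}, the dual metric $g^*$ on $E^*$ is Griffiths semi-negative. Applying $[\mathrm{Wat22b},\,\mathrm{Theorem}\,1.5]$, the singular analogue of the Demailly--Skoda theorem, the induced metric $g^*\otimes\mathrm{det}\,g^*$ on $E^*\otimes\mathrm{det}\,E^*$ is Nakano semi-negative in the sense of Definition \ref{def Nakano semi-negative as Raufi}. Using the canonical isomorphisms $(\mathrm{det}\,E)^*\cong\mathrm{det}\,E^*$ and $(E\otimes\mathrm{det}\,E)^*\cong E^*\otimes\mathrm{det}\,E^*$ together with the compatibility $(h\otimes\mathrm{det}\,h)^*=h^*\otimes\mathrm{det}(h^*)$ of dual metrics, one identifies $g^*\otimes\mathrm{det}\,g^*$ with the dual metric of $g\otimes\mathrm{det}\,g$. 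Thus $(g\otimes\mathrm{det}\,g)^*$ is Nakano semi-negative, i.e. $g\otimes\mathrm{det}\,g$ is dual Nakano semi-positive by Definition \ref{def dual Nakano semi-posi sing}; in view of the reduction above, this shows that $h\otimes\mathrm{det}\,h$ is strictly dual Nakano $(r+1)\delta_\omega$-positive, as desired.

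The only nontrivial ingredient is $[\mathrm{Wat22b},\,\mathrm{Theorem}\,1.5]$, the statement that Griffiths semi-negativity of a singular Hermitian metric forces Nakano semi-negativity of its twist by the determinant in Raufi's $T^h_u$ formulation (so that no curvature currents are needed, which is what makes the subsequent dualization legitimate); granting it, the remaining steps are the potential trick and purely formal manipulations with determinants and dual metrics. One should only take care with the minor point that the local K\"ahler potentials can be chosen smooth, so that $e^{\delta\varphi}$ is a smooth positive function and $g$, $g\otimes\mathrm{det}\,g$ are genuine singular Hermitian metrics (measurable, with finite nonzero determinant almost everywhere).
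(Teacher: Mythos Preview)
Your proof is correct and follows exactly the approach the paper indicates: the paper does not give a detailed argument but simply says the result is obtained ``immediately'' from $[\mathrm{Wat22b},\,\mathrm{Theorem}\,1.5]$, analogously to Theorem \ref{h s-Gri then h * det h s-Nak}, and that is precisely your reduction via the K\"ahler-potential identity $(h\otimes\mathrm{det}\,h)e^{(r+1)\delta\varphi}=g\otimes\mathrm{det}\,g$ followed by the invocation of $[\mathrm{Wat22b},\,\mathrm{Theorem}\,1.5]$. One small streamlining: as cited in the paper, $[\mathrm{Wat22b},\,\mathrm{Theorem}\,1.5]$ is already stated in the form ``$g$ Griffiths semi-positive $\Rightarrow$ $g\otimes\mathrm{det}\,g$ dual Nakano semi-positive'', so you may apply it directly to $g$ without the detour through $g^*$ and Nakano semi-negativity (your detour is of course equivalent, just unpacking Definition \ref{def dual Nakano semi-posi sing}).
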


\subsection{$L^2$-estimates and $L^2$-type Dolbeault complexes}

Here, a function $\psi:X\to[-\infty,+\infty)$ is said to be $\it{exhaustive}$ if all sublevel sets $X_c:=\{z\in X\mid \psi(x)<c\},\,c<\sup_X\psi$, are relatively compact.
A complex manifold is said to be $\it{weakly}$ $\it{pseudoconvex}$ if there exists a smooth exhaustive plurisubharmonic function.

The following $L^2$-estimates equipped with a singular Hermitian metric is used to prove logarithmic vanishing theorems.

\begin{theorem}\label{L2-estimate of k-posi + psef}$(\mathrm{cf.~[Wat22b,\,Corollary\,4.4]})$
    Let $X$ be a compact \kah manifold, $D$ be a simple normal crossing divisor on $X$ and $Y:=X\setminus D$.
    Let $\omega_Y$ be a \kah metric on $Y$, $A$ be a holomorphic line bundle and $L$ be a holomorphic line bundle equipped with a singular semi-positive Hermitian metric $h$, i.e. $i\Theta_{L,h}\geq0$ in the sense of currents.
    If there exists a smooth Hermitian metric $h_A$ on $A$ over $Y$ such that the curvature operator $A^{p,q}_{A,h_A,\omega_Y}:=[i\Theta_{A,h_A},\Lambda_{\omega_Y}]$ on $\Lambda^{p,q}T^*_X\otimes A$ is positive definite for $p+q\geq n+k$, for example $h_A$ is $k$-positive, then we have the following
    \begin{itemize}
        \item [($a$)] For any $q\geq k$ and any $f\in L^2_{n,q}(Y,A\otimes L,h_A\otimes h,\omega_Y)$ satisfying $\overline{\partial}f=0$ and 
        $\int_Y\langle B^{-1}_{h_A,\omega_Y}f,f\rangle_{h_A\otimes h,\omega_Y}dV_{\omega_Y}<+\infty$,
        there exists $u\in L^2_{n,q-1}(Y,A\otimes L,h_A\otimes h,\omega_Y)$ such that $\overline{\partial}u=f$ and 
        \begin{align*}
            \int_Y|u|^2_{h_A\otimes h,\omega_Y}dV_{\omega_Y}\leq\int_Y\langle B^{-1}_{h_A,\omega_Y}f,f\rangle_{h_A\otimes h,\omega_Y}dV_{\omega_Y},
        \end{align*}
        \item [($b$)] For any $p\geq k$ and any $f\in L^2_{p,n}(Y,A\otimes L,h_A\otimes h,\omega_Y)$ satisfying $\overline{\partial}f=0$ and 
        $\int_Y\langle B^{-1}_{h_A,\omega_Y}f,f\rangle_{h_A\otimes h,\omega_Y}dV_{\omega_Y}<+\infty$,
        there exists $u\in L^2_{p,n-1}(Y,A\otimes L,h_A\otimes h,\omega_Y)$ such that $\overline{\partial}u=f$ and 
        \begin{align*}
            \int_Y|u|^2_{h_A\otimes h,\omega_Y}dV_{\omega_Y}\leq\int_Y\langle B^{-1}_{h_A,\omega_Y}f,f\rangle_{h_A\otimes h,\omega_Y}dV_{\omega_Y},
        \end{align*}
    \end{itemize}
    where $B_{h_A,\omega_Y}=[i\Theta_{A,h_A}\otimes\mathrm{id}_L,\Lambda_{\omega_Y}]$.
\end{theorem}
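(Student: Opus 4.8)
The plan is to realize the statement as a twisted $L^2$-existence theorem for $\overline{\partial}$ on the complete \kah manifold $(Y,\omega_Y)$, in which the strict positivity is supplied entirely by the smooth factor $A$ while the singular semi-positive factor $L$ contributes only a non-negative curvature term that is discarded. I would prove $(a)$ for $(n,q)$-forms and deduce $(b)$ for $(p,n)$-forms by the conjugate-symmetric argument; note that the hypothesis $A^{p,q}_{A,h_A,\omega_Y}>0$ for $p+q\geq n+k$ specializes to $q\geq k$ in case $(a)$ (here $p=n$) and to $p\geq k$ in case $(b)$ (here $q=n$), so the two cases are genuinely parallel.

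First I would treat the case where $h$ is smooth with $i\Theta_{L,h}\geq0$. Since $\omega_Y$ is complete, smooth compactly supported forms are dense in $\mathrm{Dom}(\overline{\partial})\cap\mathrm{Dom}(\overline{\partial}^*)$ for the graph norm, so it suffices to establish the a priori estimate on such forms. The Bochner--Kodaira--Nakano identity gives, for $v\in C^\infty_c(Y,\Lambda^{n,q}T^*_X\otimes A\otimes L)$,
\[
\|\overline{\partial}v\|^2+\|\overline{\partial}^*v\|^2\geq\langle[i\Theta_{A\otimes L,h_A\otimes h},\Lambda_{\omega_Y}]v,v\rangle .
\]
Writing $i\Theta_{A\otimes L}=i\Theta_{A,h_A}+i\Theta_{L,h}$ and using the algebraic fact that a semi-positive $i\Theta_{L,h}\geq0$ forces $[i\Theta_{L,h},\Lambda_{\omega_Y}]\geq0$ on forms of maximal holomorphic (resp. anti-holomorphic) degree, the curvature operator is bounded below by $B_{h_A,\omega_Y}=[i\Theta_{A,h_A},\Lambda_{\omega_Y}]>0$. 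The standard H\"ormander--Demailly duality then applies: for $\overline{\partial}$-closed $f$ with $\int_Y\langle B^{-1}_{h_A,\omega_Y}f,f\rangle<+\infty$, Cauchy--Schwarz in the weight $B_{h_A,\omega_Y}$ bounds the functional $\overline{\partial}^*v\mapsto\langle f,v\rangle$ on $\ker\overline{\partial}$, and Hahn--Banach together with the Riesz representation produce $u$ with $\overline{\partial}u=f$ and $\|u\|^2\leq\int_Y\langle B^{-1}_{h_A,\omega_Y}f,f\rangle$.

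To pass to a singular pseudo-effective $h=e^{-\varphi}$, I would regularize the plurisubharmonic weight on the compact \kah $X$ by a decreasing sequence $\varphi_\nu\searrow\varphi$ of smooth weights with $i\Theta_{L,h_\nu}\geq-\varepsilon_\nu\omega_0$, where $\omega_0$ is a fixed smooth \kah metric on $X$ and $\varepsilon_\nu\to0$. For each $\nu$ the smooth estimate yields $u_\nu$ with $\overline{\partial}u_\nu=f$; since $\varphi_\nu\geq\varphi$ one has $|\cdot|^2_{h_\nu}\leq|\cdot|^2_h$, so the right-hand sides stay dominated by the given finite quantity and the $u_\nu$ are uniformly bounded. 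Extracting a weak $L^2$-limit $u$ and using that $\overline{\partial}$ is weakly closed gives $\overline{\partial}u=f$, while the monotone convergence $|\cdot|^2_{h_\nu}\nearrow|\cdot|^2_h$ upgrades the bound to $\|u\|^2_h\leq\int_Y\langle B^{-1}_{h_A,\omega_Y}f,f\rangle_h$; membership of $u$ in the $L^2$-space attached to $h$ is exactly what encodes the twist by $\mathscr{I}(h)$.

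The hard part will be the regularization loss $-\varepsilon_\nu\omega_0$. For fixed $\nu$ the effective comparison operator is $B_{h_A,\omega_Y}-\varepsilon_\nu[\omega_0,\Lambda_{\omega_Y}]$, which must stay positive definite on all of $Y$ to run the smooth estimate; but near $D$ both $B_{h_A,\omega_Y}$ and $[\omega_0,\Lambda_{\omega_Y}]$ degenerate in the Poincar\'e directions, so no constant $\varepsilon_\nu$ works uniformly. The crux is therefore a precise comparison of the asymptotics of these two contraction operators along $D$ dictated by the Poincar\'e metric, carried out on the relatively compact sublevel sets of an exhaustion of $Y$ with constants controlled so that $\nu\to\infty$ recovers the clean operator $B_{h_A,\omega_Y}$ in the limit. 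This is precisely the analytic content of the cited $L^2$-estimate [Wat22b,\,Corollary\,4.4], to which the remaining bookkeeping reduces.
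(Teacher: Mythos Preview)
Your outline is correct and matches the paper's own justification, which appears only as a brief Remark immediately after the statement: $Y$ carries a Poincar\'e-type complete \kah metric, Demailly's regularization of $h$ is carried out on the compact ambient $X$, and the rest is the proof of [Wat22b,\,Corollary\,4.4]. Your ``hard part'' is milder than you fear: for a Poincar\'e-type $\omega_Y$ and a smooth $\omega_0$ on $X$ one has $\omega_0\leq C\omega_Y$, so $[\omega_0,\Lambda_{\omega_Y}]$ is uniformly bounded on $(n,q)$- and $(p,n)$-forms and the loss $-\varepsilon_\nu\omega_0$ is absorbed once $\varepsilon_\nu$ is small relative to any uniform lower bound on $B_{h_A,\omega_Y}$ (which the paper explicitly secures in every application, e.g.\ $B\geq\tfrac12$ in the proof of Theorem~\ref{Log V-thm of k-posi + psef in 4}).
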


\begin{remark}\label{Rem for L-estimate k-posi + psef}
    Theorem \ref{L2-estimate of k-posi + psef} is similarly shown by almost the same proof as $\mathrm{[Wat22b},$ $\mathrm{Corollary\,4.4]}$, although it is unclear whether $Y$ is weakly pseudoconvex. 
    Indeed, $Y$ has a (Poincar\'e type) complete \kah metric, and we can use Demailly's approximation to $h$ on $X$ due to the compactness of $X$.
\end{remark}

Here, $h\otimes\mathrm{det}\,h$ is $L^2$-type Naknao semi-positive and dual Nakano positive if $h$ is Griffiths semi-positive (see [Ina22,\,Theorem\,1.3], [Wat22b,\,Theorem\,1.5])

\begin{theorem}\label{L2-estimate of k-posi + Grif}$(\mathrm{cf.~[Wat22b,\,Theorem\,4.8\,and\,4.9]})$
    Let $X$ be a weakly pseudoconvex \kah manifold equipped with a effective divisor $D$ such that $\mathcal{O}_X(D)$ is positive line bundle and $\omega$ be a \kah metric.
    Let $(A,h_A)$ be a $k$-positive line bundle and $E$ be a holomorphic vector bundle equipped with a singular Hermitian metric $h$.
    We assume that $h$ is Griffiths semi-positive on $X$. Then we have the following
    \begin{itemize}
        \item [($a$)] For any $q\geq k$ and any $f\in L^2_{n,q}(X,A\otimes E\otimes\mathrm{det}\,E,h_A\otimes h\otimes\mathrm{det}\,h,\omega)$ satisfying $\overline{\partial}f=0$ and $\int_X\langle B^{-1}_{h_A,\omega}f,f\rangle_{h_A\otimes h\otimes\mathrm{det}\,h,\omega}dV_\omega<+\infty$,
        there exists $u\in L^2_{n,q-1}(X,A\otimes E\otimes\mathrm{det}\,E,h_A\otimes h\otimes\mathrm{det}\,h,\omega)$ satisfies $\overline{\partial}u=f$ and 
        \begin{align*}
            \int_X|u|^2_{h_A\otimes h\otimes\mathrm{det}\,h,\omega}dV_{\omega}\leq\int_X\langle B^{-1}_{h_A,\omega}f,f\rangle_{h_A\otimes h\otimes\mathrm{det}\,h,\omega}dV_\omega,
        \end{align*}
        \item [($a$)] For any $p\geq k$ and any $f\in L^2_{p,n}(X,A\otimes E\otimes\mathrm{det}\,E,h_A\otimes h\otimes\mathrm{det}\,h,\omega)$ satisfying $\overline{\partial}f=0$ and $\int_X\langle B^{-1}_{h_A,\omega}f,f\rangle_{h_A\otimes h\otimes\mathrm{det}\,h,\omega}dV_\omega<+\infty$,
        there exists $u\in L^2_{p,n-1}(X,A\otimes E\otimes\mathrm{det}\,E,h_A\otimes h\otimes\mathrm{det}\,h,\omega)$ satisfies $\overline{\partial}u=f$ and 
        \begin{align*}
            \int_X|u|^2_{h_A\otimes h\otimes\mathrm{det}\,h,\omega}dV_{\omega}\leq\int_X\langle B^{-1}_{h_A,\omega}f,f\rangle_{h_A\otimes h\otimes\mathrm{det}\,h,\omega}dV_\omega,
        \end{align*}
    \end{itemize}
    where $B_{h_A,\omega}=[i\Theta_{A,h_A}\otimes\mathrm{id}_{E\otimes\mathrm{det}\,E},\Lambda_\omega]$.
\end{theorem}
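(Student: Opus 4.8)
The plan is to reduce the statement to a single H\"ormander-type $\overline{\partial}$-estimate in which all positivity is carried by the $k$-positive twist $(A,h_A)$, the bundle $E$ contributing only a nonnegative curvature term after passing to $E\otimes\mathrm{det}\,E$. The structural input I would use is that, since $h$ is Griffiths semi-positive, the induced singular Hermitian metric $h\otimes\mathrm{det}\,h$ on $E\otimes\mathrm{det}\,E$ is $L^2$-type Nakano semi-positive in the sense of Definition \ref{def Nakano semi-posi sing} and dual Nakano semi-positive in the sense of Definition \ref{def dual Nakano semi-posi sing}; these are the results of \cite{Ina22} and \cite{Wat22b} (the semi-positive analogues of Theorems \ref{h s-Gri then h * det h s-Nak} and \ref{h s-Gri then h * det h s-dual Nak}), and for smooth $h$ they are the classical Demailly--Skoda statements. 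Granting this, part ($a$) is formally the problem of solving $\overline{\partial}$ for $(n,q)$-forms with values in a bundle twisted by a $k$-positive line bundle, and part ($b$) the corresponding problem in bidegree $(p,n)$.

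For part ($a$) I would work on the weakly pseudoconvex K\"ahler manifold $X$: fix a smooth plurisubharmonic exhaustion, exhaust $X$ by its relatively compact sublevel sets, and pass to $X\setminus D$, which carries a \emph{complete} K\"ahler metric because $\mathcal{O}_X(D)$ is positive and on which $L^2$ $\overline{\partial}$-cohomology coincides with that on $X$. On $(n,q)$-forms with values in $A\otimes E\otimes\mathrm{det}\,E$ and metric $h_A\otimes h\otimes\mathrm{det}\,h$, the Bochner--Kodaira--Nakano inequality shows that the relevant curvature operator dominates $B_{h_A,\omega}=[i\Theta_{A,h_A}\otimes\mathrm{id},\Lambda_\omega]$, since the $E\otimes\mathrm{det}\,E$-part is $\geq0$ by Nakano semi-positivity while $[i\Theta_{A,h_A},\Lambda_\omega]>0$ on $\Lambda^{n,q}T^*_X\otimes A$ for $q\geq k$ by $k$-positivity of $(A,h_A)$. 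For a $\overline{\partial}$-closed $f$ with $\int_X\langle B^{-1}_{h_A,\omega}f,f\rangle\,dV_\omega<+\infty$ this yields $u$ with $\overline{\partial}u=f$ and $\int_X|u|^2_{h_A\otimes h\otimes\mathrm{det}\,h,\omega}\,dV_\omega\leq\int_X\langle B^{-1}_{h_A,\omega}f,f\rangle\,dV_\omega$; the passage back from the sublevel sets and from an auxiliary complete metric $\geq\omega$ to $(X,\omega)$ is Demailly's standard weak-limit argument, using the monotonicity of $\langle B^{-1}f,f\rangle\,dV_\omega$ under enlarging $\omega$ for $(n,q)$-forms. Part ($b$) is the same argument run in bidegree $(p,n)$, using the corresponding $L^2$-estimate for dual Nakano semi-positive metrics (which $h\otimes\mathrm{det}\,h$ is, by \cite{Wat22b}) together with positivity of $[i\Theta_{A,h_A},\Lambda_\omega]$ on $\Lambda^{p,n}T^*_X\otimes A$ for $p\geq k$.

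The hard part is that $h$, and hence $h\otimes\mathrm{det}\,h$, is only singular, so no curvature current need exist and the Bochner--Kodaira inequality cannot be invoked literally. Definition \ref{def Nakano semi-posi sing} is designed precisely to bypass this: the $L^2$-estimate is built into the notion, so I would apply it on the Stein pieces of the exhaustion---where, after tensoring with $\mathrm{det}$, $h$ is a decreasing limit of smooth Nakano semi-positive metrics---solve $\overline{\partial}$ against the smooth approximants there, and patch by a weak-limit argument (Banach--Alaoglu, with norms controlled by monotone convergence and Fatou's lemma). The effective divisor $D$ with $\mathcal{O}_X(D)$ positive enters through its defining section $s_D$: the weight $-\log|s_D|^2$ furnishes both the complete K\"ahler metric on $X\setminus D$ and the extra curvature room needed to globalize the local solutions. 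Carrying out all of the analytic details is exactly Theorems 4.8 and 4.9 of \cite{Wat22b}, to which the present statement reduces.
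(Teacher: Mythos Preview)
Your proposal is correct and aligns with the paper's treatment: the paper does not prove this theorem at all but simply quotes it from \cite{Wat22b} (Theorems 4.8 and 4.9), exactly as your final sentence indicates. Your sketch of the underlying argument---passing from Griffiths semi-positivity of $h$ to $L^2$-type Nakano and dual Nakano semi-positivity of $h\otimes\mathrm{det}\,h$, then running the H\"ormander estimate on a complete K\"ahler complement $X\setminus D$ with the $k$-positive twist $(A,h_A)$ carrying all the strict positivity---is the correct outline of what those cited theorems do, and is more than the present paper itself supplies.
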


For singular Hermitian metrics $h$ on $E$, we define the subsheaf $\mathscr{L}^{p,q}_{E,h}$ of germs of $(p,q)$-forms $u$ with values in $E$ and with measurable coefficients such that both $|u|^2_{h}$ and $|\overline{\partial}u|^2_h$ are locally integrable.

For an $L^2$-type Dolbeault complex with respect to the subsheaf $\mathscr{L}^{p,\ast}_{E,h}$, we already know the following $L^2$-type Dolbeault isomorphisms involving $L^2$-subsheaves.

\begin{theorem}\label{L2-type Dolbeault isomorphism with sHm}$(\mathrm{cf.~[Wat22b,\,Theorem\,5.4]})$
    Let $X$ be a complex manifold of dimension $n$ and $(F,h_F)$ be a holomorphic vector bundle. 
    Let $L$ be a holomorphic line bundle equipped with a singular Hermitian metric $h_L$ and $E$ be a holomorphic vector bundle equipped with a singular Hermitian metric $h_E$.
    Then we have the following
    \begin{itemize}
        \item [($a$)] If $h_L$ is singular semi-positive, then we have an exact sequence of sheaves 
        \begin{align*}
            0\longrightarrow \Omega_X^p\otimes \mathcal{O}_X(F\otimes L)\otimes \mathscr{I}(h_L) \longrightarrow \mathscr{L}^{p,\ast}_{F\otimes L,h_F\otimes h_L}.
        \end{align*}
        \item [($b$)] If $h_E$ is $L^2$-type Nakano semi-positive, then we get an exact sequence of sheaves 
        \begin{align*}
            0\longrightarrow K_X\otimes \mathcal{O}_X(F)\otimes \mathscr{E}(h_E) \longrightarrow \mathscr{L}^{n,\ast}_{F\otimes E,h_F\otimes h_E}.
        \end{align*}
        \item [($c$)] If $h_E$ is Griffiths semi-positive, then we have an exact sequence of sheaves 
        \begin{align*}
            0\longrightarrow \Omega_X^p\otimes \mathcal{O}_X(F)\otimes \mathscr{E}(h_E\otimes\mathrm{det}\,h_E) \longrightarrow \mathscr{L}^{p,\ast}_{F\otimes E\otimes\mathrm{det}\,E,h_F\otimes h_E\otimes\mathrm{det}\,h_E}.
        \end{align*}
    \end{itemize}
    In particular, $L^2$-type Dolbeault isomorphisms are obtained from these. For example, $H^q(X,\Omega_X^p\otimes F\otimes L\otimes \mathscr{I}(h_L))\cong H^q(\Gamma(X,\mathscr{L}^{p,\ast}_{F\otimes L,h_F\otimes h_L}))$ in the case of $(a)$.
\end{theorem}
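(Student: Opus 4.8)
The strategy is the classical one for $L^2$ Dolbeault theory: in each of the cases $(a)$, $(b)$, $(c)$ I would show that the complex $(\mathscr{L}^{p,\ast}_{\bullet,\bullet},\overline{\partial})$ is a resolution of the subsheaf on the left by sheaves acyclic for the global section functor, and then invoke the de~Rham--Weil isomorphism to identify its sheaf cohomology with the cohomology of the complex of global sections. Concretely, three things have to be verified: that the sheaves $\mathscr{L}^{p,q}_{E,h}$ are fine; that $\ker(\overline{\partial})$ in bidegree $(\ast,0)$ is exactly the indicated subsheaf; and that the complex is exact in positive bidegrees, i.e. a local $L^2$-type $\overline{\partial}$-Poincar\'e lemma holds.

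Fineness is immediate: each $\mathscr{L}^{p,q}_{E,h}$ is a module over the sheaf of germs of smooth functions, since for smooth $\rho$ and $u\in\mathscr{L}^{p,q}_{E,h}$ one has $|\rho u|^2_h=|\rho|^2|u|^2_h$ and $\overline{\partial}(\rho u)=\overline{\partial}\rho\wedge u+\rho\,\overline{\partial}u$, with all terms locally integrable against $h$; hence partitions of unity exist, these sheaves are soft, and in particular $\Gamma$-acyclic. For the kernel, a local section of $\ker\bigl(\overline{\partial}\colon\mathscr{L}^{p,0}_{F\otimes L,h_F\otimes h_L}\to\mathscr{L}^{p,1}_{F\otimes L,h_F\otimes h_L}\bigr)$ is a holomorphic $p$-form with values in $F\otimes L$ whose pointwise square-norm with respect to $h_F\otimes h_L$ is locally integrable; since $h_F$ is smooth, hence locally bounded above and below, this condition depends only on $h_L$, and writing $h_L=e^{-\varphi_L}$ in a trivialization it says precisely that the local holomorphic coefficients belong to $\mathscr{I}(h_L)$. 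This gives $\ker(\overline{\partial})=\Omega^p_X\otimes\mathcal{O}_X(F\otimes L)\otimes\mathscr{I}(h_L)$ in case $(a)$; the same computation against $\mathscr{E}(h_E)$, respectively $\mathscr{E}(h_E\otimes\mathrm{det}\,h_E)$, identifies the kernel in cases $(b)$ and $(c)$.

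The substantive point is local exactness. Fix $x\in X$ and a small coordinate ball $B\ni x$ trivializing $F$ (resp. $E$). Given a $\overline{\partial}$-closed $f\in\mathscr{L}^{p,q}_{\bullet,\bullet}(B)$ with $q\geq1$, I want to produce on a slightly smaller ball $B'\Subset B$ a form $u\in\mathscr{L}^{p,q-1}_{\bullet,\bullet}(B')$ with $\overline{\partial}u=f$; since then $|\overline{\partial}u|^2_h=|f|^2_h$ is locally integrable by hypothesis, it suffices to obtain $|u|^2_h$ locally integrable on $B'$. In case $(a)$, write $f=\sum_{|I|=p}dz_I\wedge f_I$ with $f_I$ of bidegree $(0,q)$ with values in $F\otimes L$; then $\overline{\partial}f=0$ is equivalent to $\overline{\partial}f_I=0$ for every $I$, and viewing each $f_I$ as an $(n,q)$-form with values in $F\otimes L$ (trivializing $K_X^{-1}$ on $B$) one solves $\overline{\partial}u_I=f_I$ on $B'$ by H\"ormander's $L^2$-estimate on the pseudoconvex ball $B'$ with the plurisubharmonic weight $\varphi_L+|z|^2$: the term $\idd|z|^2$ makes the curvature operator positive definite on $(n,q)$-forms for $q\geq1$, and $\int_{B'}|f_I|^2e^{-\varphi_L-|z|^2}\leq\int_{B'}|f_I|^2e^{-\varphi_L}<+\infty$, so the solution satisfies $\int_{B'}|u_I|^2_{h_L}<+\infty$ and $u:=\sum_I dz_I\wedge u_I$ lies in $\mathscr{L}^{p,q-1}_{F\otimes L,h_F\otimes h_L}(B')$. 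In case $(b)$, where the bidegree is already $(n,\ast)$, one applies Definition \ref{def Nakano semi-posi sing} directly with $S=B'$, $\omega_S$ the Euclidean metric, and $F$ taken trivial with the strictly plurisubharmonic weight $\lambda|z|^2$ for $\lambda\gg0$ so that $A^{n,s}_{F,h_F,\omega_S}>0$ for all $s\geq1$; this yields the desired solution with an $L^2$ bound. Case $(c)$ reduces to $(b)$: decompose $f=\sum_{|I|=p}dz_I\wedge f_I$ as above and solve for the $(0,q)$-components with values in $E\otimes\mathrm{det}\,E$, using that $h_E\otimes\mathrm{det}\,h_E$ is $L^2$-type Nakano semi-positive whenever $h_E$ is Griffiths semi-positive ([Ina22,\,Theorem\,1.3], [Wat22b,\,Theorem\,1.5]).

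Putting the three points together, in each case $(\mathscr{L}^{p,\ast}_{\bullet,\bullet},\overline{\partial})$ is a $\Gamma$-acyclic resolution of the stated subsheaf, so $H^q$ of the subsheaf is canonically isomorphic to the $q$-th cohomology of $\bigl(\Gamma(X,\mathscr{L}^{p,\ast}_{\bullet,\bullet}),\overline{\partial}\bigr)$, which is the asserted $L^2$-type Dolbeault isomorphism. I expect the only real difficulty to lie in the local exactness of cases $(b)$ and $(c)$: one must arrange the auxiliary bundle and its weight so that the positivity hypothesis built into Definition \ref{def Nakano semi-posi sing} is satisfied on the ball, and keep track of the bidegree reduction $(p,q)\rightsquigarrow(n,q)$ together with the trivializations of $\Lambda^{n-p}T_X$ and $K_X$; case $(a)$, by contrast, rests only on the standard H\"ormander estimate and is essentially formal.
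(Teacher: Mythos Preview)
This theorem is not proved in the paper; it is quoted from [Wat22b,\,Theorem\,5.4] as a preliminary result, so there is no proof here to compare against directly. Your sketch is correct and is precisely the template the paper follows when proving the logarithmic analogues in \S3 (Theorems~\ref{L2-type Log Dolbeault isomorphism with sHm line} and~\ref{L2-type Log Dolbeault isomorphism with sHm vector}): fineness from the $C^\infty$-module structure, identification of $\ker\overline{\partial}_0$ by direct inspection of the local $L^2$ condition, and local exactness at $q\geq1$ via an $L^2$ estimate on a small Stein set with an auxiliary strictly plurisubharmonic weight (see the proof of Theorem~\ref{L2-type Log Dolbeault isomorphism with sHm line}, in particular Lemma~\ref{Lemma positivity of h_V} and the lemma immediately following it, which carry out exactly the reduction $(p,q)\rightsquigarrow(n,q)$ and the $L^2$ solution you outline).

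One minor point on case~$(b)$: the symbol $F$ is overloaded between the statement and Definition~\ref{def Nakano semi-posi sing}. What you intend is to take the auxiliary bundle in that definition to be the theorem's $F$ equipped with the twisted metric $h_Fe^{-\lambda|z|^2}$ (or, equivalently, to trivialize $F$ on the ball and solve componentwise with the scalar weight $e^{-\lambda|z|^2}$), so that the curvature operator becomes positive on $(n,s)$-forms for all $s\geq1$ while the extra factor $e^{-\lambda|z|^2}$ stays bounded above and below on $B'$ and hence does not affect membership in $\mathscr{L}^{n,q-1}$.
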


Let $D$ be a simple normal crossing divisor and $Y\!:=\!X\setminus \!D$.
Let $\omega_Y$ is a smooth \kah metric on $Y$ and $h^E_Y$ is a singular Hermitian metric on $E|_Y$. The sheaf $\mathscr{L}^{p,q}_{E,h_Y^E,\omega_Y}$ over $X$ is defined as follows.
For any open subset $U$ of $X$, the section space $\mathscr{L}^{p,q}_{E,h_Y^E,\omega_Y}(U)$ consists of $E$-valued $(p,q)$-forms $u$ with measurable coefficients 
such that the $L^2$ norms of both $u$ and $\overline{\partial}u$, i.e. $|u|^2_{h^E_Y,\omega_Y}$ and $|\overline{\partial}u|^2_{h^E_Y,\omega_Y}$, are integrable on $U\setminus D$. 

It is well-known that the $\overline{\partial}$-operator with respect to $\mathscr{L}^{p,q}_{E,h_Y^E,\omega_Y}$ form an $L^2$-type Dolbeault complex $(\mathscr{L}^{p,\ast}_{E,h_Y^E,\omega_Y},\overline{\partial})$:
\begin{align*}
    \xymatrix{
    0 \ar[r] & \mathrm{ker}\,\overline{\partial}_0 \hookrightarrow \mathscr{L}^{p,0}_{E,h_Y^E,\omega_Y} \ar[r]^-{\overline{\partial}_0} & \mathscr{L}^{p,1}_{E,h_Y^E,\omega_Y} \ar[r]^-{\overline{\partial}_1} & \cdots \ar[r]^-{\overline{\partial}_{n-1}} & \mathscr{L}^{p,n}_{E,h_Y^E,\omega_Y} \ar[r] & 0,
    }
\end{align*}
where $\overline{\partial}_j=\overline{\partial}_{E}=\overline{\partial}\otimes\mathrm{id}_{E}$.

Let $F$ be a holomorphic vector bundle over $X$. 
For any local coordinate chart $(W;z_1,\ldots,z_n)$ along $D$, two smooth Hermitian metrics $h_1$ and $h_2$ on $F$ defined on $Y$ are said to be equivalent along $D$ on $W$ 
if there is a positive constant $C$ such that $(1/C)h_1\leq h_2\leq Ch_1$ on $W\setminus D$.
In this case we write $h_1\sim h_2$ on $W$.

Let $E$ be a holomorphic vector bundle over $X$ equipped with a singular Hermitian metric $h$.
If $h_1\sim h_2$ on $W$ then we write $h_1\otimes h\sim h_2\otimes h$ on $W$.
Note that we get $\mathscr{L}^{p,q}_{F\otimes E,h_1\otimes h,\omega_Y}(W)=\mathscr{L}^{p,q}_{F\otimes E,h_2\otimes h,\omega_Y}(W)$ if $h_1\otimes h\sim h_2\otimes h$ on $W$.

\section{$L^2$-type Dolbeault isomorphisms for logarithmic sheaves} 

\subsection{An $L^2$-type Dolbeault isomorphism for logarithmic sheaves twisted by multiplier ideal sheaves}

In this subsection, we present the following $L^2$-type Dolbeault sheaf resolution.

\begin{theorem}\label{L2-type Log Dolbeault isomorphism with sHm line}
    Let $(X,\omega)$ be a compact \kah manifold, $D=\sum^s_{j=1}D_j$ be a simple normal crossing divisor in $X$ and $\omega_P$ be a smooth \kah metric on $Y:=X\setminus D$ which is of Poincar\'e type along $D$. 
    Let $(F,h_F)$ be a holomorphic vector bundle over $X$, $(L,h)$ be a pseudo-effective line bundle over $X$, i.e. $i\Theta_{L,h}\geq0$ in the sense of currents, and $\sigma_j$ be the defining section of $D_j$. Fix smooth Hermitian metrics $||\bullet||_{D_j}$ on $\mathcal{O}_X(D_j)$. 

    Then for any enough large integer $\alpha>0$ there exists a smooth Hermitian metric 
    \begin{align*}
        h^F_Y=h_F\prod^s_{j=1}||\sigma_j||^2_{D_j}(\log ||\sigma_j||^2_{D_j})^{2\alpha}
    \end{align*}
    on $F|_Y$ such that the sheaf $\Omega_X^p(\log D) \otimes\mathcal{O}_X(F\otimes L)\otimes \mathscr{I}(h) $ over $X$ enjoys a fine resolution 
    given by the $L^2$-Dolbeault complex $(\mathscr{L}^{p,\ast}_{F\otimes L,h^F_Y\otimes h,\omega_P},\overline{\partial})$.

    Moreover, if there exists $0<\delta\leq1$ such that $\nu(-\log h,x)<2\delta$ for all points in $D$ then for any enough large integer $\alpha>0$ and any $\tau_j\in(\delta,1]$ there exists a smooth Hermitian metric 
    \begin{align*}
        h^F_Y=h_F\prod^s_{j=1}||\sigma_j||^{2\tau_j}_{D_j}(\log ||\sigma_j||^2_{D_j})^{2\alpha} 
    \end{align*}
    on $F|_Y$ such that the sheaf $\Omega_X^p(\log D) \otimes\mathcal{O}_X(F\otimes L)\otimes \mathscr{I}(h) $ over $X$ enjoys a fine resolution 
    given by the $L^2$-Dolbeault complex $(\mathscr{L}^{p,\ast}_{F\otimes L,h^F_Y\otimes h,\omega_P},\overline{\partial})$.

    In other words, we have the $L^2$-type Dolbeault sheaf resolution:
    \begin{align*}
        0\longrightarrow \Omega_X^p(\log D)\otimes \mathcal{O}_X(F\otimes L)\otimes \mathscr{I}(h) \longrightarrow \mathscr{L}^{p,\ast}_{F\otimes L,h^F_Y\otimes h,\omega_P} \tag*{($\ast$)},
    \end{align*}
    where $\mathscr{L}^{p,q}_{F\otimes L,h^F_Y\otimes h,\omega_P}$ is a fine sheaf for any $0\leq p,q\leq n$. In particular, we get
    \begin{align*}
        H^q(X,\Omega_X^p(\log D)\otimes F\otimes L \otimes \mathscr{I}(h))\cong H^q(\Gamma(X,\mathscr{L}^{p,\ast}_{F\otimes L,h^F_Y\otimes h,\omega_P})). 
    \end{align*}
\end{theorem}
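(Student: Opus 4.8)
The plan is to verify the three standard conditions that make $(\mathscr{L}^{p,\ast}_{F\otimes L,h^F_Y\otimes h,\omega_P},\overline{\partial})$ a fine resolution of $\Omega_X^p(\log D)\otimes\mathcal{O}_X(F\otimes L)\otimes\mathscr{I}(h)$: (1) each $\mathscr{L}^{p,q}_{F\otimes L,h^F_Y\otimes h,\omega_P}$ is a fine sheaf; (2) the kernel of $\overline{\partial}_0$ is exactly the claimed sheaf; and (3) the complex is exact in positive degrees, i.e.\ locally $\overline{\partial}$-closed $L^2$ forms are locally $\overline{\partial}$-exact. Fineness in (1) is immediate since the sheaves are modules over the sheaf of smooth functions (partitions of unity on $X$ are available and multiplication by a smooth cutoff preserves both the $L^2$ and $\overline{\partial}$-$L^2$ conditions). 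The heart of the argument is (2), the local identification of $\ker\overline{\partial}_0$, and this is where the specific choice of $h^F_Y$ and $\omega_P$ enters.

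For (2), I would work in a polydisk chart $(W;z_1,\dots,z_n)$ with $D=\{z_1\cdots z_t=0\}$ and $Y\cap W=(\Delta_r^*)^t\times(\Delta_r)^{n-t}$. A holomorphic section of $\mathscr{L}^{p,0}_{F\otimes L,h^F_Y\otimes h,\omega_P}(W)$ is a holomorphic $F$-valued $(p,0)$-form $u$ on $W\setminus D$ with $\int_{W\setminus D}|u|^2_{h^F_Y\otimes h,\omega_P}\,dV_{\omega_P}<+\infty$. Writing $u$ in the frame $dz_I$ and using that near $D$ the Poincaré metric makes $\frac{dz_j}{z_j\log|z_j|^2}$ (for $j\le t$) and $dz_j$ (for $j>t$) an $\omega_P$-orthonormal coframe, I would expand the pointwise norm $|u|^2_{h^F_Y\otimes h,\omega_P}$ and compare it against the norm of a logarithmic form. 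The factor $||\sigma_j||^2_{D_j}(\log||\sigma_j||^2_{D_j})^{2\alpha}$ in $h^F_Y$ is designed precisely so that, after accounting for the $h$-weight and Skoda/Example \ref{Example of integral}, local square-integrability of $u$ against $h^F_Y\otimes h\otimes\omega_P$ forces $u$ to have at most a simple pole along each $D_j$ in the logarithmic sense (so $u\in\Gamma(W,\Omega^p_X(\log D)\otimes\mathcal{O}_X(F))$) and forces the coefficients to lie in $\mathscr{I}(h)$; conversely any such section is square-integrable because the $(\log|z_j|^2)^{2\alpha}$ growth of the weight dominates the $(\log|z_j|^2)^{-2}$ factors coming from at most $p$ logarithmic directions in $\omega_P$ once $\alpha$ is large, and the integral $\int_0^1 r^{\alpha'}(-\log r)^m\,dr$ converges for $\alpha'>-1$. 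In the second (Lelong number) version, $\nu(-\log h,x)<2\delta$ together with Skoda's theorem gives $\mathscr{I}(h)=\mathcal{O}_X$ near $D$, and the modified weight $||\sigma_j||^{2\tau_j}_{D_j}$ with $\tau_j>\delta$ provides exactly the room needed for the same pole-order analysis; the point is that $\tau_j$ can be chosen strictly less than $1$, which is what later yields the extra positivity freedom.

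For (3), exactness in positive degrees is a purely local $L^2$ solvability statement for $\overline{\partial}$ on $W\setminus D$ with the complete Kähler metric $\omega_P$. Shrinking $W$ if necessary and using that $(W\setminus D,\omega_P)$ is complete Kähler of finite volume, together with the fact that $h$ can be locally taken to be a (decreasing limit of smooth) plurisubharmonic weight and $h^F_Y$ is smooth on $Y$, I would invoke the standard Hörmander–Andreotti–Vesentini $L^2$ existence theorem for $\overline{\partial}$ (applied with an auxiliary plurisubharmonic weight to absorb curvature and to localize), exactly as in the proof of Theorem \ref{L2-type Dolbeault isomorphism with sHm}: this is the mechanism already used in [Wat22b, Theorem 5.4], and the presence of the divisor only changes the metric from a smooth one to the Poincaré type one, which is harmless since $\omega_P$ is still complete Kähler. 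Finally, the concluding cohomology isomorphism $H^q(X,\Omega_X^p(\log D)\otimes F\otimes L\otimes\mathscr{I}(h))\cong H^q(\Gamma(X,\mathscr{L}^{p,\ast}_{F\otimes L,h^F_Y\otimes h,\omega_P}))$ follows formally from abstract de Rham: a fine resolution computes sheaf cohomology via global sections.

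The main obstacle I expect is step (2), and within it the precise bookkeeping of powers of $\log|z_j|^2$: one must show that the interplay between (a) the $\omega_P$-volume element $\prod_{j\le t}\frac{i\,dz_j\wedge d\overline z_j}{|z_j|^2(\log|z_j|^2)^2}$, (b) the at most $p$ factors of $(\log|z_j|^2)^{2}$ gained from writing a $(p,0)$-form in the orthonormal coframe, (c) the weight factors $||\sigma_j||^{2}_{D_j}$ resp.\ $||\sigma_j||^{2\tau_j}_{D_j}$ and their logarithmic companions $(\log||\sigma_j||^2_{D_j})^{2\alpha}$, and (d) the $h$-weight $e^{-\varphi}$ with its Lelong number / multiplier ideal constraint, all balance so that finiteness of the integral is equivalent to the logarithmic pole condition together with membership in $\mathscr{I}(h)$. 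This is the computation that fixes how large $\alpha$ must be (it will depend on $p$ and the fixed comparison between $\omega_Y$ and $\omega_P$) and that explains why $\tau_j>\delta$ rather than $\tau_j\ge 1$ suffices.
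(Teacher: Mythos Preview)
Your outline matches the paper's strategy (fineness, identification of $\ker\overline{\partial}_0$, then local $L^2$ solvability for $q\ge 1$), but two of your steps have real gaps that the paper fills with specific, non-obvious ingredients.

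\textbf{Reverse inclusion in step (2).} Your argument that ``any such section is square-integrable because the $(\log|z_j|^2)^{2\alpha}$ growth of the weight dominates the $(\log|z_j|^2)^{-2}$ factors'' ignores the singular factor $e^{-\varphi}=h$. After bookkeeping, the integrand for $\sigma\in(\Omega^p_X(\log D)\otimes F\otimes L\otimes\mathscr{I}(h))(W)$ is $|\sigma_{I\lambda}|^2e^{-\varphi}\prod_{j\le t}|z_j|^{2(\tau_j-1)}(\log|z_j|^2)^{2(\alpha+\cdots-1)}$. When $\tau_j=1$ the $|z_j|$-power disappears and you are left with $|\sigma_{I\lambda}|^2e^{-\varphi}\cdot(\text{unbounded log factor})$; knowing only $\int|\sigma_{I\lambda}|^2e^{-\varphi}<\infty$ does not control this. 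The paper handles it via H\"older's inequality together with the \emph{strong openness property} of multiplier ideals (Guan--Zhou): from $\sigma_{I\lambda}\in\mathscr{I}(\varphi)$ one gets $\sigma_{I\lambda}\in\mathscr{I}(p\varphi)$ for some $p>1$, and then the remaining factor $|\sigma_{I\lambda}|^{2}\prod|z_j|^{2q(\tau_j-1)}(\log|z_j|^2)^{\cdots}$ is bounded (since $\sigma_{I\lambda}$ is holomorphic across $D$) and Example~\ref{Example of integral} applies. In the Lelong-number case the same H\"older splitting works with Skoda's integrability in place of strong openness, which you correctly invoke---but you need the analogous mechanism for $\tau_j=1$, and strong openness is essential there.

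\textbf{Exactness for $q\ge 1$.} ``Standard H\"ormander with an auxiliary psh weight'' is not enough on $(p,q)$-forms with $p<n$: adding $e^{-N\psi}$ with $i\partial\overline{\partial}\psi=\omega_P$ contributes $[i\partial\overline{\partial}\psi,\Lambda_{\omega_P}]=(p+q-n)\,\mathrm{id}$, which is not positive for $p+q\le n$. The paper instead (i) multiplies $h^F_Y$ by the bounded factor $e^{-3\alpha|z|^2}$, which leaves the sheaf unchanged on $W^*_r$ but makes $i\Theta_{F,h^F_{\alpha,\tau}}\ge 2\alpha\,\omega_P\otimes\mathrm{id}_F$ (Lemma~3.3), and (ii) passes to $V=\Omega^p_Y\otimes K_Y^{-1}\otimes F$ so that $L^2_{p,q}(W^*_r,F\otimes L)\cong L^2_{n,q}(W^*_r,V\otimes L)$; for $\alpha$ large the induced $h_V$ is Nakano positive with $[i\Theta_{V,h_V},\Lambda_{\omega_P}]\ge C_q>0$ on $(n,q)$-forms (Lemma~\ref{Lemma positivity of h_V}, from \cite{HLWY16}). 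Only then does the $L^2$-estimate with singular $h$ on $L$ (\cite{Wat22b}, Theorem~4.3) yield a solution in the correct $L^2$ space. Your sketch does not supply either the $V$-twist or the curvature lower bound against $\omega_P$, and without them the local solvability claim does not follow.
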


To prove the theorem, we first show the following proposition.

\begin{proposition}\label{L2-type Log Dol isom at q=0 with sHm line}
    Let $X,Y,D,\omega,\omega_P,F,h_F,L,h,\sigma_j$ and $||\bullet||_{D_j}$ be as in Theorem \ref{L2-type Log Dolbeault isomorphism with sHm line}.

    Then for any integer $\alpha>0$ there exists a smooth Hermitian metric 
    \begin{align*}
        h^F_Y:=h^F_{\alpha}=h_F\prod^s_{j=1}||\sigma_j||^2_{D_j}(\log ||\sigma_j||^2_{D_j})^{2\alpha}
    \end{align*}
    on $F|_Y$ such that the following $L^2$-Dolbeault complex $(\mathscr{L}^{p,\ast}_{F\otimes L,h^F_Y\otimes h,\omega_P},\overline{\partial})$ is exact at $q=0$
    \begin{align*}
        0\longrightarrow \Omega_X^p(\log D)\otimes \mathcal{O}_X(F\otimes L)\otimes \mathscr{I}(h) \longrightarrow \mathscr{L}^{p,\ast}_{F\otimes L,h^F_Y\otimes h,\omega_P}, 
    \end{align*}
    i.e. $\mathrm{ker}\,\overline{\partial}_0=\Omega_X^p(\log D)\otimes \mathcal{O}_X(F\otimes L)\otimes \mathscr{I}(h)$.

    Moreover, if there exists $0<\delta\leq1$ such that $\nu(-\log h,x)<2\delta$ for all points in $D$ then for any integer $\alpha>0$ and any $\tau_j\in(\delta,1]$ there exists a smooth Hermitian metric 
    \begin{align*}
        h^F_Y:=h^F_{\alpha,\tau}=h_F\prod^s_{j=1}||\sigma_j||^{2\tau_j}_{D_j}(\log ||\sigma_j||^2_{D_j})^{2\alpha} 
    \end{align*}
    on $F|_Y$ such that the $L^2$-Dolbeault complex $(\mathscr{L}^{p,\ast}_{F\otimes L,h^F_Y\otimes h,\omega_P},\overline{\partial})$ over $X$ is exact at $q=0$.
\end{proposition}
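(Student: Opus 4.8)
The plan is to argue locally. Both assertions being local, I would fix a coordinate polydisc $(W;z_1,\dots,z_n)$ of $X$ with $D\cap W=\{z_1\cdots z_t=0\}$ and prove that $\ker\overline{\partial}_0(W)=\Gamma\bigl(W,\Omega_X^p(\log D)\otimes\mathcal{O}_X(F\otimes L)\otimes\mathscr{I}(h)\bigr)$ as subsheaves of $\mathscr{L}^{p,0}_{F\otimes L,h^F_Y\otimes h,\omega_P}$. After rescaling each $||\bullet||_{D_j}$ by a small positive constant one may assume $||\sigma_j||^2_{D_j}<1$ on $X$, so that $h^F_Y$ (in either of its two forms) is a genuine smooth Hermitian metric on $F|_Y$; since replacing $||\bullet||_{D_j}$ by an equivalent smooth metric multiplies $h^F_Y$ by a factor equivalent along $D$, the complex $(\mathscr{L}^{p,\ast}_{F\otimes L,h^F_Y\otimes h,\omega_P},\overline{\partial})$ is independent of these choices. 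Writing $h=e^{-\varphi}$ locally with $\varphi$ plurisubharmonic (using $i\Theta_{L,h}\ge0$) and recalling that $h_F$ is smooth, a germ $u_J$ of $\mathcal{O}_X(F\otimes L)$ lies in $\mathcal{O}_X(F\otimes L)\otimes\mathscr{I}(h)$ precisely when $|u_J|^2_{h_F\otimes h}\in L^1_{\mathrm{loc}}$. A section $u\in\mathscr{L}^{p,0}(W)$ with $\overline{\partial}_0u=0$ is holomorphic on $W\setminus D$ and expands uniquely in the logarithmic frame, $u=\sum_{|J|=p}u_J\,\eta_J$ with $\eta_J=\bigl(\bigwedge_{j\in J_1}\tfrac{dz_j}{z_j}\bigr)\wedge\bigl(\bigwedge_{k\in J_2}dz_k\bigr)$, $J_1=J\cap\{1,\dots,t\}$, $J_2=J\setminus J_1$, the $u_J$ holomorphic $F\otimes L$-valued on $W\setminus D$. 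Using $||\sigma_j||^2_{D_j}\asymp|z_j|^2$ near $D_j$, $\ \|\eta_J\|^2_{\omega_P}=\prod_{j\in J_1}(\log|z_j|^2)^2$ and $dV_{\omega_P}\asymp\prod_{j=1}^t|z_j|^{-2}(\log|z_j|^2)^{-2}\,d\lambda$, a direct computation gives (up to a factor bounded above and below by positive constants on $W$)
\[
|u|^2_{h^F_Y\otimes h,\omega_P}\,dV_{\omega_P}\ \asymp\ \sum_{|J|=p}|u_J|^2_{h_F\otimes h}\prod_{j=1}^t|z_j|^{2\theta_j-2}\,\prod_{j\in J_1}(-\log|z_j|^2)^{2\alpha}\,\prod_{j\in\{1,\dots,t\}\setminus J_1}(-\log|z_j|^2)^{2\alpha-2}\,d\lambda,
\]
where $\theta_j=1$ in the first statement (so the monomial factor is trivial) and $\theta_j=\tau_j\in(\delta,1]$ in the second; note $2\alpha-2\ge0$ and $2\theta_j-2\le0$.

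For the inclusion ``$\subseteq$'': on $W$ every factor occurring in the displayed formula is $\ge1$, so finiteness of $\int_{W\setminus D}|u|^2_{h^F_Y\otimes h,\omega_P}\,dV_{\omega_P}$ forces $|u_J|^2_{h_F\otimes h}\in L^1_{\mathrm{loc}}(W)$ for every $J$. Since $h_F$ is smooth and $\varphi$ is locally bounded above, this yields $u_J\in L^2_{\mathrm{loc}}(W)$; a function holomorphic on $W\setminus D$ that is locally $L^2$ extends holomorphically across the hypersurface $D$, so $u_J\in\bigl(\mathcal{O}_X(F\otimes L)\otimes\mathscr{I}(h)\bigr)(W)$ and hence $u\in\Gamma\bigl(W,\Omega_X^p(\log D)\otimes\mathcal{O}_X(F\otimes L)\otimes\mathscr{I}(h)\bigr)$.

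The reverse inclusion ``$\supseteq$'' is where the work lies, and I expect it to be the main obstacle. Such a $u=\sum u_J\eta_J$ is $\overline{\partial}$-closed, so by the displayed formula it remains to verify, for each germ $f:=u_J\in\mathscr{I}(h)_x$, that
\[
I:=\int_W|f|^2e^{-\varphi}\prod_{j=1}^t|z_j|^{2\theta_j-2}\prod_{j=1}^t(-\log|z_j|^2)^{M_j}\,d\lambda<\infty
\]
for arbitrary $M_j\ge0$. In the first statement ($\theta_j=1$) I would invoke the strong openness theorem of Guan--Zhou to obtain $\eta>0$ with $|f|^2e^{-(1+\eta)\varphi}\in L^1_{\mathrm{loc}}$, write the integrand as $\bigl(|f|^2e^{-(1+\eta)\varphi}\bigr)^{1/(1+\eta)}\cdot\bigl(|f|^2\prod_j(-\log|z_j|^2)^{M_j}\bigr)^{\eta/(1+\eta)}$, and apply H\"older's inequality with conjugate exponents $1+\eta$ and $(1+\eta)/\eta$; this bounds $I$ by a product of $\|\,|f|^2e^{-(1+\eta)\varphi}\|_{L^1}$ and $\int_W|f|^2\prod_j(-\log|z_j|^2)^{M_j(1+\eta)/\eta}\,d\lambda$, the latter finite because $f$ is bounded near $x$ and $\int_0^1 r(-\log r)^m\,dr<\infty$ for every $m$ by Example \ref{Example of integral}. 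In the second statement ($\theta_j=\tau_j\in(\delta,1]$) the Lelong-number hypothesis $\nu(-\log h,x)=\nu(\varphi,x)<2\delta$ together with Skoda's integrability theorem gives $e^{-\varphi/\delta}\in L^1_{\mathrm{loc}}$ near $x$, hence $|f|^2e^{-\varphi/\delta}\in L^1_{\mathrm{loc}}$; H\"older's inequality with exponents $p=1/\delta$ and $q=1/(1-\delta)$ then bounds $I$ by a product of $\|\,|f|^2e^{-\varphi/\delta}\|_{L^1}$ and $\int_W|f|^2\prod_j|z_j|^{-2q(1-\tau_j)}\prod_j(-\log|z_j|^2)^{qM_j}\,d\lambda$, and the latter is finite since $q(1-\tau_j)<q(1-\delta)=1$ keeps each exponent of $|z_j|$ strictly larger than $-1$, so Example \ref{Example of integral} again applies with the logarithmic powers causing no harm. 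Combining the two inclusions gives $\ker\overline{\partial}_0=\Omega_X^p(\log D)\otimes\mathcal{O}_X(F\otimes L)\otimes\mathscr{I}(h)$, i.e. exactness of $(\mathscr{L}^{p,\ast}_{F\otimes L,h^F_Y\otimes h,\omega_P},\overline{\partial})$ at $q=0$. The only genuinely delicate point is this last integrability upgrade, passing from bare membership in the multiplier ideal to the monomial- and logarithm-weighted local integrability dictated by the Poincar\'e metric; strong openness handles the first statement, while in the second it is precisely the interplay of $\nu(-\log h,\cdot)<2\delta$, $\tau_j>\delta$, Skoda's theorem and H\"older's inequality that does the job — all the simple-normal-crossing and Poincar\'e geometry having been absorbed into the explicit exponents of the displayed formula.
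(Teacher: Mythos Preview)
Your argument is correct and matches the paper's two-inclusion strategy: the explicit Poincar\'e-weight computation, the ``all factors $\ge 1$'' observation for $\ker\overline{\partial}_0\subseteq\Omega^p_X(\log D)\otimes\mathcal{O}_X(F\otimes L)\otimes\mathscr{I}(h)$, and strong openness (resp.\ Skoda) plus H\"older for the reverse inclusion in the $\tau_j=1$ (resp.\ $\tau_j\in(\delta,1]$) case. The only cosmetic difference is that where you invoke $L^2$-extension of holomorphic functions across the hypersurface $D$ to extend each $u_J$, the paper instead expands $u_J$ in a Laurent series in $z_1,\dots,z_t$ and checks directly via Example~\ref{Example of integral} that all negative-exponent terms must vanish.
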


For any fixed point $x_0\in Y=X\setminus D$, there exists an open neighborhood $U$ of $x_0$ such that $U\subset Y$, i.e. $U\cap D=\emptyset$.
Then we get $\Omega_X^p(\log D)=\Omega_X^p$ on $U$. Since Theorem \ref{L2-type Dolbeault isomorphism with sHm}, we have the exactness of $(\ast)$ on $Y$. 
Thus, it is sufficient that Theorem \ref{L2-type Log Dolbeault isomorphism with sHm line} and Proposition \ref{L2-type Log Dol isom at q=0 with sHm line} show the exactness of $(\ast)$ for all points on $D$.

\begin{proof}
    For any fixed point $x_0\in D$, let $(W;z_1,\ldots,z_n)$ be a local coordinate chart centered at $x_0$ along $D$ such that the locus of $D$ is given by $z_1\cdots z_t=0$ and that $F$ is trivial, i.e. $F|_W=W\times\mathbb{C}^r:=\underline{\mathbb{C}^r}$.
    Here, $W=W_r:=\Delta^n_r$, $Y\cap W=W\setminus D:=W^*_r=(\Delta^*_r)^t\times(\Delta_r)^{n-t}$ and $r\in(0,1)$.
    By the assumption $i\Theta_{L,h}\geq0$, the function $-\log h$ is plurisubharmonic on $W$ and $\sup_W-\log h<+\infty$, i.e. $\inf_Wh>0$. 
    Let $I_F$ be a trivial Hermitian metric of $F=\underline{\mathbb{C}^r}$ on $W$ and $b=(b_1,\ldots,b_r)$ be a holomorphic frame of $\underline{\mathbb{C}^r}$ on $W$ where $b_j=(0,\cdots,0,1,0,\cdots,0)$ and $b$ is orthonormal with respect to $I_F$. 

    Note that the smooth Hermitian metric $h^F_{\alpha,\tau}|_{W^*_r}$ is equivalent to the following smooth Hermitian metric
    \begin{align*}
        I^F_{\alpha,\tau}=I_F\prod^t_{j=1}|z_j|^{2\tau_j}(\log|z_j|^2)^{2\alpha} \quad \mathrm{on}~\,W^*_r,
    \end{align*}
    where $h_F\sim I_F$ and that $\mathscr{L}^{p,q}_{F\otimes L,h^F_Y\otimes h,\omega_Y}(W_r)=\mathscr{L}^{p,q}_{\underline{\mathbb{C}^r}\otimes L,I^F_{\alpha,\tau}\otimes h,\omega_Y}(W_r)$.
    Denote 
    \begin{align*}
        e_\lambda=e_L\otimes b_\lambda, \quad \zeta_j=\frac{1}{z_j}dz_j ~ \mathrm{for} ~ 1\leq j\leq t \quad \mathrm{and} \quad \zeta_j=dz_j ~ \mathrm{for} ~ t+1\leq j\leq n.  
    \end{align*}

    (I) We prove $\mathrm{ker}\,\overline{\partial}_0\subset \Omega_X^p(\log D)\otimes \mathcal{O}_X(F\otimes L)\otimes \mathscr{I}(h)$, i.e. for any $\sigma\in \mathscr{L}^{p,0}_{F\otimes L,h^F_Y\otimes h,\omega_Y}(W)$ such that $\overline{\partial}\sigma=0$, we get the $\sigma\in(\Omega_X^p(\log D)\otimes\mathcal{O}_X(F\otimes L)\otimes\mathscr{I}(h))(W)$.
    We can write 
    \begin{align*}
        \sigma(z)=\sum_{|I|=p,\lambda}\sigma_{I\lambda}(z)\zeta_I\otimes e_\lambda=\sum_{|I|=p,\lambda}\frac{\sigma_{I\lambda}(z)}{z_{I\cap\{1,\ldots,t\}}}dz_I\otimes e_\lambda
    \end{align*}
    where $\sigma_{I\lambda}(z)/z_{I\cap\{1,\ldots,t\}}$ is a measurable function. By the assumption $\overline{\partial}\sigma=0$ and Dolbeault-Grothendieck lemma (see [Dem-book,\,ChapterI]), the function $\sigma_{I\lambda}(z)/z_{I\cap\{1,\ldots,t\}}$ is holomorphic on $W^*_r$.
    Then the function $\sigma_{I\lambda}(z)=\sigma_{I\lambda}(z)/z_{I\cap\{1,\ldots,t\}}\cdot z_{I\cap\{1,\ldots,t\}}$ is also holomorphic on $W^*_r$.

    First, we show the case assuming condition $\nu(-\log h,x)<2$ for all points in $D$.
    If we denote $I\cap\{1,\ldots,t\}=\{i_{p1},\ldots,i_{pb}\}$, then we have that $\sigma\in \mathscr{L}^{p,0}_{F\otimes L,h^F_Y\otimes h,\omega_Y}(W_r) \iff$ 
    \begin{align*}
        +\infty>||\sigma||^2_{I^F_{\alpha,\tau}\otimes h,\omega_P}\Big|_{W^*_r}=&\int_{W^*_r}|\sigma|^2_{I^F_{\alpha,\tau}\otimes h,\omega_P}\omega_P^n\\
        =&\sum_{|I|=p,\lambda}\int_{W^*_r}|e_\lambda|^2_{I_F\otimes h}\Bigl(|\sigma_{I\lambda}(z)|^2\prod^b_{\nu=1}(\log|z_{i_{p\nu}}|^2)^2\prod^t_{j=1}|z_j|^{2\tau_j}(\log|z_j|^2)^{2\alpha}\Bigr)\omega_P^n\\
        \geq&\inf_Wh\sum_{|I|=p,\lambda}\int_{W^*_r}\Bigl(|\sigma_{I\lambda}(z)|^2\prod^b_{\nu=1}(\log|z_{i_{p\nu}}|^2)^2\prod^t_{j=1}|z_j|^{2\tau_j}(\log|z_j|^2)^{2\alpha}\Bigr)\omega_P^n\\
        \Longrightarrow \,~\mathrm{for ~any} \,\,I ~\mathrm{and}~\lambda, \,~&\int_{W^*_r}\Bigl(|\sigma_{I\lambda}(z)|^2\prod^b_{\nu=1}(\log|z_{i_{p\nu}}|^2)^2\prod^t_{j=1}|z_j|^{2\tau_j}(\log|z_j|^2)^{2\alpha}\Bigr)\omega_P^n<+\infty.
    \end{align*}
    Suppose that the Laurent series representation of $\sigma_{I\lambda}(z)$ on $W^*_r$ is given by 
    \begin{align*}
        \sigma_{I\lambda}(z)=\sum^\infty_{\beta=-\infty}\sigma_{I\lambda\beta}(z_{t+1},\ldots,z_n)z^{\beta_1}_1\cdots z^{\beta_t}_t, \quad \beta=(\beta_1,\ldots,\beta_t)
    \end{align*}
    where $\sigma_{I\lambda\beta}(z_{t+1},\ldots,z_n)$ is a holomorphic function on $\Delta^{n-t}_r$.
    Then by using polar coordinate, Fubini's theorem and Example \ref{Example of integral}, we see that if $\sigma$ is $L^2$-integrable on $W^*_r$ then $\beta_j>-\tau_j$ along $D_j$.
    In fact, 
    \begin{align*}
        +\infty&>\int_{W^*_r}\Bigl(|\sigma_{I\lambda}(z)|^2\prod^b_{\nu=1}(\log|z_{i_{p\nu}}|^2)^2\prod^t_{j=1}|z_j|^{2\tau_j}(\log|z_j|^2)^{2\alpha}\Bigr)\omega_P^n\\
        &=\int_{W^*_r}|\sigma_{I\lambda}(z)|^2\prod^t_{j=1}|z_j|^{2\tau_j}(\log|z_j|^2)^{2(\alpha+\sum^b_{\nu=1}\delta_{ji_{p\nu}})}\prod^t_{k=1}\Bigl(|z_k|^2(\log|z_k|^2)^2\Bigr)^{-1}dV_{\mathbb{C}^n}\\
        &=\int_{W^*_r}|\sigma_{I\lambda}(z)|^2\prod^t_{j=1}|z_j|^{2(\tau_j-1)}(\log|z_j|^2)^{2(\alpha+\sum^b_{\nu=1}\delta_{ji_{p\nu}}-1)}dV_{\mathbb{C}^n}\\
        &\geq\int_{W^*_r}|\sigma_{I\lambda\beta}(z)|^2\prod^t_{j=1}|z_j|^{2(\tau_j+\beta_j-1)}(\log|z_j|^2)^{2(\alpha+\sum^b_{\nu=1}\delta_{ji_{p\nu}}-1)}dV_{\mathbb{C}^n}\\
        &=c\int_{\Delta^t_r}\prod^t_{j=1}|z_j|^{2(\tau_j+\beta_j-1)}(\log|z_j|^2)^{2(\alpha+\sum^b_{\nu=1}\delta_{ji_{p\nu}}-1)}dV_{\mathbb{C}^t}\\
        &=c\prod^t_{j=1}2\pi\int^r_0r_j^{2(\tau_j+\beta_j-1)+1}(2\log r_j)^{2(\alpha+\sum^b_{\nu=1}\delta_{ji_{p\nu}}-1)}dr_j,
    \end{align*} 
    where $\omega^n_P=\Bigl(\prod^t_{k=1}|z_k|^2(\log|z_k|^2)^2\Bigr)^{-1}dV_{\mathbb{C}^n}$ and $c:=\int_{\Delta^{n-t}_r}|\sigma_{I\lambda\beta}|^2dV_{\mathbb{C}^{n-t}}>0$.

    Since $\tau_j\in (0,1]$, we obtain that $\beta_j\geq0$ and $\sigma_{I\lambda}(z)$ has removable singularity. 
    Hence $\sigma$ and $D\sigma$ have only logarithmic pole, and $\sigma$ is a section of $\Omega_X^p(\log D)\otimes \mathcal{O}_X(F\otimes L)=\Omega_X^p(\log D)\otimes \mathcal{O}_X(F\otimes L)\otimes\mathscr{I}(h)$ on $W_r$, 
    where $\mathcal{O}_X=\mathscr{I}(h)$ near $x_0$ by the assumption $\nu(-\log h,x_0)<2$ and Skoda's result.

    Second, we show the case without the Lelong number condition of $-\log h$, i.e. case $\tau_j=1$.
    Here, we already know that for any $\sigma\in \mathscr{L}^{p,0}_{F\otimes L,h^F_Y\otimes h,\omega_Y}(W)$ such that $\overline{\partial}\sigma=0$, we get $\sigma\in(\Omega_X^p(\log D)\otimes\mathcal{O}_X(F\otimes L))(W)$ by the above.
    It is sufficient to show that $\sigma\in(\Omega_X^p(\log D)\otimes\mathcal{O}_X(F\otimes L)\otimes\mathscr{I}(h))(W)$, i.e. $\sigma_{I\lambda}\in\mathscr{I}(h)(W)$ for any $I$ and $\lambda$.

    From the above inequality, if $r\in(0,1/2)$ then we have that 
    \begin{align*}
        +\infty>||\sigma||^2_{I^F_\alpha\otimes h,\omega_P}\Big|_{W^*_r} 
        &=\sum_{|I|=p,\lambda}\int_{W^*_r}\Bigl(|\sigma_{I\lambda}(z)|_h^2\prod^b_{\nu=1}(\log|z_{i_{p\nu}}|^2)^2\prod^t_{j=1}|z_j|^{2}(\log|z_j|^2)^{2\alpha}\Bigr)\omega_P^n\\
        &=\sum_{|I|=p,\lambda}\int_{W_r}|\sigma_{I\lambda}(z)|_h^2\prod^t_{j=1}(\log|z_j|^2)^{2(\alpha+\sum^b_{\nu=1}\delta_{ji_{p\nu}}-1)}dV_{\mathbb{C}^n}\\
        &\geq\sum_{|I|=p,\lambda}\int_{W_r}|\sigma_{I\lambda}(z)|_h^2dV_{\mathbb{C}^n},
    \end{align*}
    where $I^F_\alpha:=I^F_{\alpha,1}=I_F\prod^t_{j=1}|z_j|^{2}(\log|z_j|^2)^{2\alpha}$ on $W^*_r$ by $\tau_j=1$ and $\inf_{W_r}(\log|z_j|^2)^{2\alpha}>1$. 

    (II) We prove $\Omega_X^p(\log D)\otimes \mathcal{O}_X(F\otimes L)\otimes \mathscr{I}(h)\subset \mathrm{ker}\,\overline{\partial}_0$, i.e. for any $\sigma\in(\Omega_X^p(\log D)\otimes\mathcal{O}_X(F\otimes L)\otimes\mathscr{I}(h))(W)$, we get $\sigma\in \mathscr{L}^{p,0}_{F\otimes L,h^F_Y\otimes h,\omega_Y}(W)$.
    Here, $\sigma(z)=\sum_{|I|=p,\lambda}\sigma_{I\lambda}(z)\zeta_I\otimes e_\lambda=\sum_{|I|=p,\lambda}\frac{\sigma_{I\lambda}(z)}{z_{I\cap\{1,\ldots,t\}}}dz_I\otimes e_\lambda$, any $\sigma_{I\lambda}$ is holomorphic on $W$ and $\int_W|\sigma_{I\lambda}|^2_hdV_{\mathbb{C}^n}<+\infty$ by the assumption.
    Then the $L^2$-norm of $\sigma$ is the following:
    \begin{align*}
        ||\sigma||^2_{I^F_{\alpha,\tau}\otimes h,\omega_P}\Big|_{W^*_r}
        =\sum_{|I|=p,\lambda}\int_{W^*_r}|\sigma_{I\lambda}(z)|^2_h\prod^t_{j=1}|z_j|^{2(\tau_j-1)}(\log|z_j|^2)^{2(\alpha+\sum^b_{\nu=1}\delta_{ji_{p\nu}}-1)}dV_{\mathbb{C}^n}.
    \end{align*}
    
    We show that for any $I$ and $\lambda$, 
    \begin{align*}
        J_{I,\lambda}:=\int_{W^*_r}|\sigma_{I\lambda}(z)|^2_h\prod^t_{j=1}|z_j|^{2(\tau_j-1)}(\log|z_j|^2)^{2(\alpha+\sum^b_{\nu=1}\delta_{ji_{p\nu}}-1)}dV_{\mathbb{C}^n}<+\infty.
    \end{align*}
    Let $f_p:=|\sigma_{I\lambda}|^{2/p}h=|\sigma_{I\lambda}|^{2/p}e^{-\varphi}$ and $g=|\sigma_{I\lambda}|^{2(1-1/p)}\prod^t_{j=1}|z_j|^{2(\tau_j-1)}(-\log|z_j|^2)^{2(\alpha+\sum^b_{\nu=1}\delta_{ji_{p\nu}}-1)}$ on $W^*_r$ for any $p>1$. 
    By the H\"older's inequality, we get 
    \begin{align*}
        J_{I,\lambda}=\int_{W^*_r}|f_pg|dV_{\mathbb{C}^n}\leq\Bigl(\int_{W^*_r}|f_p|^pdV_{\mathbb{C}^n}\Bigr)^{1/p}\cdot\Bigl(\int_{W^*_r}|g|^qdV_{\mathbb{C}^n}\Bigr)^{1/q}
    \end{align*}
    Here $1=1/p+1/q$. 
    From $\sigma_{I\lambda}\in\mathscr{I}(\varphi)(W)$ and the strong openness property on line bundles (see \cite{GZ15}, \cite{JM12}), for some $r'\in(0,r)$ there exists $p>1$ such that $\int_{W_{r'}}|f_p|^pdV_{\mathbb{C}^n}=\int_{W_{r'}}|\sigma_{I\lambda}|^2e^{-p\varphi}dV_{\mathbb{C}^n}<+\infty$. And we get
    \begin{align*}
        \int_{W^*_r}|g|^qdV_{\mathbb{C}^n}
        &=\int_{W^*_r}|\sigma_{I\lambda}|^2\prod^t_{j=1}|z_j|^{2q(\tau_j-1)}(-\log|z_j|^2)^{2q(\alpha+\sum^b_{\nu=1}\delta_{ji_{p\nu}}-1)}dV_{\mathbb{C}^n}\\
        &\leq C(\pi r^2)^{n-t+1}\prod^t_{j=1}\int_{|z_j|<r}|z_j|^{2q(\tau_j-1)}(-\log|z_j|^2)^{2q(\alpha+\sum^b_{\nu=1}\delta_{ji_{p\nu}}-1)}dz_j\wedge d\overline{z}_j\\
        &=C(\pi r^2)^{n-t+1}\prod^t_{j=1}2\pi\int^r_0r_j^{2q(\tau_j-1)+1}(-\log r_j^2)^{2q(\alpha+\sum^b_{\nu=1}\delta_{ji_{p\nu}}-1)}dr_j,
    \end{align*}
    where $C:=\sup_{W_r}|\sigma_{I\lambda}|^2<+\infty$. Hence, by Example \ref{Example of integral} if $\tau_j>1-1/q=1/p$, then $\int_{W^*_r}|g|^qdV_{\mathbb{C}^n}<+\infty$ and $\sigma$ is $L^2$-integrable on $W^*_{r'}$.
    This concludes the case without the Lelong number condition of $-\log h$, i.e. the case of $\tau_j=1$.

    Finally, we show the case assuming condition $\nu(-\log h,x)<2\delta$ for all points in $D$.
    Then $h^{1/\delta}=e^{-\varphi/\delta}$ is locally integrable around $x_0\in D$ by Skoda's result.
    Let $g=|\sigma_{I\lambda}|^2\prod^t_j|z_j|^{2(\tau_j-1)}(-\log|z_j|^2)^{2(\alpha+\sum^b_{\nu}\delta_{ji_{p\nu}}-1)}$ on $W^*_r$.
    By H\"older's inequality, we get 
    \begin{align*}
        J_{I,\lambda}=\int_{W^*_r}|hg|dV_{\mathbb{C}^n}\leq\Bigl(\int_{W^*_r} h^{1/\delta}dV_{\mathbb{C}^n}\Bigr)^{\delta}\cdot\Bigl(\int_{W^*_r}|g|^qdV_{\mathbb{C}^n}\Bigr)^{1/q},
    \end{align*}
    where $q=1/(1-\delta)$. Similarly to the above,
    \begin{align*}
        \int_{W^*_r}|g|^qdV_{\mathbb{C}^n}\leq C(\pi r^2)^{n-t+1}\prod^t_{j=1}2\pi\int^r_0r_j^{2q(\tau_j-1)+1}(-\log r_j^2)^{2q(\alpha+\sum^b_{\nu=1}\delta_{ji_{p\nu}}-1)}dr_j,
    \end{align*}
    where $C:=\sup_{W_r}|\sigma_{I\lambda}|^{2q}<+\infty$.
    Hence, by Example \ref{Example of integral} if $\tau_j>1-1/q=\delta$, then $\int_{W^*_r}|g|^qdV_{\mathbb{C}^n}<+\infty$ and $\sigma$ is $L^2$-integrable on $W^*_{r}$, i.e. $\sigma\in \mathscr{L}^{p,0}_{F\otimes L,h^F_Y\otimes h,\omega_Y}(W)$.
\end{proof}

\vspace{2mm}

$\mathit{Proof ~of ~Theorem ~\ref{L2-type Log Dolbeault isomorphism with sHm line}}$.
    From Proposition \ref{L2-type Log Dol isom at q=0 with sHm line}, it is sufficient to show the exactness of $(\ast)$ on $D$ at $q\geq1$. 
    For any fixed $r\in(0,1)$, we define a new smooth Hermitian metric $h^F_{\alpha,\tau}$ on $F$ over $W^*_r$ as
    \begin{align*}
        h^F_{\alpha,\tau}=h_Fe^{-3\alpha|z|^2}\prod^t_{j=1}|z_j|^{2\tau_j}(\log|z_j|^2)^{2\alpha},
    \end{align*}
    where $|z|^2=\sum^n_{j=1}|z_j|^2$. Then we have that $h^F_Y\thicksim h^F_{\alpha,\tau}$ on $W^*_r$.

\begin{lemma}
    The Chern curvature of $h^F_{\alpha,\tau}$ satisfies
    \begin{align*}
        i\Theta_{F,h^F_{\alpha,\tau}}\geq_{Nak}2\alpha\omega_P\otimes\mathrm{id}_F
    \end{align*} 
    on $W^*_r$ for some large $\alpha>0$.
\end{lemma}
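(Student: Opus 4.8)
The plan is to exploit that $h^F_{\alpha,\tau}$ is obtained from the fixed \emph{smooth} metric $h_F$ by multiplication by a smooth positive function on $W^*_r$, so that the Chern curvature changes only by the $\idd$ of an explicit scalar weight, which I then estimate directly against $\omega_P$. Concretely, I would write $h^F_{\alpha,\tau}=e^{-\psi}h_F$ on $W^*_r$, where — using $\log|z_j|^2<0$ and that $2\alpha$ is an even integer, so $(\log|z_j|^2)^{2\alpha}=(-\log|z_j|^2)^{2\alpha}>0$ —
\[
\psi=3\alpha|z|^2-\sum_{j=1}^t\tau_j\log|z_j|^2-2\alpha\sum_{j=1}^t\log(-\log|z_j|^2).
\]
Since $h^F_{\alpha,\tau}$ and $h_F$ differ by the conformal factor $e^{-\psi}$, the standard transformation rule gives $i\Theta_{F,h^F_{\alpha,\tau}}=i\Theta_{F,h_F}+(\idd\,\psi)\otimes\mathrm{id}_F$ on $W^*_r$.

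Next I would compute $\idd\,\psi$ term by term. The functions $\log|z_j|^2$ are pluriharmonic on $W^*_r$, so the middle sum contributes nothing; $\idd(3\alpha|z|^2)=3\alpha\,\omega_{\mathrm{euc}}$ with $\omega_{\mathrm{euc}}:=i\sum_{j=1}^n dz_j\wedge d\overline{z}_j$; and the elementary identity $\idd\log(-\log|z_j|^2)=-\,i\,dz_j\wedge d\overline{z}_j/\bigl(|z_j|^2(\log|z_j|^2)^2\bigr)$ gives
\[
\idd\,\psi=3\alpha\,\omega_{\mathrm{euc}}+2\alpha\sum_{j=1}^t\frac{i\,dz_j\wedge d\overline{z}_j}{|z_j|^2(\log|z_j|^2)^2}.
\]
Recognising that $\sum_{j=1}^t i\,dz_j\wedge d\overline{z}_j/\bigl(|z_j|^2(\log|z_j|^2)^2\bigr)=\omega_P-i\sum_{j=t+1}^n dz_j\wedge d\overline{z}_j$, this rearranges to
\[
\idd\,\psi=2\alpha\,\omega_P+3\alpha\sum_{j=1}^t i\,dz_j\wedge d\overline{z}_j+\alpha\sum_{j=t+1}^n i\,dz_j\wedge d\overline{z}_j\ \geq\ 2\alpha\,\omega_P+\alpha\,\omega_{\mathrm{euc}}
\]
as scalar $(1,1)$-forms. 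Tensoring with $\mathrm{id}_F$ preserves this in the Nakano sense, since a scalar $(1,1)$-form times the identity endomorphism is Nakano $\geq 0$ precisely when the form is $\geq 0$; hence $(\idd\,\psi)\otimes\mathrm{id}_F\geq_{Nak}(2\alpha\,\omega_P+\alpha\,\omega_{\mathrm{euc}})\otimes\mathrm{id}_F$.

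Finally, $h_F$ is the restriction of a globally smooth Hermitian metric on $F\to X$ with $X$ compact, so $i\Theta_{F,h_F}$ is a smooth $\mathrm{End}(F)$-valued $(1,1)$-form that is bounded; thus there is a constant $C_0>0$ depending only on $(F,h_F)$ and the chart — in particular \emph{not} on $\alpha$ or $\tau$ — with $i\Theta_{F,h_F}\geq_{Nak}-C_0\,\omega_{\mathrm{euc}}\otimes\mathrm{id}_F$ on $W^*_r$ (possibly after shrinking $r$ slightly, which is harmless as the exactness we are after is local near $D$). Adding the two estimates,
\[
i\Theta_{F,h^F_{\alpha,\tau}}\ \geq_{Nak}\ \bigl(2\alpha\,\omega_P+(\alpha-C_0)\,\omega_{\mathrm{euc}}\bigr)\otimes\mathrm{id}_F\ \geq_{Nak}\ 2\alpha\,\omega_P\otimes\mathrm{id}_F
\]
as soon as $\alpha\geq C_0$, which is the claim. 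The one point needing care — the main, if modest, obstacle — is to verify that the constant $C_0$ absorbing the background curvature $i\Theta_{F,h_F}$ is genuinely independent of $\alpha$, together with the bookkeeping that the surplus $3\alpha\,\omega_{\mathrm{euc}}$ produced by the factor $e^{-3\alpha|z|^2}$ is exactly what is needed both to dominate the "interior" directions $t+1,\dots,n$ of $2\alpha\,\omega_P$ and to leave the extra $\alpha\,\omega_{\mathrm{euc}}$ that swallows $-C_0\,\omega_{\mathrm{euc}}$ for large $\alpha$.
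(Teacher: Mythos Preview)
Your proof is correct and follows essentially the same approach as the paper's: both compute the curvature of the conformally rescaled metric via $i\Theta_{F,h^F_{\alpha,\tau}}=i\Theta_{F,h_F}+(\idd\psi)\otimes\mathrm{id}_F$, use pluriharmonicity of $\log|z_j|^2$ and the identity $-\idd\log(-\log|z_j|^2)=\frac{idz_j\wedge d\overline{z}_j}{|z_j|^2(\log|z_j|^2)^2}$ to produce the Poincar\'e directions, and spend $\alpha\,\omega_{\mathrm{euc}}$ of the surplus $3\alpha\,\omega_{\mathrm{euc}}$ to absorb the bounded background curvature $i\Theta_{F,h_F}$. Your presentation with the explicit weight $\psi$ and constant $C_0$ is a bit more bookkeeping-transparent, but the argument is the same.
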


\begin{proof}
    It is easy to show that for any $1\leq j\leq t$,
    \begin{align*}
        \idd\log|z_j|^2=0 \quad \mathrm{and}\quad -\idd\log(\log|z_j|^2)^2=\frac{2idz_j\wedge d\overline{z}_j}{|z_j|^2(\log|z_j|^2)^2}
    \end{align*}
    on $W^*_r$. The curvature of $(F,h^F_{\alpha,\tau})$ is given by 
    \begin{align*}
        i\Theta_{F,h^F_{\alpha,\tau}}&=\Bigl(-i\sum^t_{j=1}\tau_j\partial\overline{\partial}\log|z_j|^2-i\alpha\sum^t_{j=1}\partial\overline{\partial}\log(\log|z_j|^2)^2+3i\alpha\sum^n_{j=1}\partial\overline{\partial}|z_j|^2\Bigr)\otimes\mathrm{id}_F+i\Theta_{F,h_F}\\
        &=\Bigl(2i\alpha\sum^t_{j=1}\frac{dz_j\wedge d\overline{z}_j}{|z_j|^2(\log|z_j|^2)^2}+3i\alpha\sum^n_{j=1}\partial\overline{\partial}|z_j|^2\Bigr)\otimes\mathrm{id}_F+i\Theta_{F,h_F}\\
        &\geq_{Nak} 2i\alpha\Bigl(\sum^t_{j=1}\frac{dz_j\wedge d\overline{z}_j}{|z_j|^2(\log|z_j|^2)^2}+\sum^n_{j=t+1}\partial\overline{\partial}|z_j|^2\Bigr)\otimes\mathrm{id}_F=2\alpha\omega_P\otimes\mathrm{id}_F,
    \end{align*}
    if we choose $\alpha$ large enough so that $\Bigl(i\alpha\sum^n_{j=1}\partial\overline{\partial}|z_j|^2\Bigr)\otimes\mathrm{id}_F+i\Theta_{F,h_F}\geq_{Nak}0$ on $W^*_r$.
\end{proof}

\begin{lemma}\label{Lemma positivity of h_V}$(\mathrm{cf.\,[HLWY16,\,Lemma\,3.3]})$
    On the chart $W^*_r$, the holomorphic vector bundle $V:=\Omega^p_Y\otimes K^{-1}_Y\otimes F|_Y$ with the induced metric $h_V$ by $\omega_P$ and $h^F_{\alpha,\tau}$ is Nakano positive where $\alpha$ is large enough.

    Moreover, for any $u\in L^2_{n,q}(W^*_r,V,h_V,\omega_P)\cong L^2_{p,q}(W^*_r,F|_Y,h^F_{\alpha,\tau},\omega_P)$ we have 
    \begin{align*}
        \langle[i\Theta_{V,h_V},\Lambda_{\omega_P}]u,u\rangle_{h_V,\omega_P}\geq C_q|u|^2_{h_V,\omega_P},\,\,\, i.e.\,\,\,A^{n,q}_{V,h_V,\omega_P}\geq C_q,
    \end{align*}
    where $C_q$ is a positive constant independent of $q$. Therefore $h_V$ is Nakano positive.
\end{lemma}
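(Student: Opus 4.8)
The plan is to reduce the Nakano positivity of $V=\Omega^p_Y\otimes K^{-1}_Y\otimes F|_Y$ on the chart $W^*_r$ to the computation already at hand, namely the curvature estimate $i\Theta_{F,h^F_{\alpha,\tau}}\geq_{Nak}2\alpha\omega_P\otimes\mathrm{id}_F$ from the preceding lemma, combined with the well-understood curvature of the Poincar\'e metric $\omega_P$. First I would observe that on $W^*_r$ the line bundle $K_Y$ is trivialized by $dz_1\wedge\cdots\wedge dz_n$ and $\Omega^p_Y$ is trivialized by the frame $\{\zeta_I\}_{|I|=p}$ where $\zeta_j=dz_j/z_j$ for $j\le t$ and $\zeta_j=dz_j$ for $j>t$; the induced metric $h_V$ is the tensor product of the metric on $\Omega^p_Y$ induced by $\omega_P$, the metric on $K^{-1}_Y$ induced by $\omega_P$, and $h^F_{\alpha,\tau}$ on $F$. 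Since $\omega_P$ is (up to a bounded factor) a product metric — a Poincar\'e metric $idz_j\wedge d\bar z_j/(|z_j|^2(\log|z_j|^2)^2)$ in the first $t$ variables and the Euclidean metric in the rest — the curvature of the induced metrics on $\Omega^p_Y$ and $K^{-1}_Y$ can be computed explicitly, exactly as in [HLWY16, Lemma 3.3].

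The key steps, in order, are: (1) write $i\Theta_{V,h_V}=i\Theta_{\Omega^p_Y\otimes K^{-1}_Y,\,\text{induced}}\otimes\mathrm{id}_F + \mathrm{id}_{\Omega^p_Y\otimes K^{-1}_Y}\otimes i\Theta_{F,h^F_{\alpha,\tau}}$; (2) bound the first term from below in the Nakano sense by a fixed (possibly negative but $\alpha$-independent) multiple of $\omega_P\otimes\mathrm{id}$ using the product structure of $\omega_P$ — concretely, the Poincar\'e metric has curvature comparable to $-\omega_P$ in each logarithmic direction, so $i\Theta_{\Omega^p_Y\otimes K^{-1}_Y}\ge_{Nak}-C_0\,\omega_P\otimes\mathrm{id}$ for some $C_0$ depending only on $n,p,r$; (3) feed in $i\Theta_{F,h^F_{\alpha,\tau}}\ge_{Nak}2\alpha\,\omega_P\otimes\mathrm{id}_F$ from the previous lemma, so that $i\Theta_{V,h_V}\ge_{Nak}(2\alpha-C_0)\omega_P\otimes\mathrm{id}_V$, which is Nakano positive once $\alpha>C_0/2$; (4) translate Nakano positivity of the metric into the lower bound on the curvature operator: for $u\in L^2_{n,q}(W^*_r,V,h_V,\omega_P)$ the standard inequality $\langle[i\Theta_{V,h_V},\Lambda_{\omega_P}]u,u\rangle\ge (\text{smallest eigenvalue sum})|u|^2$ together with $i\Theta_{V,h_V}\ge_{Nak}(2\alpha-C_0)\omega_P\otimes\mathrm{id}$ gives $A^{n,q}_{V,h_V,\omega_P}\ge q(2\alpha-C_0)=:C_q>0$, and one may even take $C_q\ge 2\alpha-C_0$ uniformly in $q\ge 1$, which is the asserted $q$-independent constant.

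The main obstacle I expect is step (2): controlling the curvature of the metric induced by the Poincar\'e (rather than Euclidean) metric on the bundle $\Omega^p_Y\otimes K^{-1}_Y$ uniformly on all of $W^*_r$, including the asymptotics as $z_j\to 0$. The point is that although $\omega_P$ itself has curvature blowing up like $|z_j|^{-2}(\log|z_j|^2)^{-2}$ pointwise, this blow-up is precisely of the same order as $\omega_P$ itself, so the comparison $i\Theta_{\Omega^p_Y\otimes K^{-1}_Y}\ge_{Nak}-C_0\omega_P\otimes\mathrm{id}$ holds with a genuinely bounded constant $C_0$; making this rigorous requires the explicit curvature computation for the model Poincar\'e metric on the punctured disc (where $i\Theta$ of the induced metric on $T^{1,0}$ is a bounded multiple of the metric) and the observation that tensoring and dualizing only changes $C_0$ by factors depending on $n$ and $p$. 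Once $\alpha$ is chosen larger than this $C_0$, everything else is the routine linear-algebra passage from a Nakano lower bound to a lower bound on $[i\Theta,\Lambda_{\omega_P}]$ acting on $(n,q)$-forms, and the claim follows; this is the argument already carried out in [HLWY16, Lemma 3.3], adapted here by replacing the trivial metric on $F$ with $h^F_{\alpha,\tau}$ whose extra positivity $2\alpha\omega_P$ absorbs the negative contribution $-C_0\omega_P$ from the logarithmic cotangent bundle.
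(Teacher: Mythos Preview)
Your proposal is correct and follows exactly the argument the paper intends: note that the paper does not supply its own proof of this lemma but simply cites [HLWY16,\,Lemma\,3.3], and your sketch is precisely that argument, adapted so that the metric on $F$ is $h^F_{\alpha,\tau}$ (whose curvature lower bound $2\alpha\,\omega_P\otimes\mathrm{id}_F$ was established in the immediately preceding lemma). The only content beyond the citation is your step~(2), where the constant-curvature property of the model Poincar\'e metric on the punctured disc gives the uniform bound $i\Theta_{\Omega^p_Y\otimes K^{-1}_Y}\ge_{Nak}-C_0\,\omega_P\otimes\mathrm{id}$, and you have identified this correctly.
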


\begin{lemma}
    The sequence of $(\ast)$ on $D$ is exact at $q\geq1$. This is, on a small local chart $W^*_r=(\Delta^*_r)^t\times(\Delta_r)^{n-t}$, for any $\overline{\partial}$-closed $f\in L^2_{p,q}(W^*_r,F\otimes L,h^F_{\alpha,\tau}\otimes h,\omega_P)$,
    there exists $u\in L^2_{p,q-1}(W^*_r,F\otimes L,h^F_{\alpha,\tau}\otimes h,\omega_P)$ such that $\overline{\partial}u=f$.
\end{lemma}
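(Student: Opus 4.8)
The plan is to reduce the statement, via the canonical bundle isomorphism, to a $\overline{\partial}$-solvability problem for $(n,q)$-forms valued in a Nakano-positive bundle, and then invoke Demailly's $L^2$-existence theorem on $W^*_r$. Concretely, I would use the identification $\Lambda^{p,q}T^*_X\otimes F\cong\Lambda^{n,q}T^*_X\otimes V$ with $V=\Omega^p_Y\otimes K^{-1}_Y\otimes F|_Y$ as in Lemma \ref{Lemma positivity of h_V}, under which $\overline{\partial}$ and the pointwise norms $|\cdot|_{h^F_{\alpha,\tau}\otimes h,\omega_P}$ and $|\cdot|_{h_V\otimes h,\omega_P}$ coincide, where $h_V$ is the metric on $V$ induced by $\omega_P$ and $h^F_{\alpha,\tau}$. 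So it suffices, for $\overline{\partial}$-closed $f\in L^2_{n,q}(W^*_r,V\otimes L,h_V\otimes h,\omega_P)$, to produce $u\in L^2_{n,q-1}(W^*_r,V\otimes L,h_V\otimes h,\omega_P)$ with $\overline{\partial}u=f$.

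Next I would assemble the curvature lower bound on $(n,q)$-forms. By Lemma \ref{Lemma positivity of h_V}, $A^{n,q}_{V,h_V,\omega_P}\ge C_q>0$ on $W^*_r$ for $\alpha$ large. Since $i\Theta_{L,h}\ge 0$ in the sense of currents and, on $(n,q)$-forms, a singular semi-positive line bundle contributes a non-negative curvature operator, the full curvature operator $B:=[i\Theta_{V,h_V}\otimes\mathrm{id}_L+\mathrm{id}_V\otimes i\Theta_{L,h},\Lambda_{\omega_P}]$ dominates $C_q\,\mathrm{id}$, so $\langle B^{-1}f,f\rangle\le C_q^{-1}|f|^2$ and $\int_{W^*_r}\langle B^{-1}f,f\rangle_{h_V\otimes h,\omega_P}\,dV_{\omega_P}\le C_q^{-1}\|f\|^2<+\infty$ because $f\in L^2$. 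To make sense of the Bochner--Kodaira machinery with the singular weight, write $h=e^{-\varphi}$ with $\varphi$ plurisubharmonic on $W$ and bounded above (as $\inf_W h>0$), regularize by smooth plurisubharmonic $\varphi_\nu\downarrow\varphi$, and run the estimate with $h_\nu=e^{-\varphi_\nu}$, for which $i\Theta_{L,h_\nu}=i\partial\overline{\partial}\varphi_\nu\ge 0$ is a smooth semi-positive form and the curvature bound $B_\nu\ge C_q\,\mathrm{id}$ persists.

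Then I would apply Demailly's $L^2$-existence theorem for $\overline{\partial}$ on $W^*_r$. Although $\omega_P$ is not complete near the outer boundary $\{|z_j|=r\}$, the manifold $W^*_r=(\Delta^*_r)^t\times(\Delta_r)^{n-t}$ carries a complete Kähler metric $\widehat{\omega}$ (a product of complete hyperbolic metrics on the factors), so the theorem applies in the form requiring only the existence of some complete Kähler metric: one solves with estimates in $\omega_P+\varepsilon\widehat{\omega}$ and lets $\varepsilon\to0$, obtaining $u_\nu$ with $\overline{\partial}u_\nu=f$ and $\int_{W^*_r}|u_\nu|^2_{h_V\otimes h_\nu,\omega_P}\,dV_{\omega_P}\le C_q^{-1}\int_{W^*_r}|f|^2_{h_V\otimes h_\nu,\omega_P}\,dV_{\omega_P}\le C_q^{-1}\int_{W^*_r}|f|^2_{h_V\otimes h,\omega_P}\,dV_{\omega_P}$, using $\varphi_\nu\ge\varphi$. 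A weak-limit and diagonal argument together with monotone convergence (exactly as in the proof of Nadel's vanishing theorem) then yields $u\in L^2_{n,q-1}(W^*_r,V\otimes L,h_V\otimes h,\omega_P)$ with $\overline{\partial}u=f$; transporting back through the isomorphism of the first step gives the desired $u\in L^2_{p,q-1}(W^*_r,F\otimes L,h^F_{\alpha,\tau}\otimes h,\omega_P)$, and any mild shrinking of $r$ incurred by the regularization is harmless since exactness of the complex is only needed at the level of germs. I expect the main obstacle to be precisely the bookkeeping in combining the two limiting processes — the auxiliary complete metric $\varepsilon\to0$ and the regularization $\nu\to\infty$ — so that the uniform $L^2$-bound, stated against the singular metric $h_V\otimes h$, survives both passages to the limit.
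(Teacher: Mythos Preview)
Your proposal is correct and follows essentially the same route as the paper: reduce to $(n,q)$-forms via the identification with $V=\Omega^p_Y\otimes K^{-1}_Y\otimes F|_Y$, use the Nakano positivity bound $A^{n,q}_{V,h_V,\omega_P}\geq C_q$ from Lemma~\ref{Lemma positivity of h_V}, and solve $\overline{\partial}$ on the Stein domain $W^*_r$ (which carries a complete K\"ahler metric) with the singular weight $h$. The only cosmetic difference is that the paper invokes a packaged $L^2$-estimate for a pseudo-effective line bundle twist ([Wat22b,\,Theorem\,4.3], cf.\ Theorem~\ref{L2-estimate of k-posi + psef} and Remark~\ref{Rem for L-estimate k-posi + psef}), with the curvature operator $B_{h_V,\omega_P}=[i\Theta_{V,h_V}\otimes\mathrm{id}_L,\Lambda_{\omega_P}]$ built from the smooth factor alone, whereas you unpack that black box by hand via the regularization $\varphi_\nu\downarrow\varphi$ and the auxiliary complete metric; the content is the same.
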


\begin{proof}
    We can take $r$ so that $F,L$ and $T_Y$ are trivial over $W^*_r$. 
    Then we get 
    \begin{align*}
        L^2_{p,q}(W^*_r,F\otimes L,h^F_{\alpha,\tau}\otimes h,\omega_P)\cong L^2_{n,q}(W^*_r,V\otimes L,h_V\otimes h,\omega_P). 
    \end{align*}

    By Steinness of $W^*_r$, the set $W^*_r$ has a complete \kah metric.
    Since Nakano positivity of $h_V$ and [Wat22b,\,Theorem\,4.3], there exists $u\in L^2_{n,q-1}(W^*_r,V\otimes L,h_V\otimes h,\omega_P)\cong L^2_{p,q-1}(W^*_r,F\otimes L,h^F_{\alpha,\tau}\otimes h,\omega_P)$ such that $\overline{\partial}u=f$ and
    \begin{align*}
        \int_{W^*_r}|u|^2_{h_V\otimes h,\omega_P}dV_{\omega_P}\leq\int_{W^*_r}\langle B_{h_V,\omega_P}^{-1}f,f\rangle_{h_V\otimes h,\omega_P}dV_{\omega_P}\leq\frac{1}{C_q}\int_{W^*_r}|f|^2_{h_V\otimes h}dV_{\omega_P}<+\infty,
    \end{align*}
    where $B_{h_V,\omega_P}:=[i\Theta_{V,h_V}\otimes\mathrm{id}_L,\Lambda_{\omega_P}]\geq C_q$ by Lemma \ref{Lemma positivity of h_V}.
    Hence, the sequence of $(\ast)$ on $D$ is exact at $q\geq1$.
\end{proof}


Given the $L^2$-type Log Dolbeault sheaf resolution $(\ast)$, the following $L^2$-type Log Dolbeault isomorphisms
\begin{align*}
    H^q(X,\Omega_X^p(\log D)\otimes F\otimes L \otimes \mathscr{I}(h))\cong H^q(\Gamma(X,\mathscr{L}^{p,\ast}_{F\otimes L,h^F_Y\otimes h,\omega_P})). 
\end{align*}
are clear. The proof of Theorem \ref{L2-type Log Dolbeault isomorphism with sHm line} is completed. \qed

\subsection{An $L^2$-type Dolbeault isomorphism for logarithmic sheaves twisted by $L^2$-subsheaves} 

We prove the following $L^2$-type Dolbeault sheaf resolution using the same approach as in the previous subsection.

\begin{theorem}\label{L2-type Log Dolbeault isomorphism with sHm vector}
    Let $(X,\omega)$ be a compact \kah manifold, $D=\sum^s_{j=1}D_j$ be a simple normal crossing divisor in $X$ and $\omega_P$ be a smooth \kah metric on $Y:=X\setminus D$ which is of Poincar\'e type along $D$. 
    Let $(F,h_F)$ be a holomorphic vector bundle over $X$, $E$ be a holomorphic vector bundle over $X$ equipped with a singular Hermitian metric $h$ and $\sigma_j$ be the defining section of $D_j$. Fix smooth Hermitian metrics $||\bullet||_{D_j}$ on $\mathcal{O}_X(D_j)$.
    We assume that $h$ is Griffiths semi-positive.

    Then for any enough large integer $\alpha>0$ there exists a smooth Hermitian metric 
    \begin{align*}
        h^F_Y=h_F\prod^s_{j=1}||\sigma_j||^2_{D_j}(\log ||\sigma_j||^2_{D_j})^{2\alpha}
    \end{align*}
    on $F|_Y$ such that the sheaf $\Omega_X^p(\log D) \otimes\mathcal{O}_X(F)\otimes \mathscr{E}(h\otimes\mathrm{det}\,h)$ over $X$ enjoys a fine resolution 
    given by the $L^2$-Dolbeault complex $(\mathscr{L}^{p,\ast}_{F\otimes E\otimes\mathrm{det}\,E,h^F_Y\otimes h\otimes\mathrm{det}\,h,\omega_P},\overline{\partial})$.

    Moreover, if there exists $0<\delta\leq1$ such that $\nu(-\log\mathrm{det}\,h,x)<\delta$ for all points in $D$ then for any enough large integer $\alpha>0$ and any $\tau_j\in(\delta,1]$ there exists a smooth Hermitian metric 
    \begin{align*}
        h^F_Y=h_F\prod^s_{j=1}||\sigma_j||^{2\tau_j}_{D_j}(\log ||\sigma_j||^2_{D_j})^{2\alpha}
    \end{align*}
    on $F|_Y$ such that the sheaf $\Omega_X^p(\log D) \otimes\mathcal{O}_X(F)\otimes \mathscr{E}(h\otimes\mathrm{det}\,h)$ over $X$ enjoys a fine resolution 
    given by the $L^2$-Dolbeault complex $(\mathscr{L}^{p,\ast}_{F\otimes E\otimes\mathrm{det}\,E,h^F_Y\otimes h\otimes\mathrm{det}\,h,\omega_P},\overline{\partial})$.

    In other words, we have the $L^2$-type Dolbeault sheaf resolution:
    \begin{align*}
        0\longrightarrow \Omega_X^p(\log D)\otimes \mathcal{O}_X(F)\otimes \mathscr{E}(h\otimes\mathrm{det}\,h) \longrightarrow \mathscr{L}^{p,\ast}_{F\otimes E\otimes\mathrm{det}\,E,h^F_Y\otimes h\otimes\mathrm{det}\,h,\omega_P} 
    \end{align*}
    where $\mathscr{L}^{p,q}_{F\otimes E\otimes\mathrm{det}\,E,h^F_Y\otimes h\otimes\mathrm{det}\,h,\omega_P}$ is a fine sheaf for any $0\leq p,q\leq n$. In particular,
    \begin{align*}
        H^q(X,\Omega_X^p(\log D)\otimes F\otimes \mathscr{E}(h\otimes\mathrm{det}\,h))\cong H^q(\Gamma(X,\mathscr{L}^{p,\ast}_{F\otimes E\otimes\mathrm{det}\,E,h^F_Y\otimes h\otimes\mathrm{det}\,h,\omega_P})). 
    \end{align*}
\end{theorem}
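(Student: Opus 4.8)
The plan is to mirror the structure of the proof of Theorem \ref{L2-type Log Dolbeault isomorphism with sHm line}, replacing the multiplier ideal sheaf $\mathscr{I}(h)$ by the $L^2$-subsheaf $\mathscr{E}(h\otimes\mathrm{det}\,h)$ and the $L^2$-estimate Theorem \ref{L2-estimate of k-posi + psef} by its vector-bundle analogue, Theorem \ref{L2-estimate of k-posi + Grif}. As before, away from $D$ the logarithmic sheaf equals $\Omega^p_X\otimes\mathcal{O}_X(F)$ and the exactness of the complex follows already from Theorem \ref{L2-type Dolbeault isomorphism with sHm}(c), so everything reduces to checking exactness of the sequence at every point $x_0\in D$. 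Fix a coordinate chart $(W;z_1,\dots,z_n)$ centered at $x_0$ along $D$ with $D=\{z_1\cdots z_t=0\}$ and $F,E$ trivial on $W$; write $h^F_Y$ in this chart as equivalent to $I^F_{\alpha,\tau}=I_F\prod_{j=1}^t|z_j|^{2\tau_j}(\log|z_j|^2)^{2\alpha}$, exactly as in Proposition \ref{L2-type Log Dol isom at q=0 with sHm line}.

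First I would handle the stalk-level ($q=0$) exactness, i.e.\ show $\ker\overline{\partial}_0=\Omega^p_X(\log D)\otimes\mathcal{O}_X(F)\otimes\mathscr{E}(h\otimes\mathrm{det}\,h)$ on $W$. Given $\overline{\partial}$-closed $\sigma\in\mathscr{L}^{p,0}(W)$, the Dolbeault--Grothendieck lemma forces the coefficients to be holomorphic on $W^*_r$, and the Laurent-expansion computation of Proposition \ref{L2-type Log Dol isom at q=0 with sHm line}(I) — which only uses the scalar weight $\prod|z_j|^{2\tau_j}(\log)^{2\alpha}$ together with $\det h$ being bounded below/above locally and the integrability of $(\det h)^{1/\delta}$ under the Lelong hypothesis $\nu(-\log\det h,x)<\delta$ — shows the coefficients extend holomorphically across $D$ with at most logarithmic poles. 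For the converse inclusion, given a section of $\Omega^p_X(\log D)\otimes\mathcal{O}_X(F)$ whose coefficients lie in $\mathscr{E}(h\otimes\mathrm{det}\,h)$, one estimates the $L^2$-norm by a Hölder argument splitting off $(\det h)^{1/\delta}$ (using Skoda, when the Lelong condition is assumed) or, in the case $\tau_j=1$, directly using that $|\sigma_{I\lambda}|^2_{h\otimes\det h}$ is locally integrable and $\inf_{W_r}(\log|z_j|^2)^{2\alpha}>1$; the hypothesis $\tau_j>\delta$ (resp.\ $\tau_j=1$) makes the remaining one-variable integrals $\int_0^r r^{2q(\tau_j-1)+1}(-\log r)^{\cdots}\,dr$ converge by Example \ref{Example of integral}. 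The only genuinely new point here versus the line-bundle case is bookkeeping: $h\otimes\det h$ replaces $h$, and $\mathscr{E}(h\otimes\det h)$ is coherent by Griffiths semi-positivity (\cite{HI20}, \cite{Ina22}).

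For the higher-degree exactness ($q\geq1$) at points of $D$, I would again introduce on $W^*_r$ the modified smooth metric $h^F_{\alpha,\tau}=h_Fe^{-3\alpha|z|^2}\prod_{j=1}^t|z_j|^{2\tau_j}(\log|z_j|^2)^{2\alpha}$, equivalent to $h^F_Y$ there, and re-run the two curvature lemmas: first that $i\Theta_{F,h^F_{\alpha,\tau}}\geq_{Nak}2\alpha\,\omega_P\otimes\mathrm{id}_F$ for $\alpha$ large (identical computation), and then the analogue of Lemma \ref{Lemma positivity of h_V} giving that $V:=\Omega^p_Y\otimes K_Y^{-1}\otimes F|_Y$ with the metric induced by $\omega_P$ and $h^F_{\alpha,\tau}$ is Nakano positive with $A^{n,q}_{V,h_V,\omega_P}\geq C_q>0$ on $W^*_r$. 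Since $W^*_r$ is Stein hence weakly pseudoconvex and carries a complete Kähler metric, and since $\mathcal{O}_{W^*_r}$ trivially admits a positive line bundle (e.g.\ by shrinking one can even arrange a product of disks, or invoke Steinness directly), Theorem \ref{L2-estimate of k-posi + Grif} applies with $A$ replaced by $V$ (a $k$-positive, indeed Nakano-positive, bundle), $E$ keeping its Griffiths semi-positive metric $h$, and the twist $E\otimes\det E$ with $h\otimes\det h$: every $\overline{\partial}$-closed $f\in L^2_{n,q}(W^*_r,V\otimes E\otimes\det E,h_V\otimes h\otimes\det h,\omega_P)$ is $\overline{\partial}$-exact, and transporting back through $L^2_{p,q}(W^*_r,F\otimes E\otimes\det E,\dots)\cong L^2_{n,q}(W^*_r,V\otimes E\otimes\det E,\dots)$ gives the desired local solvability. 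Combined with $q=0$ this yields the fine resolution $(\ast)$, and the Dolbeault-type isomorphism for $H^q$ follows formally.

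The main obstacle I anticipate is making the hypotheses of Theorem \ref{L2-estimate of k-posi + Grif} fit: that theorem is stated for a weakly pseudoconvex Kähler manifold with an \emph{effective divisor whose associated line bundle is positive} and with a genuine $k$-positive line bundle $(A,h_A)$, whereas here the relevant "base" is the open polydisk-type chart $W^*_r$, the "positive line bundle" must be manufactured (trivial line bundle with a strictly plurisubharmonic weight, available since $W^*_r$ is Stein), and $A$ is really the Nakano-positive bundle $V$ rather than a line bundle — so I would either quote the more flexible version underlying it (the curvature-operator positivity $A^{n,q}_{V,h_V,\omega_P}>0$ is exactly what the $L^2$-estimate needs) or note that \cite{Wat22b}'s estimate is proved for this operator-positivity hypothesis, as in the line-bundle argument where $[\text{Wat22b, Theorem 4.3}]$ is cited. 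A secondary subtlety is the Lelong-number condition: here it is imposed on $\nu(-\log\det h,x)$ rather than on $\nu(-\log h,x)$ for a line bundle, and one must check (via $0<\det h<+\infty$ a.e.\ and Skoda) that $(\det h)^{1/\delta}\in L^1_{loc}$ near each point of $D$, which is what drives the freedom $\tau_j\in(\delta,1]$; verifying that $h$ itself (a non-negative Hermitian form bounded by $\det h$ up to the cofactor matrix, which is holomorphic-section-bounded) does not spoil the integrability estimates is the remaining routine check.
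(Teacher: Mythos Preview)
Your overall plan---mirror the line-bundle argument of Theorem \ref{L2-type Log Dolbeault isomorphism with sHm line}, handle $q=0$ by a vector-bundle version of Proposition \ref{L2-type Log Dol isom at q=0 with sHm line}, and handle $q\geq1$ by the Nakano-positivity lemmas together with an $L^2$-estimate for $h\otimes\det h$ (via its $L^2$-type Nakano semi-positivity, cf.\ [Ina22,\,Theorem\,1.3] and [Wat22b,\,Theorem\,4.8])---is exactly the paper's approach. The $q\geq1$ discussion, including the hypothesis-matching issue you flag, is handled just as you suggest.

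However, at $q=0$ you underestimate two vector-bundle-specific steps and your proposed arguments would not close. First, in the direction $\ker\overline\partial_0\subset\Omega^p_X(\log D)\otimes\mathcal O_X(F)\otimes\mathscr E(h\otimes\det h)$, knowing ``$\det h$ bounded below'' does \emph{not} by itself let you extract a lower bound $|\sigma_{I\lambda}|^2_{h\otimes\det h}\geq c\,|\sigma_{I\lambda\mu}|^2$ componentwise: for a singular matrix-valued $h$, individual diagonal entries of $\det h\cdot h$ need not be bounded away from $0$. The paper supplies the missing ingredient by approximating $h$ from below by an increasing sequence of \emph{smooth} Griffiths semi-positive metrics $(h_\nu)$ (cf.\ [Wat22b,\,Proposition 3.15]); then $|\sigma_{I\lambda}|^2_{h\otimes\det h}\geq |\sigma_{I\lambda}|^2_{h_\nu\otimes\det h_\nu}\geq (\inf h_\nu^{\mu\mu})(\inf\det h_\nu)\,|\sigma_{I\lambda\mu}|^2$, after which your Laurent argument goes through.

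Second, in the reverse inclusion with $\tau_j=1$, you cannot ``directly use that $|\sigma_{I\lambda}|^2_{h\otimes\det h}$ is locally integrable and $\inf(\log|z_j|^2)^{2\alpha}>1$'': that inequality points the wrong way. After absorbing $\omega_P^n$, the weight carries a factor $\prod_j(\log|z_j|^2)^{2(\alpha+\cdots-1)}$ which is \emph{unbounded} near $D$ (for $\alpha$ large), so mere membership in $\mathscr E(h\otimes\det h)$ is insufficient. The paper writes $h=\det h\cdot\widehat{h^*}$ with $\widehat{h^*}$ locally bounded (Griffiths semi-negativity of $h^*$, [PT18,\,Lemma 2.2.4]), so $|\sigma_{I\lambda}|^2_{h\otimes\det h}=(\det h)^2\cdot(\text{bounded})$; then it invokes the \emph{strong openness property for vector bundles} [LYZ21] to gain $\int|\sigma_{I\lambda}|^2_{h\otimes(\det h)^{1+\varepsilon}}<\infty$ for some $\varepsilon>0$, and only then does H\"older close the estimate. (Relatedly, under the Lelong hypothesis $\nu(-\log\det h,x)<\delta$ the integrable weight is $(\det h)^{2/\delta}$, not $(\det h)^{1/\delta}$, because of the $(\det h)^2$ factor.)
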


\begin{proposition}\label{L2-type Log Dol isom at q=0 with sHm vector}
    Let $X,Y,D,\omega,\omega_P,F,h_F,E,h,\sigma_j$ and $||\bullet||_{D_j}$ be as in above.
    We assume that $h$ is Griffiths semi-positive. 

    Then for any integer $\alpha>0$ there exists a smooth Hermitian metric 
    \begin{align*}
        h^F_Y=h_F\prod^s_{j=1}||\sigma_j||^2_{D_j}(\log ||\sigma_j||^2_{D_j})^{2\alpha}
    \end{align*}
    on $F|_Y$ such that the following $L^2$-Dolbeault complex $(\mathscr{L}^{p,\ast}_{F\otimes E\otimes\mathrm{det}\,E,h^F_Y\otimes h\otimes\mathrm{det}\,h,\omega_P},\overline{\partial})$ over $X$ is exact at $q=0$
    \begin{align*}
        0\longrightarrow \Omega_X^p(\log D)\otimes \mathcal{O}_X(F)\otimes \mathscr{E}(h\otimes\mathrm{det}\,h) \longrightarrow \mathscr{L}^{p,\ast}_{F\otimes E\otimes\mathrm{det}\,E,h^F_Y\otimes h\otimes\mathrm{det}\,h,\omega_P}, 
    \end{align*}
    i.e. $\mathrm{ker}\,\overline{\partial}_0=\Omega_X^p(\log D)\otimes \mathcal{O}_X(F)\otimes \mathscr{E}(h\otimes\mathrm{det}\,h)$.

    Moreover, if there exists $0<\delta\leq1$ such that $\nu(-\log\mathrm{det}\,h,x)<\delta$ for all points in $D$ then for any integer $\alpha>0$ and any $\tau_j\in(\delta,1]$ there exists a smooth Hermitian metric 
    \begin{align*}
        h^F_Y=h_F\prod^s_{j=1}||\sigma_j||^{2\tau_j}_{D_j}(\log ||\sigma_j||^2_{D_j})^{2\alpha}
    \end{align*}
    on $F|_Y$ such that $L^2$-Dolbeault complex $(\mathscr{L}^{p,\ast}_{F\otimes E\otimes\mathrm{det}\,E,h^F_Y\otimes h\otimes\mathrm{det}\,h,\omega_P},\overline{\partial})$ is exact at $q=0$.
\end{proposition}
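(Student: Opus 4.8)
The plan is to transcribe the proof of Proposition~\ref{L2-type Log Dol isom at q=0 with sHm line} almost verbatim, with the pair $(L,h)$ replaced by $(G,h_G):=(E\otimes\det E,\ h\otimes\det h)$ and the psh weight $-\log h$ replaced by $\varphi:=-\log\det h$. First I reduce to a local statement: away from $D$ one has $\Omega^p_X(\log D)=\Omega^p_X$, and since $\omega_P$ is a smooth \kah metric on relatively compact pieces of $Y$, exactness of $(\mathscr{L}^{p,\ast}_{F\otimes E\otimes\det E,\,h^F_Y\otimes h\otimes\det h,\omega_P},\overline\partial)$ at $q=0$ there is exactly Theorem~\ref{L2-type Dolbeault isomorphism with sHm}(c); so it suffices to treat a chart $W=\Delta^n_r$ around $x_0\in D$ with $D=\{z_1\cdots z_t=0\}$ and $F,E$ trivialized. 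From Griffiths semi-positivity of $h$ I extract, on $W$: $\det h$ is a singular semi-positive metric on $\det E$, so $\varphi$ is psh with $\sup_W\varphi<+\infty$; and, since $h^{\ast}$ is Griffiths semi-negative, the functions $\|e_i^{\ast}\|^2_{h^{\ast}}$ are psh, hence locally bounded from above, whence $h\ge c\,I_E$ on $W$ for some $c>0$ (after shrinking $W$). Since moreover $\lambda_{\max}(h)\le c^{1-r}\det h$, this gives the pointwise sandwich, call it $(\star)$, $\ c\,e^{-\varphi}|s|^2\le|s|^2_{h_G}\le c^{1-r}e^{-2\varphi}|s|^2\ $ for local holomorphic sections $s$ of $G$ in the trivial frame. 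The lower half of $(\star)$ together with $\sup_W\varphi<+\infty$ plays the role of ``$\inf_W h>0$'' in the line-bundle proof.

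For the inclusion $\mathrm{ker}\,\overline\partial_0\subseteq\Omega^p_X(\log D)\otimes\mathcal{O}_X(F)\otimes\mathscr{E}(h_G)$ on $W$: a $\overline\partial$-closed section $\sigma$ of $\mathscr{L}^{p,0}_{F\otimes E\otimes\det E,\,h^F_Y\otimes h\otimes\det h,\omega_P}$ over $W$ is written $\sigma=\sum_{|I|=p,\lambda}z_{I\cap\{1,\ldots,t\}}^{-1}\sigma_{I\lambda}\,dz_I\otimes b_\lambda$ with $\sigma_{I\lambda}$ a $G$-valued function, holomorphic on $W^{\ast}_r$ by the Dolbeault--Grothendieck lemma. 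By the lower bound in $(\star)$, $|\sigma|^2_{h^F_Y\otimes h_G}$ is bounded below by a positive constant times the same purely geometric weight appearing in the proof of Proposition~\ref{L2-type Log Dol isom at q=0 with sHm line} times $\sum_\mu|(\sigma_{I\lambda})_\mu|^2$; so $L^2$-finiteness gives, for all $I,\lambda,\mu$, the integrability $\int_{W^{\ast}_r}|(\sigma_{I\lambda})_\mu|^2\prod_j|z_j|^{2(\tau_j-1)}(\log|z_j|^2)^{2m_{I,j}}\,dV_{\mathbb{C}^n}<+\infty$ with each $m_{I,j}\ge0$ (for $\alpha\ge1$), and the Laurent-expansion argument of Proposition~\ref{L2-type Log Dol isom at q=0 with sHm line} together with Example~\ref{Example of integral} and $\tau_j\in(0,1]$ forces every $(\sigma_{I\lambda})_\mu$ to extend holomorphically across $D$; hence $\sigma$ is a section of $\Omega^p_X(\log D)\otimes\mathcal{O}_X(F\otimes G)$. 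To locate the coefficients in $\mathscr{E}(h_G)$: if $\tau_j=1$ (no Lelong hypothesis) I discard the factors $(\log|z_j|^2)^{2m_{I,j}}\ge1$ (valid for $\alpha\ge1$, $r<1/2$) to get $\int_{W_r}|\sigma_{I\lambda}|^2_{h_G}\,dV_{\mathbb{C}^n}<+\infty$, i.e. $\sigma_{I\lambda}\in\mathscr{E}(h_G)$; if $\nu(\varphi,x)<\delta\le1$ on $D$, then Skoda's theorem makes $e^{-2\varphi}$ integrable near $x_0$, so the upper bound in $(\star)$ gives $\mathscr{E}(h_G)=\mathcal{O}_X(G)$ near $x_0$ and there is nothing to check. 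This half is a mechanical transcription of Proposition~\ref{L2-type Log Dol isom at q=0 with sHm line}(I).

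For the reverse inclusion I must show, for $\sigma=\sum z_{I\cap\{1,\ldots,t\}}^{-1}\sigma_{I\lambda}\,dz_I\otimes b_\lambda$ with $\sigma_{I\lambda}$ holomorphic on $W_r$ and $|\sigma_{I\lambda}|^2_{h_G}$ locally integrable, that $\int_{W^{\ast}_{r'}}|\sigma_{I\lambda}|^2_{h_G}\prod_j|z_j|^{2(\tau_j-1)}(\log|z_j|^2)^{2m_{I,j}}\,dV_{\mathbb{C}^n}<+\infty$ for some $r'<r$. When $\nu(\varphi,x)<\delta<1$ on $D$ I mimic the last case of Proposition~\ref{L2-type Log Dol isom at q=0 with sHm line}(II): bound $|\sigma_{I\lambda}|^2_{h_G}\le c^{1-r}e^{-2\varphi}|\sigma_{I\lambda}|^2$ and apply H\"older with conjugate exponents $1/\delta$ and $1/(1-\delta)$, peeling off $(e^{-2\varphi/\delta})^{\delta}$, which is integrable by Skoda since $\nu(\varphi)<\delta$, while the remaining geometric factor is integrable by Example~\ref{Example of integral} precisely because $\tau_j>\delta$. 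When $\tau_j=1$ and no Lelong hypothesis is made, the argument of the middle case of Proposition~\ref{L2-type Log Dol isom at q=0 with sHm line}(II) goes through once one separates $|\sigma_{I\lambda}|^2_{h_G}$ from the unbounded log-factors by H\"older; this requires $|\sigma_{I\lambda}|^2_{h_G}\in L^{1+\varepsilon}_{\mathrm{loc}}$ for some $\varepsilon>0$, i.e. a strong openness property for the multiplier submodule sheaf $\mathscr{E}(h\otimes\det h)$ of the Griffiths semi-positive metric $h$.

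I expect this last point to be the only genuine obstacle. Unlike the line-bundle case, where the $\mathscr{L}$-norm is literally $|f|^2e^{-\varphi}$ with $\varphi$ psh and the scalar strong openness of \cite{GZ15},\cite{JM12} applies directly, here $h\otimes\det h$ is not governed by a single psh function: the two sides of $(\star)$ differ by a whole power of $e^{-\varphi}$, so applying scalar strong openness to $\mathscr{I}(\det h)$ does not suffice once $\mathrm{rank}\,E\ge2$. One therefore invokes the strong openness of $\mathscr{E}(h\otimes\det h)$, which, together with its coherence \cite{HI20},\cite{Ina22}, is obtained from the $L^2$-estimate for Griffiths semi-positive singular metrics. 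Everything else --- the reduction away from $D$, the Laurent-series argument for logarithmic poles, the H\"older estimates, and the appeals to Skoda's theorem --- is a routine adaptation of the line-bundle proof once $(\star)$ and the plurisubharmonicity and boundedness from above of $\varphi=-\log\det h$ are in place.
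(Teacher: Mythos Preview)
Your overall plan is the same as the paper's: localize to a chart on $D$, use Dolbeault--Grothendieck plus the Laurent-series argument with Example~\ref{Example of integral} for the inclusion $\ker\overline\partial_0\subseteq\Omega^p_X(\log D)\otimes\mathcal O_X(F)\otimes\mathscr E(h\otimes\det h)$, and H\"older together with Skoda or strong openness for the reverse inclusion. Your lower bound in $(\star)$, obtained from $h\ge cI_E$ via plurisubharmonicity of $\|e_i^{\ast}\|^2_{h^{\ast}}$, is a legitimate shortcut; the paper instead passes through a smooth increasing approximation $h_\nu\nearrow h$ to reach the same pointwise lower bound on $|\sigma_{I\lambda}|^2_{h\otimes\det h}$. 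The Lelong-number cases in both directions go through exactly as you say.

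The one genuine gap is in the $\tau_j=1$ case of the reverse inclusion. Your stated requirement, $|\sigma_{I\lambda}|^2_{h_G}\in L^{1+\varepsilon}_{\mathrm{loc}}$, is \emph{not} what strong openness for $\mathscr E(h\otimes\det h)$ (in the form of \cite{LYZ21} used in the paper) delivers: that result gives $\int|\sigma_{I\lambda}|^2_{h\otimes(\det h)^{1+\varepsilon}}<\infty$, which is a different quantity. Your upper bound in $(\star)$ is too coarse to bridge this: a H\"older split based on $|\sigma|^2_{h_G}\le c^{1-r}e^{-2\varphi}|\sigma|^2$ would require $\int e^{-2p\varphi}<\infty$ for some $p>1$, i.e.\ a Lelong-number bound you do not have. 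What the paper does instead is observe that $h/\det h=\widehat{h^{\ast}}$ is Griffiths semi-\emph{negative}, so $\|\sigma_{I\lambda}\|^2_{h/\det h}$ is plurisubharmonic and hence bounded on $W_r$; then write the \emph{equality} $|\sigma_{I\lambda}|^2_{h\otimes\det h}=\|\sigma_{I\lambda}\|^2_{h/\det h}\cdot(\det h)^2$ and split $f_p=(\|\sigma_{I\lambda}\|^2_{h/\det h})^{1/p}(\det h)^2$, $g=(\|\sigma_{I\lambda}\|^2_{h/\det h})^{1-1/p}\times(\text{geometric factor})$. This makes $\int|f_p|^p=\int|\sigma_{I\lambda}|^2_{h\otimes(\det h)^{2p-1}}$, which is exactly what the vector-bundle strong openness controls, while $\int|g|^q<\infty$ because the first factor is bounded. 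So you have correctly located the only nontrivial step and the right external input, but the mechanism that makes the H\"older split mesh with strong openness is the semi-negativity of $h/\det h$, not your inequality $(\star)$.
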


Similarly to the line bundle case in $\S 3.1$, from Theorem \ref{L2-type Dolbeault isomorphism with sHm}, 
it is sufficient that Theorem \ref{L2-type Log Dolbeault isomorphism with sHm vector} and Proposition \ref{L2-type Log Dol isom at q=0 with sHm vector} show the exactness of the $L^2$-Dolbeault complex $(\mathscr{L}^{p,\ast}_{F\otimes E\otimes\mathrm{det}\,E,h^F_Y\otimes h\otimes\mathrm{det}\,h,\omega_P},\overline{\partial})$ for all points on $D$.

\begin{proof}
    The approach to the proof is the same as in Proposition \ref{L2-type Log Dol isom at q=0 with sHm line}.
    For any fixed point $x_0\in D$, let $(W;z_1,\ldots,z_n)$ be a local coordinate chart centered at $x_0$ along $D$ such that the locus of $D$ is given by $z_1\cdots z_t=0$ and that $F$ is trivial, i.e. $F|_W=W\times\mathbb{C}^r:=\underline{\mathbb{C}^r}$.
    Here, $W=W_r:=\Delta^n_r$, $Y\cap W=W\setminus D:=W^*_r=(\Delta^*_r)^t\times(\Delta_r)^{n-t}$ and $r\in(0,1)$.
    By Griffiths semi-positivity of $h$, the function $-\log\mathrm{det}\,h$ is plurisubharmonic on $W$ and $\sup_W-\log\mathrm{det}\,h<+\infty$, i.e. $\inf_W\mathrm{det}\,h>0$. 
    Let $I_F$ be a trivial Hermitian metric of $F=\underline{\mathbb{C}^r}$ on $W$ and $b=(b_1,\ldots,b_r), f=(e_1,\ldots,e_l)$ be holomorphic frames of $F, E$ on $W$ respectively where $b_j=(0,\cdots,0,1,0,\cdots,0)$ and $b$ is orthonormal with respect to $I_F$. 

    Note that the smooth Hermitian metric $h^F_{\alpha,\tau}|_{W^*_r}$ is equivalent to the following smooth Hermitian metric
    $I^F_{\alpha,\tau}:=I_F\prod^t_{j=1}|z_j|^{2\tau_j}(\log|z_j|^2)^{2\alpha}$ on $W^*_r$,
    where $h_F\sim I_F$ and that $\mathscr{L}^{p,q}_{F\otimes E\otimes\mathrm{det}\,E,h^F_Y\otimes h\otimes\mathrm{det}\,h,\omega_Y}(W)= \mathscr{L}^{p,q}_{\underline{\mathbb{C}^r}\otimes E\otimes\mathrm{det}\,E,h^F_\alpha\otimes h\otimes\mathrm{det}\,h,\omega_Y}(W)$.
    Denote $e_{det}:=\wedge^l_{j=1}e_j$, $\zeta_j=\frac{1}{z_j}dz_j$ for $1\leq j\leq t$ and $\zeta_j=dz_j$ for $t+1\leq j\leq n$.

    (I) We prove $\mathrm{ker}\,\overline{\partial}_0\subset \Omega_X^p(\log D)\otimes \mathcal{O}_X(F)\otimes \mathscr{E}(h\otimes\mathrm{det}\,h)$, i.e. for any $\sigma\in \mathscr{L}^{p,0}_{F\otimes E\otimes\mathrm{det}\,E,}$ $_{h^F_Y\otimes h\otimes\mathrm{det}\,h,\omega_Y}(W)$ such that $\overline{\partial}\sigma=0$, we get $\sigma\in(\Omega_X^p(\log D)\otimes\mathcal{O}_X(F)\otimes\mathscr{E}(h\otimes\mathrm{det}\,h))(W)$.
    We can write 
    \begin{align*}
        \sigma(z)&=\sum_{|I|=p,\lambda,\mu}\sigma_{I\lambda\mu}(z)\zeta_I\otimes b_\lambda\otimes e_\mu\otimes e_{det}=\sum_{|I|=p,\lambda}\sigma_{I\lambda}(z)\zeta_I\otimes b_\lambda\\
        &=\sum_{|I|=p,\lambda}\frac{\sigma_{I\lambda\mu}(z)}{z_{I\cap\{1,\ldots,t\}}}dz_I\otimes b_\lambda\otimes e_\mu\otimes e_{det},
    \end{align*}
    where $\sigma_{I\lambda}(z)=\sum_\mu \sigma_{I\lambda\mu}(z)e_\mu\otimes e_{det}\in H^0(W,E\otimes\mathrm{det}\,E)$. 
    By Dolbeault-Grothendieck lemma (see [Dem-book,\,ChapterI]), $\sigma_{I\lambda\mu}(z)/z_{I\cap\{1,\ldots,t\}}$ is a holomorphic function on $W^*_r$.
    Then $\sigma_{I\lambda\mu}(z)$ is a holomorphic function on $W^*_r$.

    First, we show the case assuming $\nu(-\log\mathrm{det}\,h,x)<2$ for all points in $D$.
    If we denote $I\cap\{1,\ldots,t\}=\{i_{p1},\ldots,i_{pb}\}$, then we have that $\sigma\in \mathscr{L}^{p,0}_{F\otimes E\otimes\mathrm{det}\,E,h^F_Y\otimes h\otimes\mathrm{det}\,h,\omega_Y}(W_r) \iff $ 
    \begin{align*}
        +\infty>&||\sigma||^2_{I^F_{\alpha,\tau}\otimes h\otimes\mathrm{det}\,h,\omega_P}\Big|_{W^*_r}=\int_{W^*_r}|\sigma|^2_{I^F_{\alpha,\tau}\otimes h\otimes\mathrm{det}\,h,\omega_P}\omega_P^n\\
        =&\sum_{I,\lambda}\int_{W^*_r}|\sigma_{I\lambda}(z)|^2_{h\otimes\mathrm{det}\,h}\prod^t_{j=1}|z_j|^{2(\tau_j-1)}(\log|z_j|^2)^{2(\alpha+\sum^b_{\nu=1}\delta_{ji_{p\nu}}-1)}dV_{\mathbb{C}^n},
    \end{align*}
    where $\omega^n_P=\Bigl(\prod^t_{k=1}|z_k|^2(\log|z_k|^2)^2\Bigr)^{-1}dV_{\mathbb{C}^n}$. Then for any $I$ and $\lambda$, we get 
    \begin{align*}
        \int_{W^*_r}|\sigma_{I\lambda}(z)|^2_{h\otimes\mathrm{det}\,h}\prod^t_{j=1}|z_j|^{2(\tau_j-1)}(\log|z_j|^2)^{2(\alpha+\sum^b_{\nu=1}\delta_{ji_{p\nu}}-1)}dV_{\mathbb{C}^n}<+\infty.
    \end{align*}

    Since Griffiths semi-positivity of $h$, there exists a sequence of smooth Griffiths semi-positive Hermitian metrics $(h_\nu)_{\nu\in\mathbb{N}}$ increasing to $h$ a.e. pointwise on $W$ (see [Wat22b, \,Proposition\,3.15]).
    Therefore, for any $I,\lambda$ and $\mu$, we have that 
    \begin{align*}
        +\infty&>\int_{W^*_r}|\sigma_{I\lambda}(z)|^2_{h\otimes\mathrm{det}\,h}\prod^t_{j=1}|z_j|^{2(\tau_j-1)}(\log|z_j|^2)^{2(\alpha+\sum^b_{\nu=1}\delta_{ji_{p\nu}}-1)}dV_{\mathbb{C}^n}\\
        &\geq\int_{W^*_r}|\sigma_{I\lambda}(z)|^2_{h_\nu\otimes\mathrm{det}\,h_\nu}\prod^t_{j=1}|z_j|^{2(\tau_j-1)}(\log|z_j|^2)^{2(\alpha+\sum^b_{\nu=1}\delta_{ji_{p\nu}}-1)}dV_{\mathbb{C}^n}\\
        &=\int_{W^*_r}\Bigl(\sum_{k,l}\overline{\sigma}_{I\lambda k}(z)h_\nu^{kl}\sigma_{I\lambda l}(z)\Bigr)\mathrm{det}\,h_\nu\prod^t_{j=1}|z_j|^{2(\tau_j-1)}(\log|z_j|^2)^{2(\alpha+\sum^b_{\nu=1}\delta_{ji_{p\nu}}-1)}dV_{\mathbb{C}^n}\\
        &\geq\int_{W^*_r}h^{\mu\mu}_\nu\mathrm{det}\,h_\nu|\sigma_{I\lambda\mu}(z)|^2\prod^t_{j=1}|z_j|^{2(\tau_j-1)}(\log|z_j|^2)^{2(\alpha+\sum^b_{\nu=1}\delta_{ji_{p\nu}}-1)}dV_{\mathbb{C}^n}\\
        &\geq C\int_{W^*_r}|\sigma_{I\lambda\mu}(z)|^2\prod^t_{j=1}|z_j|^{2(\tau_j-1)}(\log|z_j|^2)^{2(\alpha+\sum^b_{\nu=1}\delta_{ji_{p\nu}}-1)}dV_{\mathbb{C}^n},
    \end{align*}
    where $h_\nu=(h^{kl}_\nu)_{1\leq k,l\leq\mathrm{rank}\,E}$ and $C:=(\inf_{W_r}h^{\mu\mu}_\nu)(\inf_{W_r}\mathrm{det}\,h_\nu)>0$. 

    Suppose that the Laurent series representation of $\sigma_{I\lambda\mu}(z)$ on $W^*_r$ is given by 
    \begin{align*}
        \sigma_{I\lambda\mu}(z)=\sum^\infty_{\beta=-\infty}\sigma_{I\lambda\mu\beta}(z_{t+1},\ldots,z_n)z^{\beta_1}_1\cdots z^{\beta_t}_t, \quad \beta=(\beta_1,\ldots,\beta_t)
    \end{align*}
    where $\sigma_{I\lambda\mu\beta}(z_{t+1},\ldots,z_n)$ is a holomorphic function on $\Delta^{n-t}_r$.
    Then by using polar coordinate, Fubini's theorem and Example \ref{Example of integral}, we see that if $\sigma$ is $L^2$-integrable on $W^*_r$ then $\beta_j>-\tau_j$ along $D_j$.
    In fact, 
    \begin{align*}
        +\infty&>\int_{W^*_r}|\sigma_{I\lambda\mu}(z)|^2\prod^t_{j=1}|z_j|^{2(\tau_j-1)}(\log|z_j|^2)^{\alpha+2(\sum^b_{\nu=1}\delta_{ji_{p\nu}}-1)}dV_{\mathbb{C}^n}\\
        &\geq\int_{W^*_r}|\sigma_{I\lambda\mu\beta}(z)|^2\prod^t_{j=1}|z_j|^{2(\tau_j+\beta_j-1)}(\log|z_j|^2)^{\alpha+2(\sum^b_{\nu=1}\delta_{ji_{p\nu}}-1)}dV_{\mathbb{C}^n}\\
        &= c\int_{\Delta^t_r}\prod^t_{j=1}|z_j|^{2(\tau_j+\beta_j-1)}(\log|z_j|^2)^{\alpha+2(\sum^b_{\nu=1}\delta_{ji_{p\nu}}-1)}dV_{\mathbb{C}^t}\\
        &=c\prod^t_{j=1}2\pi\int^r_0r_j^{2(\tau_j+\beta_j-1)+1}\prod^b_{\nu=1}(2\log r_j)^{\alpha+2(\sum^b_{\nu=1}\delta_{ji_{p\nu}}-1)}dr_j,
    \end{align*} 
    where $c=\int_{\Delta^{n-t}_r}|\sigma_{I\lambda\mu\beta}|^2dV_{\mathbb{C}^{n-t}}>0$.

    Since $\tau_j\in (0,1]$, we obtain that $\beta_j\geq0$ and $\sigma_{I\lambda\mu}(z)$ has removable singularity. 
    Hence $\sigma$ and $D\sigma$ have only logarithmic pole, and $\sigma$ is a section of $\Omega_X^p(\log D)\otimes \mathcal{O}_X(F\otimes E\otimes\mathrm{det}\,E)=\Omega_X^p(\log D)\otimes \mathcal{O}_X(F)\otimes\mathscr{E}(h\otimes\mathrm{det}\,h)$ on $W_r$, 
    where $\mathcal{O}_X(E\otimes\mathrm{det}\,E)=\mathscr{E}(h\otimes\mathrm{det}\,h)$ near $x_0$ by the assumption $\nu(-\log\mathrm{det}\,h,x_0)<2$ and Skoda's result.

    Second, we show the case without the Lelong number condition, i.e. case $\tau_j=1$.
    Here, we already know that for any $\sigma\in \mathscr{L}^{p,0}_{F\otimes E\otimes\mathrm{det}\,E,h^F_Y\otimes h\otimes\mathrm{det}\,h,\omega_Y}(W)$ such that $\overline{\partial}\sigma=0$, we get $\sigma\in(\Omega_X^p(\log D)\otimes\mathcal{O}_X(F\otimes L))(W)$ by the above.
    It is sufficient that $\sigma\in(\Omega_X^p(\log D)\otimes\mathcal{O}_X(F)\otimes\mathscr{E}(h\otimes\mathrm{det}\,h))(W)$, i.e. $\sigma_{I\lambda}\in\mathscr{E}(h\otimes\mathrm{det}\,h)(W)$ for any $I$ and $\lambda$.

    From the above inequality, if $r\in(0,1/2)$ then we have that 
    \begin{align*}
        +\infty>||\sigma||^2_{I^F_\alpha\otimes h\otimes\mathrm{det}\,h,\omega_P}\Big|_{W^*_r}
        &=\sum_{I,\lambda}\int_{W^*_r}|\sigma_{I\lambda}(z)|^2_{h\otimes\mathrm{det}\,h}\prod^t_{j=1}(\log|z_j|^2)^{2(\alpha+\sum^b_{\nu=1}\delta_{ji_{p\nu}}-1)}dV_{\mathbb{C}^n}\\
        &\geq\sum_{I,\lambda}\int_{W_r}|\sigma_{I\lambda}(z)|^2_{h\otimes\mathrm{det}\,h}dV_{\mathbb{C}^n},
    \end{align*}
    where $I^F_\alpha:=I^F_{\alpha,1}=I_F\prod^t_{j=1}|z_j|^{2}(\log|z_j|^2)^{2\alpha}$ on $W^*_r$ by $\tau_j=1$ and $\inf_{W_r}(\log|z_j|^2)^{2\alpha}>1$. 

    (II) We prove $\Omega_X^p(\log D)\otimes \mathcal{O}_X(F)\otimes \mathscr{E}(h\otimes\mathrm{det}\,h)\subset \mathrm{ker}\,\overline{\partial}_0$, i.e. for any $\sigma\in(\Omega_X^p(\log D)\otimes\mathcal{O}_X(F)\otimes\mathscr{E}(h\otimes\mathrm{det}\,h))(W)$, we get $\sigma\in \mathscr{L}^{p,0}_{F\otimes E\otimes\mathrm{det}\,E,h^F_Y\otimes h\otimes\mathrm{det}\,h,\omega_Y}(W)$.
    Here, $\sigma(z)=\sum_{|I|=p,\lambda,\mu}\sigma_{I\lambda\mu}(z)\zeta_I\otimes b_\lambda\otimes e_\mu\otimes e_{det}=\sum_{|I|=p,\lambda,\mu}\frac{\sigma_{I\lambda\mu}(z)}{z_{I\cap\{1,\ldots,t\}}}dz_I\otimes b_\lambda\otimes e_\mu\otimes e_{det}$, any $\sigma_{I\lambda\mu}$ is holomorphic on $W$ and $\int_W|\sigma_{I\lambda}|^2_{h\otimes\mathrm{det}\,h}dV_{\mathbb{C}^n}<+\infty$ where $\sigma_{I\lambda}(z)=\sum_\mu \sigma_{I\lambda\mu}(z)e_\mu\otimes e_{det}\in H^0(W,E\otimes\mathrm{det}\,E)$ by the assumption.
    Then the $L^2$-norm of $\sigma$ is the following:
    \begin{align*}
        ||\sigma||^2_{I^F_{\alpha,\tau}\otimes h\otimes\mathrm{det}\,h,\omega_P}\Big|_{W^*_r}=\sum_{I,\lambda}\int_{W^*_r}|\sigma_{I\lambda}(z)|^2_{h\otimes\mathrm{det}\,h}\prod^t_{j=1}|z_j|^{2(\tau_j-1)}(\log|z_j|^2)^{2(\alpha+\sum^b_{\nu=1}\delta_{ji_{p\nu}}-1)}dV_{\mathbb{C}^n}.
    \end{align*}

    Locally, we see that $h=\mathrm{det}\,h\cdot\widehat{h^*}$, where $\widehat{h^*}$ is the adjugate matrix of $h^*$. Since Griffiths semi-negativity of $h^*$, each element $\widehat{h^*_{kl}}$ of $\widehat{h^*}=(\widehat{h^*_{kl}})_{1\leq k,l\leq \mathrm{rank}\,E}$ is locally bounded [PT18,\,Lemma\,2.2.4].
    We show that for any $I$ and $\lambda$, 
    \begin{align*}
        J_{I,\lambda}:=&\int_{W^*_r}|\sigma_{I\lambda}(z)|^2_{h\otimes\mathrm{det}\,h}\prod^t_{j=1}|z_j|^{2(\tau_j-1)}(\log|z_j|^2)^{2(\alpha+\sum^b_{\nu=1}\delta_{ji_{p\nu}}-1)}dV_{\mathbb{C}^n}\\
        =&\int_{W^*_r}\Bigl(\sum_{k,l}\overline{\sigma}_{I\lambda k}(z)\widehat{h^*_{kl}}\sigma_{I\lambda l}(z)\Bigr)(\mathrm{det}\,h)^2\prod^t_{j=1}|z_j|^{2(\tau_j-1)}(\log|z_j|^2)^{2(\alpha+\sum^b_{\nu=1}\delta_{ji_{p\nu}}-1)}dV_{\mathbb{C}^n}<+\infty.
    \end{align*}

    Here, $\sum_{k,l}\overline{\sigma}_{I\lambda k}(z)\widehat{h^*_{kl}}\sigma_{I\lambda l}(z)\geq0$.
    Let $f_p:=(\sum_{k,l}\overline{\sigma}_{I\lambda k}(z)\widehat{h^*_{kl}}\sigma_{I\lambda l}(z))^{1/p}(\mathrm{det}\,h)^2$ and $g:=(\sum_{k,l}\overline{\sigma}_{I\lambda k}(z)\widehat{h^*_{kl}}\sigma_{I\lambda l}(z))^{1-1/p}\prod^t_{j=1}|z_j|^{2(\tau_j-1)}(-\log|z_j|^2)^{2(\alpha+\sum^b_{\nu=1}\delta_{ji_{p\nu}}-1)}$.
    By the H\"older's inequality, for any $p>1$ we get 
    \begin{align*}
        J_{I,\lambda}=\int_{W^*_r}|f_pg|dV_{\mathbb{C}^n}\leq\Bigl(\int_{W^*_r}|f_p|^pdV_{\mathbb{C}^n}\Bigr)^{1/p}\cdot\Bigl(\int_{W^*_r}|g|^qdV_{\mathbb{C}^n}\Bigr)^{1/q},
    \end{align*}
    where $1=1/p+1/q$ and get
    \begin{align*}
        \int_{W^*_r}|f_p|^pdV_{\mathbb{C}^n}=\int_{W^*_r}\Bigl(\sum_{k,l}\overline{\sigma}_{I\lambda k}(z)\widehat{h^*_{kl}}\sigma_{I\lambda l}(z)\Bigr)(\mathrm{det}\,h)^{2p}dV_{\mathbb{C}^n}
        =\int_{W^*_r}|\sigma_{I\lambda}|^2_{h\otimes(\mathrm{det}\,h)^{1+2(p-1)}}dV_{\mathbb{C}^n}
    \end{align*}

    From $\sigma_{I\lambda}\in\mathscr{E}(h\otimes\mathrm{det}\,h)(W)$ and the strong openness property on vector bundles (see \cite{LYZ21}) and Nakano semi-positivity of $h\otimes\mathrm{det}\,h$, for some $r'\in(0,r)$ there exists $\varepsilon>0$ such that $\int_{W_{r'}}|\sigma_{I\lambda}|^2_{h\otimes(\mathrm{det}\,h)^{1+\varepsilon}}dV_{\mathbb{C}^n}<+\infty$. 
    Then we consider the integral of $g$.
    \begin{align*}
        \int_{W^*_r}|g|^qdV_{\mathbb{C}^n}
        &=\int_{W^*_r}\Bigl(\sum_{k,l}\overline{\sigma}_{I\lambda k}(z)\widehat{h^*_{kl}}\sigma_{I\lambda l}(z)\Bigr)\prod^t_{j=1}|z_j|^{2q(\tau_j-1)}(-\log|z_j|^2)^{2q(\alpha+\sum^b_{\nu=1}\delta_{ji_{p\nu}}-1)}dV_{\mathbb{C}^n}\\
        &\leq C(\pi r^2)^{n-t+1}\prod^t_{j=1}\int_{|z_j|<r}|z_j|^{2q(\tau_j-1)}(-\log|z_j|^2)^{2q(\alpha+\sum^b_{\nu=1}\delta_{ji_{p\nu}}-1)}dz_j\wedge d\overline{z}_j\\
        &=C(\pi r^2)^{n-t+1}\prod^t_{j=1}2\pi\int^r_0r_j^{2q(\tau_j-1)+1}(-\log r_j^2)^{2q(\alpha+\sum^b_{\nu=1}\delta_{ji_{p\nu}}-1)}dr_j,
    \end{align*}
    where $C=\sup_{W_r}\sum_{k,l}\overline{\sigma}_{I\lambda k}(z)\widehat{h^*_{kl}}\sigma_{I\lambda l}(z)<+\infty$. 
    In fact, by Griffiths semi-positivity of $h$, the singular Hermitian metric $\widehat{h^*}=h/\mathrm{det}\,h$ is Griffiths semi-negative (see \cite{LYZ21}) and, the norm $\sum_{k,l}\overline{\sigma}_{I\lambda k}(z)\widehat{h^*_{kl}}\sigma_{I\lambda l}(z)=||\sigma_{I\lambda}||^2_{\frac{h}{\mathrm{det}\,h}}$ is bounded by plurisubharmonicity.
    Hence, by Example \ref{Example of integral} if $\tau_j>1-1/q=1/p$ then $\int_{W^*_r}|g|^qdV_{\mathbb{C}^n}<+\infty$. And if $\varepsilon=2(p-1)>0$ and $\tau_j>1/p=2/(2+\varepsilon)$ then $\sigma$ is $L^2$-integrable on $W^*_{r'}$.
    This concludes the case without the Lelong number condition, i.e. the case of $\tau_j=1$.

    Finally, we show the case assuming $\nu(-\log\mathrm{det}\,h,x)<\delta$ for all points in $D$.
    Then $e^{\frac{2}{\delta}\log\mathrm{det}\,h}=(\mathrm{det}\,h)^{2/\delta}$ is locally integrable around $x_0\in D$ by Skoda's result.
    Let $g\!:=\!(\sum_{k,l}\overline{\sigma}_{I\lambda k}(z)\widehat{h^*_{kl}}\sigma_{I\lambda l}(z))\prod^t_{j}|z_j|^{2(\tau_j-1)}(-\log|z_j|^2)^{2(\alpha+\sum^b_{\nu}\delta_{ji_{p\nu}}-1)}$.
    By H\"older's inequality, 
    \begin{align*}
        J_{I,\lambda}=\int_{W^*_r}|g|(\mathrm{det}\,h)^2dV_{\mathbb{C}^n}&\leq\Bigl(\int_{W^*_r} (\mathrm{det}\,h)^{2/\delta}dV_{\mathbb{C}^n}\Bigr)^{\delta}\cdot\Bigl(\int_{W^*_r}|g|^qdV_{\mathbb{C}^n}\Bigr)^{1/q},
    \end{align*}
    where $q=1/(1-\delta)$. Similarly to the above,
    \begin{align*}
        \int_{W^*_r}|g|^qdV_{\mathbb{C}^n}\leq C(\pi r^2)^{n-t+1}\prod^t_{j=1}2\pi\int^r_0r_j^{2q(\tau_j-1)+1}(\log r_j^2)^{2q(\alpha+\sum^b_{\nu=1}\delta_{ji_{p\nu}}-1)}dr_j,
    \end{align*}
    where $C:=\sup_{W^r}(\sum_{k,l}\overline{\sigma}_{I\lambda k}(z)\widehat{h^*_{kl}}\sigma_{I\lambda l}(z))^{2q}<+\infty$.
    Hence, by Example \ref{Example of integral} if $\tau_j>1-1/q=\delta$ then $\int_{W^*_r}|g|^qdV_{\mathbb{C}^n}<+\infty$ and $\sigma$ is $L^2$-integrable on $W^*_{r}$. 
\end{proof}

\vspace{2mm}

$\mathit{Proof ~of ~Theorem ~\ref{L2-type Log Dolbeault isomorphism with sHm vector}}$.
    From Proposition \ref{L2-type Log Dol isom at q=0 with sHm vector}, it is sufficient to show the exactness of the $L^2$-Dolbeault complex $(\mathscr{L}^{p,\ast}_{F\otimes E\otimes\mathrm{det}\,E,h^F_Y\otimes h\otimes\mathrm{det}\,h,\omega_P},\overline{\partial})$ on $D$ at $q\geq1$. 
    Here, $h\otimes\mathrm{det}\,h$ is $L^2$-type Nakano semi-positive (see [Ina22,\,Theorem\,1.3]) by Griffiths semi-positivity of $h$.
    By using [Wat22b,\,Theorem\,4.8], the exactness of the $L^2$-Dolbeault complex $(\mathscr{L}^{p,\ast}_{F\otimes E\otimes\mathrm{det}\,E,h^F_Y\otimes h\otimes\mathrm{det}\,h,\omega_P},\overline{\partial})$ is shown similarly to the proof of Theorem \ref{L2-type Log Dolbeault isomorphism with sHm line}. \qed

\section{Logarithmic vanishing theorems}

\subsection{Logarithmic vanishing theorems involving multiplier ideal sheaves}

In this subsection, we prove Theorems \ref{Log V-thm of k-posi + psef no nu condition} and \ref{Log V-thm of k-posi + psef} and obtain logarithmic vanishing theorems for big line bundles.

\begin{theorem}\label{Log V-thm of k-posi + psef in 4}$(=\mathrm{Theorem}\,\ref{Log V-thm of k-posi + psef})$
    Let $X$ be a compact \kah manifold and $D=\sum^s_{j=1}D_j$ be a simple normal crossing divisor in $X$. 
    Let $A$ be a holomorphic line bundle and $L$ be a holomorphic line bundle equipped with a singular Hermitian metric $h$ which is pseudo-effective, i.e. $i\Theta_{L,h}\geq0$ in the sense of currents. 
    If there exists $0<\delta\leq1$ such that $\nu(-\log h,x)<2\delta$ for all points in $D$ and that for an $\mathbb{R}$-divisor $\Delta=\sum^s_{j=1}a_jD_j$ with $a_j\in(\delta,1]$, the $\mathbb{R}$-line bundle $A\otimes\mathcal{O}_X(\Delta)$ is $k$-positive. 
    Then for any nef line bundle $N$ and any $p,q\geq k$, we have that 
    \begin{align*}
        H^q(X,K_X\otimes\mathcal{O}_X(D)\otimes A\otimes N\otimes L\otimes \mathscr{I}(h))&=0,\\
        H^n(X,\Omega_X^p(\log D)\otimes A\otimes N\otimes L\otimes \mathscr{I}(h))&=0.
    \end{align*}
\end{theorem}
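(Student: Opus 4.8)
The plan is to deduce the vanishing from the $L^2$-type Dolbeault isomorphism of Theorem \ref{L2-type Log Dolbeault isomorphism with sHm line} combined with the $L^2$-estimate of Theorem \ref{L2-estimate of k-posi + psef}, exactly in the spirit of the classical Akizuki--Nakano/Nadel argument transplanted to the open manifold $Y = X \setminus D$ with a Poincar\'e type metric. First I would set up the data: choose a K\"ahler metric $\omega_P$ on $Y$ of Poincar\'e type along $D$, and write $\Omega_X^n(\log D) = K_X \otimes \mathcal{O}_X(D)$, so that the two asserted vanishings are the $q$-th cohomology of $\Omega_X^p(\log D)$ (in bidegree $(n,q)$-resolution) and the $n$-th cohomology of $\Omega_X^p(\log D)$ (in bidegree $(p,n)$-resolution), both twisted by $A \otimes N \otimes L \otimes \mathscr{I}(h)$. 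Since $A \otimes \mathcal{O}_X(\Delta)$ is $k$-positive with $\Delta = \sum a_j D_j$, $a_j \in (\delta,1]$, fix smooth metrics realizing a smooth metric $h_{A'}$ on the $\mathbb{R}$-line bundle $A' := A \otimes \mathcal{O}_X(\Delta)$ with $i\Theta_{A',h_{A'}}$ $k$-positive. Let $N$ be nef: for every $\varepsilon > 0$ there is a smooth metric $h_{N,\varepsilon}$ on $N$ with $i\Theta_{N,h_{N,\varepsilon}} \geq -\varepsilon\omega$ on $X$; choosing $\varepsilon$ small the curvature operator of $h_{A'} \otimes h_{N,\varepsilon}$ on $(p,q)$-forms stays positive definite for $p+q \geq n+k$.

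Next, the key bookkeeping step: the Lelong number hypothesis $\nu(-\log h, x) < 2\delta$ on $D$ lets me invoke the ``moreover'' part of Theorem \ref{L2-type Log Dolbeault isomorphism with sHm line} with exponents $\tau_j = a_j \in (\delta, 1]$. This produces, for $\alpha \gg 0$, a smooth Hermitian metric $h^F_Y = h_F \prod_j \|\sigma_j\|_{D_j}^{2a_j}(\log\|\sigma_j\|^2_{D_j})^{2\alpha}$ on $F|_Y$ with $F := A \otimes N$, such that $\Omega_X^p(\log D) \otimes \mathcal{O}_X(F\otimes L) \otimes \mathscr{I}(h)$ has the fine resolution $(\mathscr{L}^{p,\ast}_{F\otimes L, h^F_Y \otimes h, \omega_P}, \overline\partial)$; hence $H^q$ of the sheaf equals $H^q$ of the complex of global sections. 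Now observe that, up to bounded factors near $D$, the metric $h^F_Y$ differs from $h_F$ by the weight $\prod_j \|\sigma_j\|_{D_j}^{2a_j}$ times a Poincar\'e-logarithmic factor; the first factor is precisely a singular metric for $\mathcal{O}_X(\Delta)$, so $h^F_Y \cdot (\text{log factors})$ behaves like a smooth metric on $A' \otimes N$ whose curvature is, modulo the negative but integrable contribution of the $(\log\|\sigma_j\|^2)^{2\alpha}$ terms absorbed by the Poincar\'e metric (this is exactly the mechanism used in the Lemma computing $i\Theta_{F,h^F_{\alpha,\tau}} \geq_{Nak} 2\alpha\omega_P \otimes \mathrm{id}_F$ in the proof of Theorem \ref{L2-type Log Dolbeault isomorphism with sHm line}), still $k$-positive in the relevant range $p+q \geq n+k$. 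Thus the curvature operator $A^{p,q}_{h^F_Y \otimes h_{N,\varepsilon}, \omega_P}$ is positive definite on $\Lambda^{p,q}T^*_Y \otimes (F\otimes L)$ for $p+q \geq n+k$, which is the hypothesis of Theorem \ref{L2-estimate of k-posi + psef} applied with $A$ there replaced by $A \otimes N$ and the base replaced by $Y$.

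Then I run the standard argument: let $f \in \Gamma(X, \mathscr{L}^{n,q}_{F\otimes L, h^F_Y \otimes h, \omega_P})$ (resp.\ $\mathscr{L}^{p,n}$) be $\overline\partial$-closed with $q \geq k$ (resp.\ $p \geq k$). By compactness of $X$ and finiteness of the Poincar\'e volume, together with the fact that $f$ is globally $L^2$ on $Y$, the right-hand side $\int_Y \langle B^{-1} f, f\rangle$ is finite (here I use that $B^{-1}$ is bounded because the curvature is $\geq C\omega_P \otimes \mathrm{id}$ after the $\alpha$-twist, as in Lemma \ref{Lemma positivity of h_V}). Part $(a)$ (resp.\ $(b)$) of Theorem \ref{L2-estimate of k-posi + psef} — applied on $Y$ as justified in Remark \ref{Rem for L-estimate k-posi + psef}, using that $\omega_P$ is complete and that $h$ can be approximated by Demailly regularization on the compact $X$ — yields $u \in \Gamma(X, \mathscr{L}^{n,q-1})$ (resp.\ $\mathscr{L}^{p,n-1}$) with $\overline\partial u = f$. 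So the Dolbeault cohomology of the complex vanishes in these degrees, and by the isomorphism of Theorem \ref{L2-type Log Dolbeault isomorphism with sHm line} we conclude $H^q(X, K_X \otimes \mathcal{O}_X(D) \otimes A \otimes N \otimes L \otimes \mathscr{I}(h)) = 0$ for $q \geq k$ and $H^n(X, \Omega_X^p(\log D) \otimes A \otimes N \otimes L \otimes \mathscr{I}(h)) = 0$ for $p \geq k$.

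The main obstacle I anticipate is the curvature bookkeeping in the middle paragraph: one must verify carefully that twisting $h_F$ by $\prod_j \|\sigma_j\|^{2a_j}_{D_j}(\log\|\sigma_j\|^2)^{2\alpha}$ and by the perturbed nef metric $h_{N,\varepsilon}$ still leaves the curvature operator \emph{positive definite} (not merely semi-positive) on $(p,q)$-forms for all $p+q \geq n+k$ with respect to the \emph{Poincar\'e} metric $\omega_P$ — the subtlety being that the $a_j D_j$ contribution and the Poincar\'e singularity both blow up near $D$ and must be compared at the right rate. This is handled exactly as in \cite{HLWY16} and in the Lemma proved earlier in \S3.1 (the choice of $\alpha$ large forces $i\Theta \geq 2\alpha\omega_P \otimes \mathrm{id}$ in the fibre directions transverse to $D$, while along $D$ the $k$-positivity of $A \otimes \mathcal{O}_X(\Delta)$ is preserved because $a_j$ is literally the coefficient appearing in $\Delta$); the condition $a_j \in (\delta, 1]$ is what simultaneously makes the $L^2$ resolution valid (Theorem \ref{L2-type Log Dolbeault isomorphism with sHm line}) and keeps $\mathcal{O}_X(\Delta)$ with coefficients $\leq 1$ so that no extra positivity is lost. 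Everything else — the finiteness of the $L^2$ norms, the passage from the sheaf cohomology to the complex, and the solvability of $\overline\partial$ — is routine given the cited results.
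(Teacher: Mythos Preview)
Your overall strategy—Dolbeault isomorphism of Theorem \ref{L2-type Log Dolbeault isomorphism with sHm line} plus the $L^2$-estimate of Theorem \ref{L2-estimate of k-posi + psef}—is exactly the paper's, and your identification of the curvature bookkeeping as the crux is correct. But that paragraph contains a genuine gap: you fix a \emph{generic} Poincar\'e metric $\omega_P$ and then assert that $A^{p,q}_{h^F_Y\otimes h_{N,\varepsilon},\omega_P}>0$ for $p+q\geq n+k$, citing the local lemma $i\Theta_{F,h^F_{\alpha,\tau}}\geq_{Nak}2\alpha\omega_P\otimes\mathrm{id}_F$ from the proof of Theorem \ref{L2-type Log Dolbeault isomorphism with sHm line}. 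That lemma is proved only on a coordinate chart $W^*_r$ and relies on the auxiliary factor $e^{-3\alpha|z|^2}$, which is not part of the global metric $h^F_Y$; it establishes local exactness of the resolution, not a global curvature lower bound. Globally, $i\Theta_{A',h_{A'}}$ is bounded with respect to $\omega$, so its eigenvalues with respect to a generic $\omega_P$ degenerate near $D$; the $k$-positivity you need for $[i\Theta,\Lambda_{\omega_P}]$ is not automatic.

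The paper closes this gap by a different (and essential) choice of complete metric: it sets
\[
\omega_Y \;=\; i\Theta_{\mathscr{F},h^{\mathscr{F}}_{\alpha,\varepsilon,\tau}} + 2(4n+1)\gamma\omega,
\]
which is itself of Poincar\'e type along $D$ (thanks to the $\partial\log\|\sigma_j\|^2\wedge\overline\partial\log\|\sigma_j\|^2/(\log\varepsilon\|\sigma_j\|^2)^2$ term in the curvature), and then computes the eigenvalues of $i\Theta_{\mathscr{F}}$ \emph{with respect to $\omega_Y$} explicitly as $\kappa_j=16n^2\lambda_j/(16n^2\lambda_j+(4n+1)c_0)$. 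With $\gamma=c_0/(32n^2)$ one gets $\kappa_j\in[-\tfrac{1}{4n},1)$ for all $j$ and $\kappa_j\geq 1-\tfrac{1}{4n}$ for $j\geq k$, whence $\langle[i\Theta,\Lambda_{\omega_Y}]u,u\rangle\geq\tfrac12|u|^2$ for $p+q\geq n+k$. This construction (taken from \cite{HLWY16}) is the missing ingredient; without it your positivity claim on a fixed $\omega_P$ does not go through. A minor second point: after solving $\overline\partial u=f$ on $Y$ you need Demailly's extension Lemma \ref{Ext d-equation for hypersurface} to conclude $\overline\partial u=f$ on $X$; you implicitly assume this when you write $u\in\Gamma(X,\mathscr{L}^{n,q-1})$.
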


\begin{proof}
    Let $\omega$ be a fixed \kah metric on $X$. Let $F=A\otimes \mathcal{O}_X(\Delta)$. Since $F$ is a $k$-positive $\mathbb{R}$-line bundle, there exist smooth Hermitian metrics $h_A$ and $h_j$ on $A$ and $\mathcal{O}_X(D_j)$ respectively,
    such that the curvature form of the induced metric $h_F=h_A\prod^s_{j=1}h_j^{a_j}$ on $F$, i.e. $i\Theta_{F,h_F}=i\Theta_{A,h_A}+i\sum^s_{j=1}a_j\Theta_{\mathcal{O}_X(D_j),h_j}$,
    is semipositive and has at least $n-k+1$ positive eigenvalues at each point of $X$.

    Let $\lambda^1_{\omega,h_F}\leq\cdots\leq\lambda^n_{\omega,h_F}$ be the eigenvalues of $i\Theta_{F,h_F}$ with respect to $\omega$.
    Thus for any $j\geq k$ we have $\lambda^j_{\omega,h_F}\geq\lambda^k_{\omega,h_F}\geq\min_{x\in X}\lambda^k_{\omega,h_F}(x)=:c_0>0$.
    Without loss of generality, we assume $\gamma\in(0,1)$. Since $N$ is nef, there exists a smooth Hermitian metric $h_{N,\gamma}$ on $N$ such that $i\Theta_{N,h_{N,\gamma}}=-\idd\log h_{N,\gamma}>-\gamma\omega$.
    Let $\sigma_j$ be the defining section of $D_j$. Fix smooth Hermitian metrics $h_{D_j}:=||\bullet||^2_{D_j}$ on line bundles $\mathcal{O}_X(D_j)$.  
    We define the smooth Hermitian metric $h_\Delta:=\prod^s_{j=1}h^{a_j}_{D_j}$ on $\mathcal{O}_X(\Delta)$.

    The induced smooth Hermitian metric on $\mathscr{F}:=A\otimes N$ over $Y$ is defined by 
    \begin{align*}
        h^{\mathscr{F}}_{\alpha,\varepsilon,\tau}:=h_{N,\gamma}\cdot h_F\cdot (h_\Delta)^{-1}\cdot \prod^s_{j=1}||\sigma_j||^{2\tau_j}_{D_j}(\log(\varepsilon||\sigma_j||^2_{D_j}))^{2\alpha}.
    \end{align*}
    Here the constant $\alpha>0$ is chosen to be large enough and the constants $\tau_j\in(\delta,1]$ and $\varepsilon\in(0,1]$ are to be determined later. 
    A straightforward computation shows that 
    \begin{align*}
        i\Theta_{\mathscr{F},h^{\mathscr{F}}_{\alpha,\varepsilon,\tau}}&=i\Theta_{F,h^F}+i\Theta_{N,h_{N,\gamma}}+i\sum^s_{j=1}(\tau_j-a_j)\Theta_{\mathcal{O}_X(D_j),h_{D_j}}\\
        &+i\sum^s_{j=1}\frac{2\alpha \Theta_{\mathcal{O}_X(D_j),h_{D_j}}}{\log(\varepsilon||\sigma_j||^2_{D_j})}+i\sum^s_{j=1}\frac{2\alpha\partial\log||\sigma_j||^2_{D_j}\wedge\overline{\partial}\log||\sigma_j||^2_{D_j}}{(\log(\varepsilon||\sigma_j||^2_{D_j}))^2}.
    \end{align*}

    Since $a_j\in(\delta,1]$, for a fixed large $\alpha$, we can choose $\tau_1,\cdots,\tau_s\in(\delta,1]$ and $\varepsilon$ such that $\tau_j-a_j,\varepsilon$ are small enough and 
    \begin{align*}
        -\frac{\gamma}{2}\omega\leq i\sum^s_{j=1}(\tau_j-a_j)\Theta_{\mathcal{O}_X(D_j),h_{D_j}}\leq \frac{\gamma}{2}\omega,\quad -\frac{\gamma}{2}\omega\leq i\sum^s_{j=1}\frac{2\alpha \Theta_{\mathcal{O}_X(D_j),h_{D_j}}}{\log(\varepsilon||\sigma_j||^2_{D_j})} \leq \frac{\gamma}{2}\omega.
    \end{align*}
    Note that the constants $\tau_j$ and $\varepsilon$ are thus fixed, and the choice of $\varepsilon$ depends on $\alpha$. Let
    \begin{align*}
        \omega_Y=i\Theta_{\mathscr{F},h^{\mathscr{F}}_{\alpha,\varepsilon,\tau}}+2(4n+1)\gamma\omega.
    \end{align*}
    It is easy to check that $\omega_Y$ is a Poincar\'e type \kah metric on $Y$. Then we have that 
    \begin{align*}
        i\Theta_{\mathscr{F},h^{\mathscr{F}}_{\alpha,\varepsilon,\tau}}\geq i\Theta_{F,h^F}-2\gamma\omega, \quad \omega_Y\geq8n\gamma\omega
    \end{align*}
    on $Y$. This implies that $i\Theta_{\mathscr{F},h^{\mathscr{F}}_{\alpha,\varepsilon,\tau}}=\omega_Y-2(4n+1)\gamma\omega\geq-\frac{1}{4n}\omega_Y$.
    By exactly the same argument as in the proof of Theorem \ref{L2-type Log Dolbeault isomorphism with sHm line}, when $\alpha$ is large enough, we obtain 
    \begin{align*}
        H^q(X,\Omega^p_X(\log D)\otimes\mathcal{O}_X(A\otimes N\otimes L)\otimes\mathscr{I}(h))\cong H^q(\Gamma(X,\mathscr{L}^{p,\ast}_{\mathscr{F}\otimes L,h^{\mathscr{F}}_{\alpha,\varepsilon,\tau}\otimes h,\omega_Y})).
    \end{align*}

    Finally, we prove the vanishing of $L^2$-cohomology groups using $L^2$-estimates equipped with a singular Hermitian metric.
    We set $\gamma=\frac{c_0}{32n^2}$. 
    Let $\lambda^1_{\omega,h^{\mathscr{F}}_{\alpha,\varepsilon,\tau}}\leq\cdots\leq\lambda^n_{\omega,h^{\mathscr{F}}_{\alpha,\varepsilon,\tau}}$ be the eigenvalues of $i\Theta_{\mathscr{F},h^{\mathscr{F}}_{\alpha,\varepsilon,\tau}}$ with respect to $\omega$.
    For any fixed point $x_0\in Y$, we can write that $\omega=i\sum^n_{j=1}\zeta_j\wedge \overline{\zeta}_j$ and $i\Theta_{\mathscr{F},h^{\mathscr{F}}_{\alpha,\varepsilon,\tau}}=i\sum^n_{j=1}\lambda^j_{\omega,h^{\mathscr{F}}_{\alpha,\varepsilon,\tau}} \zeta_j\wedge \overline{\zeta}_j$.
    Then we get 
    \begin{align*}
        i\Theta_{\mathscr{F},h^{\mathscr{F}}_{\alpha,\varepsilon,\tau}}
        =i\sum^n_{j=1}\frac{16n^2\lambda^j_{\omega,h^{\mathscr{F}}_{\alpha,\varepsilon,\tau}}}{16n^2\lambda^j_{\omega,h^N_{\alpha,\varepsilon,\tau}}+(4n+1)c_0} \zeta'_j\wedge \overline{\zeta}'_j
    \end{align*}
    where $\zeta'_j=\sqrt{\lambda^j_{\omega,h^{\mathscr{F}}_{\alpha,\varepsilon,\tau}}+2(4n+1)\gamma}\cdot\zeta_j$.
    Note that $\omega_Y=i\sum^n_{j=1}\zeta'_j\wedge \overline{\zeta}'_j$ and the eigenvalues of $i\Theta_{\mathscr{F},h^{\mathscr{F}}_{\alpha,\varepsilon,\tau}}$ with respect to $\omega_Y$ are 
    \begin{align*}
        \kappa_j:=\frac{16n^2\lambda^j_{\omega,h^{\mathscr{F}}_{\alpha,\varepsilon,\tau}}}{16n^2\lambda^j_{\omega,h^{\mathscr{F}}_{\alpha,\varepsilon,\tau}}+(4n+1)c_0}<1.
    \end{align*}
    Thus $\kappa_j\in[-1/4n,1)$ and we obtain $\lambda^j_{\omega,h^{\mathscr{F}}_{\alpha,\varepsilon,\tau}}\geq\lambda^j_{\omega,h_F}-2\gamma$.
    Hence for any $j\geq k$, we have that $\lambda^j_{\omega,h^{\mathscr{F}}_{\alpha,\varepsilon,\tau}}\geq\min_{x\in X}\lambda^k_{\omega,h_F}(x)-2\gamma=c_0-2\gamma=(1-\frac{1}{16n^2})c_0>0$ and
    \begin{align*}
        \kappa_j&
        =1-\frac{(4n+1)c_0}{16n^2\lambda^j_{\omega,h^{\mathscr{F}}_{\alpha,\varepsilon,\tau}}+(4n+1)c_0}
        \geq 1-\frac{(4n+1)c_0}{16n^2(1-1/16n^2)c_0+(4n+1)c_0}
        =1-\frac{1}{4n}.
    \end{align*}
    
    For any $p+q\geq n+k$ and any $u\in L^2_{p,q}(Y,\mathscr{F},h^{\mathscr{F}}_{\alpha,\varepsilon,\tau},\omega_Y)$, we obtain 
    \begin{align*}
        \langle[i\Theta_{\mathscr{F},h^{\mathscr{F}}_{\alpha,\varepsilon,\tau}},\Lambda_{\omega_Y}]u,u\rangle_{\omega_Y}&\geq\biggl(\sum^q_{j=1}\kappa_j-\sum^n_{j=p+1}\kappa_j \biggr)|u|^2_{\omega_Y}\\
        &\geq\bigl((q-k)\bigl(1-\frac{1}{4n}\bigr)-\frac{k}{4n}-(n-p)\bigr)|u|^2_{\omega_Y}\\
        &=\bigl((p+q-n-k)-\frac{q-k}{4n}-\frac{k}{4n}\bigr)|u|^2_{\omega_Y}\\
        &\geq\frac{1}{2}|u|^2_{\omega_Y}.
    \end{align*}
    Hence, $A^{p,q}_{\mathscr{F},h^{\mathscr{F}}_{\alpha,\varepsilon,\tau},\omega_Y}\geq1/2$ for $p+q\geq n+k$ and we get 
    \begin{align*}
        B_{h^{\mathscr{F}}_{\alpha,\varepsilon,\tau},\omega_Y}:=[i\Theta_{\mathscr{F},h^{\mathscr{F}}_{\alpha,\varepsilon,\tau}}\otimes\mathrm{id}_L,\Lambda_{\omega_Y}]\geq\frac{1}{2}.
    \end{align*}

    By compact-ness of $X$, for any $\overline{\partial}$-closed $f\in \Gamma(X,\mathscr{L}^{n,q}_{\mathscr{F}\otimes L,h^{\mathscr{F}}_{\alpha,\varepsilon,\tau}\otimes h,\omega_Y})$, we get 
    \begin{align*}
        \int_{Y}\langle B_{h^{\mathscr{F}}_{\alpha,\varepsilon,\tau},\omega_Y}^{-1}f,f\rangle_{h^{\mathscr{F}}_{\alpha,\varepsilon,\tau}\otimes h,\omega_Y}dV_{\omega_Y}\leq2\int_{Y}|f|^2_{h^{\mathscr{F}}_{\alpha,\varepsilon,\tau}\otimes h,\omega_Y}dV_{\omega_Y}<+\infty,
    \end{align*}
    i.e. $f\in L^2_{n,q}(Y,\mathscr{F}\otimes L,h^{\mathscr{F}}_{\alpha,\varepsilon,\tau}\otimes h,\omega_Y)$.
    Here, $h^{\mathscr{F}}_{\alpha,\varepsilon,\tau}$ is a smooth on $Y$.
    Since Theorem \ref{L2-estimate of k-posi + psef}, there exists $u\in L^2_{n,q-1}(Y,\mathscr{F}\otimes L,h^{\mathscr{F}}_{\alpha,\varepsilon,\tau}\otimes h,\omega_Y)$ such that $\overline{\partial}u=f$ on $Y$ and 
    \begin{align*}
        \int_{Y}|u|^2_{h^{\mathscr{F}}_{\alpha,\varepsilon,\tau}\otimes h,\omega_Y}dV_{\omega_Y}\leq\int_{Y}\langle B_{h^{\mathscr{F}}_{\alpha,\varepsilon,\tau},\omega_Y}^{-1}f,f\rangle_{h^{\mathscr{F}}_{\alpha,\varepsilon,\tau}\otimes h,\omega_Y}dV_{\omega_Y}<+\infty,
    \end{align*}
    where $|u|^2_{h^{\mathscr{F}}_{\alpha,\varepsilon,\tau}\otimes h,\omega_Y}$ is locally integrable on $Y$.
    From the following lemma, letting $u=0$ on $D$ then we have that $\overline{\partial}u=f$ on $X$ and $u\in \Gamma(X,\mathscr{L}^{n,q-1}_{\mathscr{F}\otimes L,h^{\mathscr{F}}_{\alpha,\varepsilon,\tau}\otimes h,\omega_Y})$. 
    
    Hence, we obtain $H^q(X,K_X(\log D)\otimes\mathcal{O}_X(A\otimes N\otimes L)\otimes\mathscr{I}(h))=0$ for $q\geq k$.

    Similarly proved in the case of $(p,n)$-forms. 
\end{proof}

\begin{lemma}\label{Ext d-equation for hypersurface}$\mathrm{(cf.\,[Dem82,\,Lemma\,6.9])}$ 
    Let $\Omega$ be an open subset of $\mathbb{C}^n$ and $Z$ be a complex analytic subset of $\Omega$. Assume that $u$ is a $(p,q-1)$-form with $L^2_{loc}$ coefficients and $g$ is a $(p,q)$-form with $L^1_{loc}$ coefficients such that $\overline{\partial}u=g$ on $\Omega\setminus Z$ (in the sense of currents).
    Then $\overline{\partial}u=g$ on $\Omega$.
\end{lemma}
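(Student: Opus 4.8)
The plan is the classical cut-off argument: a proper analytic set is negligible both for Lebesgue measure and, with a well-chosen family of cut-off functions, for the Dirichlet integral, so the error terms produced when one excises $Z$ from a test form tend to $0$. The identity $\overline{\partial}u=g$ is local and holds on the open set $\Omega\setminus Z$ by hypothesis, so it is enough to verify it near an arbitrary point $x_0\in Z$. Shrinking $\Omega$ to a small connected ball $B\Subset\Omega$ centred at $x_0$ and using that $Z$ is a proper analytic subset, I fix a holomorphic function $f$ on $B$ with $f\not\equiv0$ and $Z\cap B\subset f^{-1}(0)$; after rescaling I may assume $|f|^{2}\le e^{-e}$ on $B$, so that $w:=\log\log(1/|f|^{2})$ is smooth and positive on $B\setminus Z$ and tends to $+\infty$ along $Z$.

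Next I construct the cut-offs. Fix $\chi\in C^\infty(\mathbb{R};[0,1])$ with $\chi\equiv1$ on $(-\infty,1]$, $\chi\equiv0$ on $[2,+\infty)$ and $|\chi'|\le 2$, and for small $\varepsilon>0$ set $\lambda_\varepsilon:=\log\log(1/\varepsilon)\to+\infty$ and $\theta_\varepsilon:=\chi(w/\lambda_\varepsilon)$, extended by $0$ over $Z$. Then $\theta_\varepsilon\in C^\infty(B)$, $0\le\theta_\varepsilon\le1$, $\theta_\varepsilon$ vanishes on the neighbourhood $\{w\ge 2\lambda_\varepsilon\}$ of $Z$, $\theta_\varepsilon\to1$ pointwise on $B\setminus Z$, and $\mathrm{supp}\,\overline{\partial}\theta_\varepsilon\subset S_\varepsilon:=\{\lambda_\varepsilon\le w\le 2\lambda_\varepsilon\}\subset\{|f|^{2}\le\varepsilon\}$, a set whose Lebesgue measure tends to $0$. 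From $\overline{\partial}w=\overline{\partial}\log|f|^{2}/\log|f|^{2}$ and $|\overline{\partial}\log|f|^{2}|\le|df|/|f|$ off $Z$, one obtains the pointwise bound $|\overline{\partial}\theta_\varepsilon|\le 2\lambda_\varepsilon^{-1}|df|\,|f|^{-1}|\log|f|^{2}|^{-1}$ on $S_\varepsilon$, hence for every compact $K\subset B$
\begin{align*}
    \int_K|\overline{\partial}\theta_\varepsilon|^{2}\,dV_{\mathbb{C}^n}\ \le\ \frac{4}{\lambda_\varepsilon^{2}}\int_{K\cap S_\varepsilon}\frac{|df|^{2}}{|f|^{2}(\log|f|^{2})^{2}}\,dV_{\mathbb{C}^n}.
\end{align*}
The integrand is locally integrable (Example~\ref{Example of integral} when $f$ is a monomial, the general case following by a log resolution of $f$; equivalently, $\idd\log(-1/\log|f|^{2})$ is a positive current of locally finite mass, being $\idd$ of a bounded-above plurisubharmonic function), so $\overline{\partial}\theta_\varepsilon\to0$ in $L^{2}_{loc}(B)$.

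Finally I pass to the limit. Let $\phi$ be a smooth test form of complementary bidegree with $K:=\mathrm{supp}\,\phi\Subset B$. Since $\theta_\varepsilon\phi$ is supported in $B\setminus Z$, the hypothesis yields $\pm\langle u,\overline{\partial}(\theta_\varepsilon\phi)\rangle=\langle g,\theta_\varepsilon\phi\rangle$, that is,
\begin{align*}
    \pm\langle u,\overline{\partial}\theta_\varepsilon\wedge\phi\rangle\ \pm\ \langle u,\theta_\varepsilon\,\overline{\partial}\phi\rangle\ =\ \langle g,\theta_\varepsilon\phi\rangle,
\end{align*}
with a sign depending only on $(p,q)$. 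As $\varepsilon\to0$, the first term is bounded by $\|\phi\|_\infty\,\|u\|_{L^{2}(K)}\,\|\overline{\partial}\theta_\varepsilon\|_{L^{2}(K)}\to0$, while the second and third converge to $\pm\langle u,\overline{\partial}\phi\rangle$ and $\langle g,\phi\rangle$ by dominated convergence, since $u\in L^{1}_{loc}$, $g\in L^{1}_{loc}$, $0\le\theta_\varepsilon\le1$ and $\theta_\varepsilon\to1$ a.e. Hence $\pm\langle u,\overline{\partial}\phi\rangle=\langle g,\phi\rangle$ for every $\phi$, i.e. $\overline{\partial}u=g$ on $B$, and therefore on $\Omega$. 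The only genuine obstacle is the uniform $L^{2}$-control of $\overline{\partial}\theta_\varepsilon$, i.e. the local integrability estimate above; the construction of the cut-offs and the two dominated-convergence limits are routine.
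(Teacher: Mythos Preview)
The paper does not give a proof of this lemma; it merely quotes it from \cite{Dem82}. Your argument is the standard cut-off proof (essentially Demailly's), and it is correct: the key point is that $-\log\bigl(-\log|f|^{2}\bigr)$ is plurisubharmonic on $B$, so that $i\partial\overline{\partial}$ of it has locally finite trace measure, which is precisely the local integrability of $|df|^{2}\,|f|^{-2}(\log|f|^{2})^{-2}$ you need; combined with $\lambda_\varepsilon\to\infty$ this forces $\|\overline{\partial}\theta_\varepsilon\|_{L^{2}(K)}\to0$, and then Cauchy--Schwarz (using $u\in L^{2}_{loc}$) and dominated convergence (using $g\in L^{1}_{loc}$) finish the job. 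One cosmetic remark: the qualifier ``bounded-above'' for the psh function is unnecessary, since every plurisubharmonic function already has $i\partial\overline{\partial}$ of locally finite mass; and in fact the factor $\lambda_\varepsilon^{-2}\to0$ alone suffices, so you do not even need $\mathrm{vol}(S_\varepsilon)\to0$.
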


Theorem \ref{Log V-thm of k-posi + psef no nu condition} is shown similarly to the proof of Theorem \ref{Log V-thm of k-posi + psef in 4} as the case $\tau_j=1$ by using Theorem \ref{L2-type Log Dolbeault isomorphism with sHm line}.
We obtain the following logarithmic vanishing for big line bundles which is an analogue of Theorem \ref{Log V-thm of k-posi + psef}.

\begin{corollary}\label{Log V-thm for big in 4}
    Let $X$ be a compact \kah manifold and $D=\sum^s_{j=1}D_j$ be a simple normal crossing divisor in $X$. 
    Let $A$ be a holomorphic line bundle and $L$ be a holomorphic line bundle equipped with a singular Hermitian metric $h$ which is big. 
    If there exists $0<\delta\leq1$ such that $\nu(-\log h,x)<2\delta$ for all points in $D$ and that for an $\mathbb{R}$-divisor $\Delta=\sum^s_{j=1}a_jD_j$ with $a_j\in(\delta,1]$, the $\mathbb{R}$-line bundle $A\otimes\mathcal{O}_X(\Delta)$ is semi-positive. 
    Then we have the following
    \begin{align*}
        H^q(X,K_X\otimes\mathcal{O}_X(D)\otimes A\otimes L\otimes \mathscr{I}(h))&=0,\\
        H^n(X,\Omega_X^p(\log D)\otimes A\otimes L\otimes \mathscr{I}(h))&=0
    \end{align*}
    for $p,q\geq 1$.
\end{corollary}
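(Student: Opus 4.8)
The plan is to follow the proof of Theorem~\ref{Log V-thm of k-posi + psef in 4} almost verbatim, the only structural change being the origin of the strict positivity used in the final $L^2$-estimate. In that proof the positivity came from the $k$-positivity of $F=A\otimes\mathcal{O}_X(\Delta)$; here $A\otimes\mathcal{O}_X(\Delta)$ is only semi-positive, so instead I would extract it from the bigness of $h$. Since $h$ is big, i.e.\ singular positive in the sense of Definition~\ref{def of psef & big}, and $X$ is compact, the weight of $h$ agrees almost everywhere with a strictly plurisubharmonic function, hence there is $\varepsilon>0$ with $i\Theta_{L,h}\geq\varepsilon\omega$ for a fixed \kah metric $\omega$ on $X$. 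Fix a smooth Hermitian metric $h_0$ on $A\otimes\mathcal{O}_X(\Delta)$ with $i\Theta\geq0$ and set $h_A:=h_0\cdot h_\Delta^{-1}$ on $A$, where $h_\Delta=\prod_{j=1}^s||\bullet||_{D_j}^{2a_j}$; then $i\Theta_{A,h_A}\geq -C_0\omega$ for some $C_0>0$.

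First I would run the metric construction of Theorem~\ref{Log V-thm of k-posi + psef in 4} with $\mathscr{F}:=A$: choosing $\tau_j\in(\delta,1]$ close to $a_j$, a large integer $\alpha>0$, and a small $\varepsilon'\in(0,1]$, one gets on $A|_Y$ the smooth Hermitian metric
\[
h^{\mathscr{F}}_{\alpha,\varepsilon',\tau}=h_A\prod_{j=1}^s||\sigma_j||_{D_j}^{2\tau_j}\bigl(\log(\varepsilon'||\sigma_j||^2_{D_j})\bigr)^{2\alpha}
\]
and a Poincar\'e type \kah metric $\omega_Y$ on $Y$ lying in the curvature class of $h^{\mathscr{F}}_{\alpha,\varepsilon',\tau}$. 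As there, the correction terms involving $(\tau_j-a_j)\Theta_{\mathcal{O}_X(D_j),h_{D_j}}$ and $\alpha\Theta_{\mathcal{O}_X(D_j),h_{D_j}}/\log(\varepsilon'||\sigma_j||^2_{D_j})$ can be made $\leq\gamma\omega$ in absolute value for any prescribed $\gamma>0$, while the terms $\alpha\,\partial\log||\sigma_j||^2\wedge\overline{\partial}\log||\sigma_j||^2/(\log(\varepsilon'||\sigma_j||^2_{D_j}))^2$ are non-negative and produce, with the remaining terms, the Poincar\'e blow-up along $D$. Choosing $\gamma<\varepsilon$, the total curvature on $Y$ satisfies
\[
i\Theta_{A\otimes L,\,h^{\mathscr{F}}_{\alpha,\varepsilon',\tau}\otimes h}\ \geq\ (\varepsilon-\gamma)\omega+(\text{non-negative Poincar\'e terms})\ \geq\ c\,\omega_Y\quad\text{on }Y
\]
for a suitable $c>0$, after the same balancing of constants as in Theorem~\ref{Log V-thm of k-posi + psef in 4} (now with ``$k=1$'' and with $h$ supplying the positivity that $A\otimes\mathcal{O}_X(\Delta)$ lacks). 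Hence $B:=[i\Theta_{A\otimes L,\,h^{\mathscr{F}}_{\alpha,\varepsilon',\tau}\otimes h},\Lambda_{\omega_Y}]\geq c$ on $\Lambda^{p,q}T^*_Y\otimes(A\otimes L)$ for all $p+q\geq n+1$.

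Then I would invoke the variant of the $L^2$-type Dolbeault isomorphism of Theorem~\ref{L2-type Log Dolbeault isomorphism with sHm line} used in the proof of Theorem~\ref{Log V-thm of k-posi + psef in 4} (its ``moreover'' hypothesis $\nu(-\log h,x)<2\delta$ on $D$ together with $\tau_j\in(\delta,1]$ is exactly what is assumed) to obtain
\[
H^q\bigl(X,K_X\otimes\mathcal{O}_X(D)\otimes A\otimes L\otimes\mathscr{I}(h)\bigr)\cong H^q\bigl(\Gamma(X,\mathscr{L}^{n,\ast}_{A\otimes L,\,h^{\mathscr{F}}_{\alpha,\varepsilon',\tau}\otimes h,\,\omega_Y})\bigr),
\]
and the analogue in bidegree $(p,n)$ with $\Omega^p_X(\log D)$ in place of $\Omega^n_X(\log D)=K_X\otimes\mathcal{O}_X(D)$. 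Finally, given a $\overline{\partial}$-closed global section $f$ of $\mathscr{L}^{n,q}_{A\otimes L,\ldots}$ with $q\geq1$, compactness of $X$ gives $\int_Y\langle B^{-1}f,f\rangle\,dV_{\omega_Y}\leq c^{-1}\int_Y|f|^2\,dV_{\omega_Y}<+\infty$, and the $L^2$-estimate for $\overline{\partial}$ on the complete \kah manifold $(Y,\omega_Y)$ with the singular, positively curved metric $h$ on $L$ --- legitimate by Demailly's approximation of $h$ on the compact $X$, exactly as in Theorem~\ref{L2-estimate of k-posi + psef} and Remark~\ref{Rem for L-estimate k-posi + psef} --- produces $u\in L^2_{n,q-1}$ with $\overline{\partial}u=f$; extending $u$ by $0$ across $D$ via Lemma~\ref{Ext d-equation for hypersurface} yields the vanishing, and the $(p,n)$ case is identical. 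I expect the main obstacle to be precisely the constant balancing of the second paragraph: one must keep $\gamma<\varepsilon$, keep every $\tau_j$ strictly above $\delta$ so that the kernel in Theorem~\ref{L2-type Log Dolbeault isomorphism with sHm line} is exactly $\Omega^p_X(\log D)\otimes\mathcal{O}_X(A\otimes L)\otimes\mathscr{I}(h)$ and not a larger sheaf, and keep the eigenvalues of $i\Theta_{A\otimes L}$ with respect to $\omega_Y$ in a range for which the Bochner--Kodaira--Nakano inequality still gives a strictly positive lower bound; once these are compatible, the multiplier-ideal bookkeeping near $D$ is word for word that of Proposition~\ref{L2-type Log Dol isom at q=0 with sHm line}.
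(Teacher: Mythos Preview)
Your proposal is correct and follows essentially the same route as the paper: extract strict positivity from the bigness of $h$ (the paper writes $i\Theta_{L,h}\geq 3\gamma\omega$), build the metric $h^{A}_{\alpha,\varepsilon,\tau}$ and the Poincar\'e-type $\omega_Y$ with the $\tau_j\in(\delta,1]$ constraint, invoke the ``moreover'' clause of Theorem~\ref{L2-type Log Dolbeault isomorphism with sHm line}, and then solve $\overline{\partial}$ on $Y$ via Demailly's approximation of $h$ before extending across $D$ by Lemma~\ref{Ext d-equation for hypersurface}. The only cosmetic discrepancy is that Theorem~\ref{L2-estimate of k-posi + psef} as stated places the positivity in the \emph{smooth} factor $h_A$, whereas here it lives in the singular $h$; the paper therefore cites the variant \textnormal{[Wat22b,\,Theorem\,4.6]} and spells out the approximation $h_\nu$ explicitly, which is exactly what your phrase ``legitimate by Demailly's approximation'' is pointing at.
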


\begin{proof}
    Let $\omega$ be a fixed \kah metric on $X$. Let $F=A\otimes \mathcal{O}_X(\Delta)$. Since $F$ is a $k$-positive $\mathbb{R}$-line bundle, there exist smooth Hermitian metrics $h_A$ and $h_j$ on $A$ and $\mathcal{O}_X(D_j)$ respectively,
    such that the curvature form of the induced metric $h_F=h_A\prod^s_{j=1}h_j^{a_j}$ on $F$, i.e. $i\Theta_{F,h_F}=i\Theta_{A,h_A}+i\sum^s_{j=1}a_j\Theta_{\mathcal{O}_X(D_j),h_j}$, is semipositive.

    Since $h$ is big, there exists $\gamma>0$ such that $i\Theta_{L,h}\geq3\gamma\omega$ in the sense of currents. 
    Let $\sigma_j$ be the defining section of $D_j$. Fix smooth Hermitian metrics $h_{D_j}:=||\bullet||^2_{D_j}$ on line bundles $\mathcal{O}_X(D_j)$. 
    We define the smooth Hermitian metric $h_\Delta:=\prod^s_{j=1}h^{a_j}_{D_j}$ on $\mathcal{O}_X(\Delta)$.

    The induced smooth Hermitian metric on $A$ over $Y$ is defined by 
    \begin{align*}
        h^{A}_{\alpha,\varepsilon,\tau}:=h_F\cdot (h_\Delta)^{-1}\cdot \prod^s_{j=1}||\sigma_j||^{2\tau_j}_{D_j}(\log(\varepsilon||\sigma_j||^2_{D_j}))^{2\alpha}.
    \end{align*}
    Here the constant $\alpha>0$ is chosen to be large enough and the constants $\tau_j\in(\delta,1]$ and $\varepsilon\in(0,1]$ are to be determined later. 
    A straightforward computation shows that 
    \begin{align*}
        i\Theta_{A,h^{A}_{\alpha,\varepsilon,\tau}}&=i\Theta_{F,h_F}+i\sum^s_{j=1}(\tau_j-a_j)\Theta_{\mathcal{O}_X(D_j),h_{D_j}}\\
        &+i\sum^s_{j=1}\frac{2\alpha \Theta_{\mathcal{O}_X(D_j),h_{D_j}}}{\log(\varepsilon||\sigma_j||^2_{D_j})}+i\sum^s_{j=1}\frac{2\alpha\partial\log||\sigma_j||^2_{D_j}\wedge\overline{\partial}\log||\sigma_j||^2_{D_j}}{(\log(\varepsilon||\sigma_j||^2_{D_j}))^2}.
    \end{align*}

    Since $a_j\in(\delta,1]$, for a fixed large $\alpha$, we can choose $\tau_1,\cdots,\tau_s\in(\delta,1]$ and $\varepsilon$ such that $\tau_j-a_j,\varepsilon$ are small enough and 
    \begin{align*}
        -\frac{\gamma}{2}\omega\leq i\sum^s_{j=1}(\tau_j-a_j)\Theta_{\mathcal{O}_X(D_j),h_{D_j}}\leq \frac{\gamma}{2}\omega,\quad -\frac{\gamma}{2}\omega\leq i\sum^s_{j=1}\frac{2\alpha \Theta_{\mathcal{O}_X(D_j),h_{D_j}}}{\log(\varepsilon||\sigma_j||^2_{D_j})} \leq \frac{\gamma}{2}\omega.
    \end{align*}
    Note that the constants $\tau_j$ and $\varepsilon$ are thus fixed, and the choice of $\varepsilon$ depends on $\alpha$. 
    We have that $i\Theta_{A,h^{A}_{\alpha,\varepsilon,\tau}}\geq-\gamma\omega$ and $i\Theta_{A\otimes L,h^{A}_{\alpha,\varepsilon,\tau}\otimes h}\geq2\gamma\omega$. 
    We set 
    \begin{align*}
        \omega_Y=i\Theta_{A,h^{A}_{\alpha,\varepsilon,\tau}}+2\gamma\omega>0.
    \end{align*}
    It is easy to check that $\omega_Y$ is a Poincar\'e type \kah metric on $Y$. 
    By exactly the same argument as in the proof of Theorem \ref{L2-type Log Dolbeault isomorphism with sHm line}, when $\alpha$ is large enough, we obtain 
    \begin{align*}
        H^q(X,\Omega^p_X(\log D)\otimes\mathcal{O}_X(A\otimes L)\otimes\mathscr{I}(h))\cong H^q(\Gamma(X,\mathscr{L}^{p,\ast}_{A\otimes L,h^{A}_{\alpha,\varepsilon,\tau}\otimes h,\omega_Y})).
    \end{align*}

    Here, we get $i\Theta_{A\otimes L,h^{A}_{\alpha,\varepsilon,\tau}\otimes h}=i\Theta_{A,h^{A}_{\alpha,\varepsilon,\tau}}+i\Theta_{L,h}=\omega_Y-2\gamma\omega+i\Theta_{L,h}\geq\omega_Y+\gamma\omega>\omega_Y$
    in the sense of currents on $Y$, i.e. $h^{A}_{\alpha,\varepsilon,\tau}\otimes h$ is singular positive Hermitian metric on $Y$ which is uniformly bounded from below by $\omega_Y$.

    By Demailly's approximation to $h$ on $X$ (see \cite{Dem94}), there is a sequence of singular Hermitian metrics $(h_\nu)_{\nu\in\mathbb{N}}$ on $L$ over $X$ such that 
    \begin{itemize}
        \item [(i)] $h_\nu$ is smooth in the complement $X\setminus Z_\nu$ of an analytic set $Z_\nu\subset X$,
        \item [(ii)] $(h_\nu)_{\nu\in\mathbb{N}}$ is a increasing sequence and $h=\lim_{\nu\to+\infty}h_\nu$,
        \item [(iii)] $i\Theta_{L,h_\nu}\geq(3\gamma-\beta_\nu)\omega$ in the sense of currents, where $\lim_{\nu\to+\infty}\beta_\nu=0$.
    \end{itemize}
    Then for any enough large $\nu\in\mathbb{N}$, we get 
    \begin{align*}
        i\Theta_{A\otimes L,h^{A}_{\alpha,\varepsilon,\tau}\otimes h_\nu}
        =\omega_Y-2\gamma\omega+i\Theta_{L,h_\nu}\geq\omega_Y+(\gamma-\beta_\nu)\omega>\omega_Y
    \end{align*}
    in the sense of currents on $Y$, where this curvature inequality is smooth on $Y\setminus Z_\nu$.

    For any $\overline{\partial}$-closed $f\in \Gamma(X,\mathscr{L}^{n,q}_{A\otimes L,h^{A}_{\alpha,\varepsilon,\tau}\otimes h,\omega_Y})$, we get $\int_{Y}|f|^2_{h^{\mathscr{F}}_{\alpha,\varepsilon,\tau}\otimes h,\omega_Y}dV_{\omega_Y}<+\infty$ by compactness of $X$.
    Similarly to the proof of [Wat22b,\,Theorem\,4.6], there exists $u\in L^2_{n,q-1}(Y,A\otimes L,h^{A}_{\alpha,\varepsilon,\tau}\otimes h,\omega_Y)$ such that $\overline{\partial}u=f$ on $Y$ and 
    \begin{align*}
        \int_{Y}|u|^2_{h^{A}_{\alpha,\varepsilon,\tau}\otimes h,\omega_Y}dV_{\omega_Y}\leq\frac{2}{q}\int_{Y}|f|^2_{h^{A}_{\alpha,\varepsilon,\tau}\otimes h,\omega_Y}dV_{\omega_Y}.
    \end{align*}
    where $|u|^2_{h^{A}_{\alpha,\varepsilon,\tau}\otimes h,\omega_Y}$ is locally integrable on $Y$.
    From Lemma \ref{Ext d-equation for hypersurface}, 
    letting $u=0$ on $D$ then we have that $\overline{\partial}u=f$ on $X$ and $u\in\Gamma(X,\mathscr{L}^{n,q-1}_{\mathscr{F}\otimes L,h^{\mathscr{F}}_{\alpha,\varepsilon,\tau}\otimes h,\omega_Y})$. 
    
    Hence, we obtain $H^q(X,K_X(\log D)\otimes\mathcal{O}_X(A\otimes L)\otimes\mathscr{I}(h))=0$ for any $q\geq1$.

    Similarly proved in the case of $(p,n)$-forms.
\end{proof}

The following logarithmic vanishing theorem is shown similarly to the proof of Theorem \ref{Log V-thm for big in 4} as the case $\tau_j=1$ by using Theorem \ref{L2-type Log Dolbeault isomorphism with sHm line}.

\begin{theorem}\label{Log V-thm for big no nu condition}
    Let $X$ be a compact \kah manifold and $D$ be a simple normal crossing divisor in $X$. 
    Let $A$ be a holomorphic line bundle and $L$ be a holomorphic line bundle equipped with a singular Hermitian metric $h$ which is big. 
    If $A\otimes\mathcal{O}_X(D)$ is semi-positive then we have that 
    \begin{align*}
        H^q(X,K_X\otimes\mathcal{O}_X(D)\otimes A\otimes L\otimes \mathscr{I}(h))&=0,\\
        H^n(X,\Omega_X^p(\log D)\otimes A\otimes L\otimes \mathscr{I}(h))&=0
    \end{align*}
    for $p,q\geq 1$.
\end{theorem}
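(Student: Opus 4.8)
The plan is to carry out the argument of Theorem~\ref{Log V-thm for big in 4} in the special case $\Delta=D$, i.e. with all coefficients $a_j$ equal to $1$, feeding in the $\tau_j=1$ part of Theorem~\ref{L2-type Log Dolbeault isomorphism with sHm line} (the version with no Lelong-number hypothesis) instead of its refinement. Fix a \kah metric $\omega$ on $X$. Since $A\otimes\mathcal{O}_X(D)$ is semi-positive I would choose smooth Hermitian metrics $h_A$ on $A$ and $h_{D_j}=||\bullet||^2_{D_j}$ on $\mathcal{O}_X(D_j)$ so that the induced metric $h_F:=h_A\prod_{j=1}^s h_{D_j}$ on $F:=A\otimes\mathcal{O}_X(D)$ satisfies $i\Theta_{F,h_F}\geq0$. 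Because $h$ is big there is a constant $\gamma>0$ with $i\Theta_{L,h}\geq3\gamma\omega$ in the sense of currents; this strict positivity is the entire engine of the proof, since $F$ is only semi-positive and so, unlike in Theorem~\ref{Log V-thm of k-posi + psef in 4}, no positivity is available from the divisor side.

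Next I would equip $A|_Y$ with the smooth Hermitian metric
\begin{align*}
    h^A_{\alpha,\varepsilon}:=h_F\cdot\Bigl(\prod_{j=1}^s h_{D_j}\Bigr)^{-1}\cdot\prod_{j=1}^s||\sigma_j||^2_{D_j}\bigl(\log(\varepsilon||\sigma_j||^2_{D_j})\bigr)^{2\alpha},
\end{align*}
with $\alpha$ a large integer and $\varepsilon\in(0,1]$ to be fixed. Taking $\tau_j=1=a_j$ has the effect that in the curvature formula of Theorem~\ref{Log V-thm for big in 4} the term $i\sum_j(\tau_j-a_j)\Theta_{\mathcal{O}_X(D_j),h_{D_j}}$ disappears; shrinking $\varepsilon$ makes $i\sum_j 2\alpha\,\Theta_{\mathcal{O}_X(D_j),h_{D_j}}/\log(\varepsilon||\sigma_j||^2_{D_j})$ bounded in absolute value by $\gamma\omega$, while the remaining term $i\sum_j 2\alpha\,\partial\log||\sigma_j||^2_{D_j}\wedge\overline{\partial}\log||\sigma_j||^2_{D_j}/(\log(\varepsilon||\sigma_j||^2_{D_j}))^2$ is nonnegative and supplies exactly the Poincar\'e-type blow-up along $D$. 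Hence $i\Theta_{A,h^A_{\alpha,\varepsilon}}\geq-\gamma\omega$ on $Y$, so $i\Theta_{A\otimes L,h^A_{\alpha,\varepsilon}\otimes h}\geq2\gamma\omega$ as currents on $Y$, and I set $\omega_Y:=i\Theta_{A,h^A_{\alpha,\varepsilon}}+2\gamma\omega$, which is a \kah metric of Poincar\'e type along $D$. For $\alpha$ large, the $\tau_j=1$ case of Theorem~\ref{L2-type Log Dolbeault isomorphism with sHm line} then yields
\begin{align*}
    H^q\bigl(X,\Omega^p_X(\log D)\otimes\mathcal{O}_X(A\otimes L)\otimes\mathscr{I}(h)\bigr)\cong H^q\bigl(\Gamma(X,\mathscr{L}^{p,\ast}_{A\otimes L,h^A_{\alpha,\varepsilon}\otimes h,\omega_Y})\bigr),
\end{align*}
so everything is reduced to vanishing of the $L^2$-cohomology on the right for $q\geq1$ (and for the $(p,n)$-forms with $p\geq1$).

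For the vanishing I would note $i\Theta_{A\otimes L,h^A_{\alpha,\varepsilon}\otimes h}=\omega_Y-2\gamma\omega+i\Theta_{L,h}\geq\omega_Y+\gamma\omega>\omega_Y$ on $Y$, so $h^A_{\alpha,\varepsilon}\otimes h$ is singular positive and uniformly bounded below by $\omega_Y$. Using Demailly's regularization of $h$ on the compact $X$ (see \cite{Dem94}) I would take singular metrics $h_\nu$ on $L$, smooth off analytic subsets $Z_\nu$, increasing to $h$, with $i\Theta_{L,h_\nu}\geq(3\gamma-\beta_\nu)\omega$ and $\beta_\nu\to0$; then $i\Theta_{A\otimes L,h^A_{\alpha,\varepsilon}\otimes h_\nu}\geq\omega_Y$ on $Y$ for $\nu$ large, a smooth inequality off $Z_\nu$. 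Given a $\overline{\partial}$-closed $f\in\Gamma(X,\mathscr{L}^{n,q}_{A\otimes L,h^A_{\alpha,\varepsilon}\otimes h,\omega_Y})$ one has $\int_Y|f|^2_{h^A_{\alpha,\varepsilon}\otimes h,\omega_Y}\,dV_{\omega_Y}<+\infty$ by compactness of $X$; since $\omega_Y$ is complete, the argument of [Wat22b,\,Theorem\,4.6] on $Y$ produces $u\in L^2_{n,q-1}(Y,A\otimes L,h^A_{\alpha,\varepsilon}\otimes h,\omega_Y)$ with $\overline{\partial}u=f$ on $Y$ and $\int_Y|u|^2\leq\frac{2}{q}\int_Y|f|^2$. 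Extending $u$ by zero across $D$ and applying Lemma~\ref{Ext d-equation for hypersurface} gives $\overline{\partial}u=f$ on all of $X$ with $u\in\Gamma(X,\mathscr{L}^{n,q-1}_{A\otimes L,h^A_{\alpha,\varepsilon}\otimes h,\omega_Y})$, so the $L^2$-cohomology vanishes; the $(p,n)$-case is identical.

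The step I expect to be the main obstacle is reconciling the three competing demands on $\omega_Y$: it must be of Poincar\'e type (so that Theorem~\ref{L2-type Log Dolbeault isomorphism with sHm line} applies), it must be complete (so that the H\"ormander estimate is valid on the noncompact $Y$), and $h^A_{\alpha,\varepsilon}\otimes h$ must remain strictly $\omega_Y$-positive after the negative logarithmic curvature has been absorbed. The construction above achieves all three only because bigness of $L$ gives a fixed surplus $3\gamma\omega$ of curvature to spend, playing here the role that $k$-positivity of $A\otimes\mathcal{O}_X(\Delta)$ plays in Theorem~\ref{Log V-thm of k-posi + psef in 4}; handling the singularities of $h$ while keeping the inequality $\geq\omega_Y$ then forces the combined use of Demailly's approximation and the completeness of $\omega_Y$.
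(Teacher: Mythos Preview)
Your proposal is correct and follows essentially the same approach as the paper, which simply states that Theorem~\ref{Log V-thm for big no nu condition} is proved ``similarly to the proof of Theorem~\ref{Log V-thm for big in 4} as the case $\tau_j=1$ by using Theorem~\ref{L2-type Log Dolbeault isomorphism with sHm line}.'' You have carried out exactly this specialization: setting $a_j=\tau_j=1$ kills the $(\tau_j-a_j)$ curvature term, bigness of $L$ supplies the $3\gamma\omega$ surplus, and the Demailly approximation together with the completeness of the Poincar\'e-type $\omega_Y$ and Lemma~\ref{Ext d-equation for hypersurface} close the argument just as in Corollary~\ref{Log V-thm for big in 4}.
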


Theorem \ref{Log V-thm for big no nu condition} for $(n,q)$-forms is equivalent to Nadel vanishing by taking $A=\mathcal{O}_X(D)^*$.
Theorem \ref{Log V-thm for big no nu condition} for $(p,n)$-forms is a generalization of the following Bogomolov-Sommese logarithmic vanishing theorem for the case of big line bundles to that involving multiplier ideal sheaves.
In fact, if $L$ is big then $\kappa(L)=n$, and we can take $A$ as $\mathcal{O}_X(D)^*$.

\begin{theorem}$\mathrm{(Bogomolov}$-$\mathrm{Sommese~ logarithmic~ vanishing~ theorem,\,cf.}$ \cite{Bog78}$)$\label{Log V-thm for Bogomolove-Sommese type}
    Let $X$ be a projective manifold and $\mathcal{L}$ be an invertible sheaf on $X$.
    Then for any simple normal crossing divisor $D$, we have 
    \begin{align*}
        H^0(X,\Omega_X^a(\log D)\otimes \mathcal{L}^*)=0
    \end{align*}
    for $a<\kappa(\mathcal{L})$. In other words, we have 
    \begin{align*}
        H^n(X,\Omega_X^p(\log D)\otimes \mathcal{O}_X(D)^*\otimes \mathcal{L})=0
    \end{align*}
    for $p>n-\kappa(\mathcal{L})$.
\end{theorem}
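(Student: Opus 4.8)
The two displayed vanishings are interchanged by Serre duality together with the perfect wedge pairing $\Omega^p_X(\log D)\otimes\Omega^{n-p}_X(\log D)\to\Omega^n_X(\log D)=K_X\otimes\mathcal{O}_X(D)$, so it is enough to treat the first form: given a nonzero $s\in H^0(X,\Omega^a_X(\log D)\otimes\mathcal{L}^*)$ one must conclude $a\geq\kappa(\mathcal{L})$. Such an $s$ is the same datum as an injection $\mathcal{L}\hookrightarrow\Omega^a_X(\log D)$; replacing $\mathcal{L}$ by its saturation inside $\Omega^a_X(\log D)$ keeps the injection and does not decrease $\kappa$, so I may assume $\mathcal{L}$ is a saturated invertible subsheaf, and it suffices to prove $\kappa(\mathcal{L})\leq a$.

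The key special case is $\kappa(\mathcal{L})=n$, i.e.\ $\mathcal{L}$ big, and this falls directly out of Theorem \ref{Log V-thm for big no nu condition}. A big line bundle carries a singular Hermitian metric $h$ with $i\Theta_{\mathcal{L},h}\geq\varepsilon\omega$ for some $\varepsilon>0$ (Demailly; cf.~\cite{Dem93}), so taking $A:=\mathcal{O}_X(D)^{-1}=\mathcal{O}_X(-D)$ — for which $A\otimes\mathcal{O}_X(D)=\mathcal{O}_X$ is semi-positive — Theorem \ref{Log V-thm for big no nu condition} gives $H^n(X,\Omega^p_X(\log D)\otimes\mathcal{O}_X(D)^{-1}\otimes\mathcal{L}\otimes\mathscr{I}(h))=0$ for all $p\geq1$. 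The cokernel of $\Omega^p_X(\log D)\otimes\mathcal{O}_X(D)^{-1}\otimes\mathcal{L}\otimes\mathscr{I}(h)\hookrightarrow\Omega^p_X(\log D)\otimes\mathcal{O}_X(D)^{-1}\otimes\mathcal{L}$ is supported on the zero set of $\mathscr{I}(h)$, hence in dimension $\leq n-1$, so its $H^n$ vanishes; the long exact sequence then upgrades the vanishing to $H^n(X,\Omega^p_X(\log D)\otimes\mathcal{O}_X(D)^{-1}\otimes\mathcal{L})=0$ for $p\geq1$, which by Serre duality (using the identification $\Omega^p_X(\log D)^\vee\cong\Omega^{n-p}_X(\log D)\otimes K_X^{-1}\otimes\mathcal{O}_X(-D)$) is exactly $H^0(X,\Omega^a_X(\log D)\otimes\mathcal{L}^{-1})=0$ for every $a<n$ — contradicting the existence of $s$. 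So $\kappa(\mathcal{L})=:k<n$; and if $k\leq a$ we are done, so from now on assume $a<k<n$.

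For the remaining range I would run the classical Bogomolov--Sommese reduction to the base of the Iitaka fibration. Pass to a log-smooth modification $\mu\colon\widetilde X\to X$ on which the Iitaka fibration $\phi\colon\widetilde X\to Y$ of $\mu^*\mathcal{L}$ is a morphism onto a smooth projective $Y$ with $\dim Y=k$, choosing $\mu$ and an SNC divisor $\Sigma\subset Y$ so that $\widetilde D$, the exceptional locus and $\phi^{-1}(\Sigma)$ are simultaneously simple normal crossing; enlarging the log-polar divisor only enlarges $\Omega^a(\log\cdot)$, so $\mu^*\mathcal{L}\hookrightarrow\mu^*\Omega^a_X(\log D)\hookrightarrow\Omega^a_{\widetilde X}(\log\widetilde D)$ persists, with $\kappa(\mu^*\mathcal{L})=k$, while $(\mu^*\mathcal{L})^{\otimes m}\cong\phi^*\mathcal{M}\otimes\mathcal{O}_{\widetilde X}(Z)$ with $\mathcal{M}$ big on $Y$, $Z\geq0$ for suitable $m$, and $\mu^*\mathcal{L}|_F$ of Kodaira dimension $0$ on a general fibre $F$. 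The logarithmic relative cotangent sequence $0\to\phi^*\Omega^1_Y(\log\Sigma)\to\Omega^1_{\widetilde X}(\log\widetilde D)\to\Omega^1_{\widetilde X/Y}(\log)\to0$ and its exterior powers filter $\Omega^a_{\widetilde X}(\log\widetilde D)$ with graded pieces $\phi^*\Omega^i_Y(\log\Sigma)\otimes\Omega^{a-i}_{\widetilde X/Y}(\log)$; taking $j\leq a$ maximal with $\mu^*\mathcal{L}$ inside the $j$-th step, one obtains a nonzero $\mu^*\mathcal{L}\to\phi^*\Omega^j_Y(\log\Sigma)\otimes\Omega^{a-j}_{\widetilde X/Y}(\log)$. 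Raising to the $m$-th tensor power, twisting by $(\mu^*\mathcal{L})^{-m}$ and pushing forward along $\phi$ produces a nonzero global section over $Y$ of $(\Omega^j_Y(\log\Sigma))^{\otimes m}\otimes\mathcal{M}^{-1}\otimes\mathcal{G}$ for a nonzero torsion-free sheaf $\mathcal{G}$; isolating a rank-one piece and extracting an $m$-th root over the big open locus of $Y$ — the standard construction of \cite{Bog78} — yields an invertible subsheaf of $\Omega^j_Y(\log\Sigma)$ whose Kodaira dimension equals $\dim Y=k$, by the bigness of $\mathcal{M}$ together with the fibrewise triviality. Since $j\leq a<k=\dim Y<n$, this contradicts the big case already established (now applied on $Y$), so $a<k$ is impossible and $\kappa(\mathcal{L})\leq a$, completing the proof.

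I expect the descent in the third paragraph to be the real work: building the log-smooth, ``prepared'' model so that $\widetilde D$, $\operatorname{Exc}(\mu)$ and $\phi^{-1}(\Sigma)$ are simultaneously SNC, identifying the graded pieces of the logarithmic relative cotangent filtration over the locus where $\phi$ is not smooth, and — the crux — extracting from $\phi_*$ an invertible subsheaf of $\Omega^j_Y(\log\Sigma)$ itself (not merely of a tensor power) whose Kodaira dimension is forced up to $\dim Y$. All of this is classical and carried out in \cite{Bog78}. By contrast the base case $\kappa(\mathcal{L})=\dim X$ is immediate from Theorem \ref{Log V-thm for big no nu condition}, which is precisely why the Bogomolov--Sommese theorem is recorded here as a consequence.
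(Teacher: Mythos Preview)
The paper does not prove this theorem: it is stated with ``cf.~\cite{Bog78}'' as a classical result, and the only remark the paper makes is the sentence preceding the statement --- that Theorem~\ref{Log V-thm for big no nu condition} recovers the case $\kappa(\mathcal{L})=n$ by taking $A=\mathcal{O}_X(D)^*$. Your treatment of the big case is exactly this observation, carried out carefully (including the clean passage from $\mathscr{I}(h)$-twisted vanishing to untwisted vanishing via the top-degree cokernel argument), so on that part you and the paper agree completely.

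Your third paragraph, by contrast, goes well beyond anything the paper attempts: the paper simply defers the full statement to \cite{Bog78}, whereas you sketch the classical Iitaka-fibration descent. The outline you give --- pass to a prepared log-smooth model, filter $\Omega^a_{\widetilde X}(\log\widetilde D)$ by the logarithmic relative cotangent sequence, push the resulting map forward to produce a big invertible subsheaf of $\Omega^j_Y(\log\Sigma)$ on the $k$-dimensional base, and invoke the already-established top-dimensional case on $Y$ --- is the standard route (as in Bogomolov's original paper and the Esnault--Viehweg treatment). You are right to flag that the extraction step is where the genuine work lies; as written it is a sketch rather than a proof, but since the paper itself offers no argument here at all, there is nothing to compare against. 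If you want this paragraph to stand on its own you would need to make precise the ``$m$-th root'' construction and the verification that the resulting subsheaf on $Y$ really has maximal Kodaira dimension, but for the purposes of this paper --- where Theorem~\ref{Log V-thm for Bogomolove-Sommese type} is background, not a result being claimed --- your level of detail already exceeds what is present.
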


Finally, we obtain the following logarithmic vanishing for nef and big line bundles which is the Kawamata-Viehweg type.
This cohomology vanishing for $(n,q)$-forms also follows from the Kawamata-Viehweg vanishing theorem.

\begin{corollary}\label{Log V-thm for nef big}
    Let $X$ be a compact \kah manifold, $D=\sum^s_{j=1}D_j$ be a simple normal crossing divisor in $X$ and $L$ be a nef and big line bundle. 
    If there exist some real positive constants $a_j>0$ such that $\sum^s_{j=1}a_jD_j$ is semi-positive $\mathbb{R}$-divisor then we have that 
    \begin{align*}
        H^q(X,K_X\otimes\mathcal{O}_X(D)\otimes L)&=0,\\
        H^n(X,\Omega_X^p(\log D)\otimes L)&=0
    \end{align*}
    for $p,q\geq 1$.
\end{corollary}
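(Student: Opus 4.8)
The plan is to deduce this from Corollary~\ref{Log V-thm for big in 4}. Concretely, I want to manufacture out of the nef and big line bundle $L$ a big singular Hermitian metric $h_L$ whose multiplier ideal sheaf is trivial and whose Lelong numbers are uniformly small, and to rearrange the given semi-positive $\mathbb{R}$-divisor into the shape required there; the asserted vanishings then fall out because the twisting by $\mathscr{I}(h_L)$ and by the trivial line bundle changes nothing.

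First I would record the easy reduction on the divisor side. By hypothesis $\Delta_0:=\sum_{j=1}^s a_jD_j$ is a semi-positive $\mathbb{R}$-divisor with all $a_j>0$; since scaling an $\mathbb{R}$-divisor by a positive constant preserves semi-positivity of the induced $\mathbb{R}$-line bundle, after replacing $\Delta_0$ by $c\Delta_0$ for a suitable $c\in(0,1]$ I may assume $a_j\in(0,1]$ for every $j$. Set $\delta:=\frac12\min_j a_j\in(0,1)$, so that $a_j\in(\delta,1]$ and $2\delta\le 2$.

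Next comes the metric construction, which is the heart of the matter. Fix a \kah metric $\omega$ on $X$. Since $L$ is big it carries a singular Hermitian metric $h_b$ with analytic singularities and $i\Theta_{L,h_b}\ge\varepsilon_0\,\omega$ for some $\varepsilon_0>0$ in the sense of currents; because $h_b$ has analytic singularities, $M:=\sup_{x\in X}\nu(-\log h_b,x)<+\infty$. Since $L$ is nef, for every $\eta>0$ there is a smooth metric $h_\eta$ on $L$ with $i\Theta_{L,h_\eta}\ge-\eta\,\omega$. Now fix $t\in(0,1)$ with $tM<2\delta$, then fix $\eta<t\varepsilon_0/2$, and put $h_L:=h_\eta^{1-t}\cdot h_b^{t}$. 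An immediate computation gives $i\Theta_{L,h_L}=(1-t)\,i\Theta_{L,h_\eta}+t\,i\Theta_{L,h_b}\ge\frac{t\varepsilon_0}{2}\,\omega>0$ in the sense of currents, so $h_L$ is big in the sense of Definition~\ref{def of psef & big}, while $\nu(-\log h_L,x)=t\,\nu(-\log h_b,x)\le tM<2\delta$ for all $x\in X$; by Skoda's theorem recalled in the introduction this forces $\mathscr{I}(h_L)=\mathcal{O}_X$. I expect this step — arranging simultaneously the strict positivity of the curvature current and the uniform Lelong bound by interpolating the nef approximations with a fixed big metric and tuning $t$ and $\eta$ — to be the only real obstacle; everything else is bookkeeping.

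Finally I would apply Corollary~\ref{Log V-thm for big in 4} with $A=\mathcal{O}_X$ (with its trivial metric), with the big metric $h=h_L$ on $L$, with the constant $\delta$ above, and with the $\mathbb{R}$-divisor $\Delta:=\Delta_0=\sum_j a_jD_j$, whose coefficients lie in $(\delta,1]$: the Lelong condition $\nu(-\log h_L,x)<2\delta$ holds on all of $X$, and $A\otimes\mathcal{O}_X(\Delta)=\mathcal{O}_X(\Delta_0)$ is semi-positive. The corollary then yields
\[
H^q\bigl(X,K_X\otimes\mathcal{O}_X(D)\otimes A\otimes L\otimes\mathscr{I}(h_L)\bigr)=0,\qquad H^n\bigl(X,\Omega_X^p(\log D)\otimes A\otimes L\otimes\mathscr{I}(h_L)\bigr)=0
\]
for $p,q\ge1$, and since $A=\mathcal{O}_X$ and $\mathscr{I}(h_L)=\mathcal{O}_X$ these are precisely the two asserted vanishings. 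I would add the remark that Theorem~\ref{Log V-thm for big no nu condition} does not apply directly in this setting, since taking $A=\mathcal{O}_X$ there would require $A\otimes\mathcal{O}_X(D)=\mathcal{O}_X(D)$ to be semi-positive, which is false in general — this is exactly why the $\mathbb{R}$-coefficient refinement of Corollary~\ref{Log V-thm for big in 4} and the Lelong-number bookkeeping above are needed.
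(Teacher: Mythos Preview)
Your proposal is correct and follows essentially the same route as the paper: rescale the coefficients into $(0,1]$, pick $\delta$ below all of them, use nef\,$+$\,big to produce a singular metric on $L$ with strictly positive curvature current and Lelong numbers $<2\delta$ (hence $\mathscr{I}(h_L)=\mathcal{O}_X$), and invoke Corollary~\ref{Log V-thm for big in 4} with $A=\mathcal{O}_X$. The only difference is cosmetic: the paper cites [Dem10,\,Corollary\,6.19] for that metric, whereas you spell out the underlying interpolation $h_L=h_\eta^{1-t}h_b^{\,t}$ explicitly, and your particular choices of rescaling and of $\delta$ differ inessentially from the paper's.
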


\begin{proof}
    We can set $A=\mathcal{O}_X$ and $\Delta=\frac{1}{1+\sum^s_{j=1}a_j}\sum^s_{j=1}a_jD_j$. Then $A\otimes\mathcal{O}_X(\Delta)=\mathcal{O}_X(\Delta)$ is semi-positive $\mathbb{R}$-line bundle.
    Let $\omega$ be a \kah metric on $X$ and $0<\delta:=\min_j\frac{a_j}{2+\sum^s_{j=1}a_j}<\frac{a_j}{1+\sum^s_{j=1}a_j}<1$. Since $L$ is nef and big, $L$ has a singular Hermitian metric $h$ such that $\max_{x\in X}\nu(-\log h,x)<\delta$ and $i\Theta_{L,h}\geq\varepsilon\omega$ for some $\varepsilon>0$ (see [Dem10,\,Corollary\,6.19]).
    Hence, the proofs is complete from Corollary \ref{Log V-thm for big in 4}.
\end{proof}

\subsection{Logarithmic vanishing theorems involving $L^2$-subsheaves}

In this subsection, we get logarithmic vanishing theorems for singular Hermitian metrics on holomorphic vector bundles with (semi)-positivity in the sense of Griffiths and $L^2$-type Nakano.

\begin{theorem}\label{Log V-thm of k-posi + Grif}
    Let $X$ be a projective manifold and $D=\sum^s_{j=1}D_j$ be a simple normal crossing divisor in $X$. 
    Let $A$ be a holomorphic line bundle and $E$ be a holomorphic vector bundle equipped with a singular Hermitian metric $h$ which is Griffiths semi-positive. 
    If there exists $0<\delta\leq1$ such that $\nu(-\log \mathrm{det}\,h,x)<\delta$ for all points in $D$ and that for an $\mathbb{R}$-divisor $\Delta=\sum^s_{j=1}a_jD_j$ with $a_j\in(\delta,1]$, the $\mathbb{R}$-line bundle $A\otimes\mathcal{O}_X(\Delta)$ is $k$-positive. 
    Then for any nef line bundle $N$, we have that 
    \begin{align*}
        H^q(X,K_X\otimes\mathcal{O}_X(D)\otimes A\otimes N\otimes \mathscr{E}(h\otimes\mathrm{det}\,h))&=0,\\
        H^n(X,\Omega_X^p(\log D)\otimes A\otimes N\otimes \mathscr{E}(h\otimes\mathrm{det}\,h))&=0
    \end{align*}
    for $p,q\geq k$.
\end{theorem}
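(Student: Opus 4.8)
The plan is to follow the proof of Theorem \ref{Log V-thm of k-posi + psef in 4} verbatim in its structure, replacing the line bundle $L$ and the isomorphism of Theorem \ref{L2-type Log Dolbeault isomorphism with sHm line} by the vector bundle $E$ and Theorem \ref{L2-type Log Dolbeault isomorphism with sHm vector}, and replacing the $L^2$-estimate Theorem \ref{L2-estimate of k-posi + psef} by its Griffiths counterpart Theorem \ref{L2-estimate of k-posi + Grif} adapted to $Y$. First I would fix a \kah metric $\omega$ on $X$ and set $F:=A\otimes\mathcal{O}_X(\Delta)$; by $k$-positivity there are smooth metrics $h_A$ on $A$ and $h_j$ on $\mathcal{O}_X(D_j)$ so that $h_F=h_A\prod_j h_j^{a_j}$ has curvature with at least $n-k+1$ positive eigenvalues, and I set $c_0:=\min_{x\in X}\lambda^k_{\omega,h_F}(x)>0$. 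Using nefness of $N$, for small $\gamma\in(0,1)$ choose $h_{N,\gamma}$ with $i\Theta_{N,h_{N,\gamma}}>-\gamma\omega$. With the defining sections $\sigma_j$, fixed metrics $h_{D_j}=\|\bullet\|^2_{D_j}$ and $h_\Delta=\prod_j h_{D_j}^{a_j}$, I define on $\mathscr{F}:=A\otimes N$ over $Y$ the smooth metric
\[
h^{\mathscr{F}}_{\alpha,\varepsilon,\tau}:=h_{N,\gamma}\cdot h_F\cdot(h_\Delta)^{-1}\cdot\prod_{j=1}^s\|\sigma_j\|^{2\tau_j}_{D_j}\bigl(\log(\varepsilon\|\sigma_j\|^2_{D_j})\bigr)^{2\alpha},
\]
with $\alpha$ large, $\tau_j\in(\delta,1]$ close to $a_j$, and $\varepsilon$ small. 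Exactly as in Theorem \ref{Log V-thm of k-posi + psef in 4}, these parameters can be tuned so that $\omega_Y:=i\Theta_{\mathscr{F},h^{\mathscr{F}}_{\alpha,\varepsilon,\tau}}+2(4n+1)\gamma\omega$ is a Poincar\'e type \kah metric on $Y$, and, upon setting $\gamma=c_0/32n^2$, the eigenvalues of $i\Theta_{\mathscr{F},h^{\mathscr{F}}_{\alpha,\varepsilon,\tau}}$ with respect to $\omega_Y$ lie in $[-1/4n,1)$ and are $\geq 1-1/4n$ for index $\geq k$.

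Next, since $\nu(-\log\mathrm{det}\,h,x)<\delta$ on $D$ and $\tau_j\in(\delta,1]$, and since $h^{\mathscr{F}}_{\alpha,\varepsilon,\tau}$ is equivalent along $D$ to a metric of the form required in Theorem \ref{L2-type Log Dolbeault isomorphism with sHm vector} (the factors $h_{N,\gamma},h_F,h_\Delta$ being smooth on $X$), that theorem, applied with $F$ replaced by $\mathscr{F}$, gives the $L^2$-type Dolbeault isomorphism
\[
H^q\bigl(X,\Omega^p_X(\log D)\otimes A\otimes N\otimes\mathscr{E}(h\otimes\mathrm{det}\,h)\bigr)\cong H^q\bigl(\Gamma(X,\mathscr{L}^{p,\ast}_{\mathscr{F}\otimes E\otimes\mathrm{det}\,E,\,h^{\mathscr{F}}_{\alpha,\varepsilon,\tau}\otimes h\otimes\mathrm{det}\,h,\,\omega_Y})\bigr).
\]
The same Bochner-type eigenvalue estimate as in Theorem \ref{Log V-thm of k-posi + psef in 4} gives $A^{p,q}_{\mathscr{F},h^{\mathscr{F}}_{\alpha,\varepsilon,\tau},\omega_Y}\geq 1/2$, hence $B_{h^{\mathscr{F}}_{\alpha,\varepsilon,\tau},\omega_Y}:=[i\Theta_{\mathscr{F},h^{\mathscr{F}}_{\alpha,\varepsilon,\tau}}\otimes\mathrm{id}_{E\otimes\mathrm{det}\,E},\Lambda_{\omega_Y}]\geq 1/2$ on $(p,q)$-forms with $p+q\geq n+k$.

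Then I would run the $L^2$-estimate. Recall that $h\otimes\mathrm{det}\,h$ is $L^2$-type Nakano semi-positive by \cite{Ina22}, so Theorem \ref{L2-estimate of k-posi + Grif} is the relevant tool; since it is stated on a weakly pseudoconvex manifold with an auxiliary positive $\mathcal{O}_X(D)$, whereas $Y$ need not be weakly pseudoconvex, I would first re-derive it on $Y$ exactly as in Remark \ref{Rem for L-estimate k-posi + psef}: $Y$ carries the complete \kah metric $\omega_Y$, and, $X$ being projective (hence compact), Demailly's regularization supplies smooth Griffiths semi-positive metrics $h_\nu$ increasing to $h$ on all of $X$, so running the estimate with $h_\nu$ and letting $\nu\to\infty$ yields the solution operator on $Y$. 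Given a $\overline{\partial}$-closed $f\in\Gamma(X,\mathscr{L}^{n,q}_{\mathscr{F}\otimes E\otimes\mathrm{det}\,E,\,h^{\mathscr{F}}_{\alpha,\varepsilon,\tau}\otimes h\otimes\mathrm{det}\,h,\,\omega_Y})$ with $q\geq k$, compactness of $X$ makes it $L^2$ on $Y$, so there is $u$ on $Y$ with $\overline{\partial}u=f$ and finite $L^2$ norm controlled by $B^{-1}$; extending $u$ by $0$ across $D$ and invoking Lemma \ref{Ext d-equation for hypersurface} gives $\overline{\partial}u=f$ on $X$ with $u\in\Gamma(X,\mathscr{L}^{n,q-1})$, whence the first cohomology group vanishes. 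The $(p,n)$-case is identical, using part $(b)$ of Theorem \ref{L2-estimate of k-posi + Grif}.

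The main obstacle is precisely this global $L^2$-estimate on the non-compact $Y$: Theorem \ref{L2-estimate of k-posi + Grif} as stated does not apply directly, and the crux is to re-obtain it on $Y$ by combining completeness of $\omega_Y$ with Demailly's approximation of the singular Griffiths semi-positive metric on the compact ambient $X$ — the vector bundle analogue of Remark \ref{Rem for L-estimate k-posi + psef}. Once that point is secured, the remaining steps are the bookkeeping already carried out in \S 4.1.
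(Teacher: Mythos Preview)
Your overall architecture matches the paper's: construct $h^{\mathscr{F}}_{\alpha,\varepsilon,\tau}$ and $\omega_Y$ exactly as in Theorem \ref{Log V-thm of k-posi + psef in 4}, invoke Theorem \ref{L2-type Log Dolbeault isomorphism with sHm vector} for the Dolbeault isomorphism, and obtain $B_{h^{\mathscr{F}}_{\alpha,\varepsilon,\tau},\omega_Y}\geq 1/2$ for $p+q\geq n+k$. The divergence is in how the global $L^2$-estimate is run, and this is where your proposal has a genuine gap.

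The paper does \emph{not} attempt a vector-bundle analogue of Remark \ref{Rem for L-estimate k-posi + psef}. Instead it uses projectivity directly: choose a hypersurface $H\subset X$ with $X\setminus H$ Stein, so that $Y_H:=Y\setminus H$ is Stein (hence weakly pseudoconvex), and apply Theorem \ref{L2-estimate of k-posi + Grif} on $Y_H$. The resulting $u$ is extended by zero across $H\cup D$ via Lemma \ref{Ext d-equation for hypersurface}. For the $(p,n)$-case the paper additionally records that $h\otimes\det h$ is dual Nakano semi-positive (by [Wat22b,\,Theorem\,1.5]) and invokes the dual-Nakano $L^2$-estimate [Wat22b,\,Theorem\,4.9], not part $(b)$ of Theorem \ref{L2-estimate of k-posi + Grif}.

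Your proposed route fails at precisely the step you flag as the main obstacle. Demailly's regularization \cite{Dem94} is a theorem about closed positive $(1,1)$-currents, i.e.\ singular metrics on \emph{line} bundles; there is no global analogue on a compact manifold that produces smooth Griffiths semi-positive Hermitian metrics $h_\nu$ on a vector bundle increasing to $h$. The approximation used elsewhere in this paper for vector bundles (e.g.\ in the proof of Proposition \ref{L2-type Log Dol isom at q=0 with sHm vector}, via [Wat22b,\,Proposition\,3.15]) is only local. More to the point, $L^2$-type Nakano semi-positivity (Definition \ref{def Nakano semi-posi sing}) is \emph{defined} through solvability of $\overline{\partial}$ on Stein coordinate patches, so passing to a Stein open is the natural way to access it globally; this is exactly why projectivity is assumed in the present theorem but not in Theorem \ref{Log V-thm of k-posi + psef}.
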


\begin{proof}
    Let $\omega$ be a \kah metric on $X$ and $Y:=X\setminus D$. 
    Similar to the proof of Theorem \ref{Log V-thm of k-posi + psef in 4}, there exist a smooth Hermitian metric $h^{\mathscr{F}}_{\alpha,\varepsilon,\tau}$ on $\mathscr{F}:=A\otimes N$ over $Y$ and a Poincar\'e type \kah metric 
    $\omega_Y=i\Theta_{\mathscr{F},h^{\mathscr{F}}_{\alpha,\varepsilon,\tau}}+2(4n+1)\gamma\omega$ on $Y$ where $\gamma\in(0,1)$ and we get the following: 
    (I) An $L^2$-type Log Dolbeault isomorphism 
    \begin{align*}
        H^q(X,\Omega^p_X(\log D)\otimes A\otimes N\otimes\mathscr{E}(h\otimes\mathrm{det}\,h))\cong H^q(\Gamma(X,\mathscr{L}^{p,\ast}_{\mathscr{F}\otimes E\otimes\mathrm{det}\,E,h^{\mathscr{F}}_{\alpha,\varepsilon,\tau}\otimes h\otimes\mathrm{det}\,h})).
    \end{align*}
    (II) For any $(p,q)$-forms in $L^2_{p,q}(Y,\mathscr{F},h^{\mathscr{F}}_{\alpha,\varepsilon,\tau},\omega_Y)$ with $p+q\geq n+k$, we get 
    \begin{align*}
        B_{h^{\mathscr{F}}_{\alpha,\varepsilon,\tau},\omega_Y}=[i\Theta_{\mathscr{F},h^{\mathscr{F}}_{\alpha,\varepsilon,\tau}}\otimes\mathrm{id}_{E\otimes\mathrm{det}\,E},\Lambda_{\omega_Y}]\geq \frac{1}{2}.
    \end{align*}

    Here, $h\otimes\mathrm{det}\,h$ is $L^2$-type Nakano semi-positive (see [Ina22,\,Theorem\,1.3]) and dual Nakano semi-positive (see [Wat22b,\,Theorem\,1.5]) by Griffiths semi-positivity of $h$.
    By projectivity of $X$, there exists a hypersurface $H$ such that $X\setminus H$ is Stein. Thus, $Y_H:=Y\setminus H=(X\setminus H)\setminus D$ is also Stein.
    For any $\overline{\partial}$-closed $f\in\Gamma(X,\mathscr{L}^{n,q}_{\mathscr{F}\otimes E\otimes\mathrm{det}\,E,h^{\mathscr{F}}_{\alpha,\varepsilon,\tau}\otimes h\otimes\mathrm{det}\,h})$, 
    \begin{align*}
        \int_{Y_H}\langle B_{h^{\mathscr{F}}_{\alpha,\varepsilon,\tau},\omega_Y}^{-1}f,f\rangle_{h^{\mathscr{F}}_{\alpha,\varepsilon,\tau}\otimes h\otimes\mathrm{det}\,h,\omega_Y}dV_{\omega_Y}\leq2\int_{Y_H}|f|^2_{h^{\mathscr{F}}_{\alpha,\varepsilon,\tau}\otimes h\otimes\mathrm{det}\,h,\omega_Y}dV_{\omega_Y}<+\infty,
    \end{align*}
    i.e. $f\in L^2_{n,q}(Y_H,\mathscr{F}\otimes E\otimes\mathrm{det}\,E,h^{\mathscr{F}}_{\alpha,\varepsilon,\tau}\otimes h\otimes\mathrm{det}\,h,\omega_Y)$. 
    
    Since Theorem \ref{L2-estimate of k-posi + Grif}, for any $q\geq k$ there exists $u\in L^2_{n,q-1}(Y_H,\mathscr{F}\otimes E\otimes\mathrm{det}\,E,h^{\mathscr{F}}_{\alpha,\varepsilon,\tau}\otimes h\otimes\mathrm{det}\,h,\omega_Y)$ such that $\overline{\partial}u=f$ on $Y_H$ and 
    \begin{align*}
        \int_{Y_H}|u|^2_{h^{\mathscr{F}}_{\alpha,\varepsilon,\tau}\otimes h\otimes\mathrm{det}\,h,\omega_Y}dV_{\omega_Y}\leq\int_{Y_H}\langle B_{h^{\mathscr{F}}_{\alpha,\varepsilon,\tau},\omega_Y}^{-1}f,f\rangle_{h^{\mathscr{F}}_{\alpha,\varepsilon,\tau}\otimes h\otimes\mathrm{det}\,h,\omega_Y}dV_{\omega_Y}<+\infty,
    \end{align*}
    where $|u|^2_{h^{\mathscr{F}}_{\alpha,\varepsilon,\tau}\otimes h\otimes\mathrm{det}\,h,\omega_Y}$ is locally integrable on $Y_H$.
    From Lemma \ref{Ext d-equation for hypersurface}, letting $u=0$ on $H\cup D$ then we have that $u\in\Gamma(X,\mathscr{L}^{n,q-1}_{\mathscr{F}\otimes E\otimes\mathrm{det}\,E,h^{\mathscr{F}}_{\alpha,\varepsilon,\tau}\otimes h\otimes\mathrm{det}\,h,\omega_Y})$ and $\overline{\partial}u=f$ on $X$. 
    
    Hence, we obtain $H^q(X,K_X(\log D)\otimes A\otimes N\otimes\mathscr{E}(h\otimes\mathrm{det}\,h))=0$ for any $q\geq k$.

    Similarly proved in the case of $(p,n)$-forms by using $L^2$-estimates for dual Nakano semi-positivity (see [Wat22b,\,Theorem\,4.9]).
\end{proof}

Theorem \ref{Log V-thm of k-posi + Grif no nu condition} is shown similarly to the proof of Theorem \ref{Log V-thm of k-posi + Grif} as the case $\tau_j=1$ by using Theorem \ref{L2-type Log Dolbeault isomorphism with sHm vector}.
As the case of vector bundles analogous to big line bundles, we obtain the following logarithmic vanishing theorems for strictly Griffiths $\rho_\omega$-positivity.

\begin{corollary}\label{Log V-thm of semi-posi + s-Grif}
    Let $X$ be a projective manifold, $\omega$ be a \kah metric on $X$ and $D=\sum^s_{j=1}D_j$ be a simple normal crossing divisor in $X$. 
    Let $A$ be a holomorphic line bundle and $E$ be a holomorphic vector bundle equipped with a singular Hermitian metric $h$. 
    We assume that $h$ is strictly Griffiths $\rho_\omega$-positive and that there exists $0<\delta\leq1$ such that $\nu(-\log \mathrm{det}\,h,x)<\delta$ for all points in $D$ and that for an $\mathbb{R}$-divisor $\Delta=\sum^s_{j=1}a_jD_j$ with $a_j\in(\delta,1]$, the $\mathbb{R}$-line bundle $A\otimes\mathcal{O}_X(\Delta)$ is semi-positive. 
    Then we have that
    \begin{align*}
        H^q(X,K_X\otimes\mathcal{O}_X(D)\otimes A\otimes \mathscr{E}(h\otimes\mathrm{det}\,h))&=0,\\
        H^n(X,\Omega_X^p(\log D)\otimes A\otimes \mathscr{E}(h\otimes\mathrm{det}\,h))&=0
    \end{align*}
    for $p,q\geq 1$.
\end{corollary}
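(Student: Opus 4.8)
The plan is to run the proof of Corollary~\ref{Log V-thm for big in 4} (the big line bundle case) with three systematic replacements: the bigness of the line bundle metric $h$ is replaced by strict Griffiths $\rho_\omega$-positivity of the vector bundle metric $h$; the multiplier ideal sheaf $\mathscr{I}(h)$ is replaced by the $L^2$-subsheaf $\mathscr{E}(h\otimes\mathrm{det}\,h)$; and the $L^2$-type Log Dolbeault isomorphism of Theorem~\ref{L2-type Log Dolbeault isomorphism with sHm line} is replaced by that of Theorem~\ref{L2-type Log Dolbeault isomorphism with sHm vector}. The mechanism that makes the substitution legitimate is Theorems~\ref{h s-Gri then h * det h s-Nak} and~\ref{h s-Gri then h * det h s-dual Nak}: strict Griffiths $\rho_\omega$-positivity of $h$ forces $h\otimes\mathrm{det}\,h$ on $E\otimes\mathrm{det}\,E$ to be both $L^2$-type strictly Nakano $(r+1)\rho_\omega$-positive and strictly dual Nakano $(r+1)\rho_\omega$-positive, where $r=\mathrm{rank}\,E$. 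This strict positivity supplies, after twisting, a curvature term bounded below by a complete \kah metric, which is precisely what bigness of $h$ provided in Corollary~\ref{Log V-thm for big in 4}; this is what lets the hypothesis on $A\otimes\mathcal{O}_X(\Delta)$ be weakened from $k$-positivity down to semi-positivity (so that effectively $k=1$, giving the range $p,q\geq1$).

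Concretely, fix a \kah metric $\omega$ on $X$, put $Y:=X\setminus D$ and $F:=A\otimes\mathcal{O}_X(\Delta)$, set $\rho_0:=(r+1)\rho$, and fix $\gamma>0$ with $3\gamma\leq\rho_0$ (shrinking $\gamma$ if needed so that $\gamma<1$). Since $F$ is semi-positive, choose smooth Hermitian metrics $h_A$ on $A$ and $h_{D_j}=||\bullet||^2_{D_j}$ on $\mathcal{O}_X(D_j)$ with $h_F:=h_A\prod_{j=1}^s h_{D_j}^{a_j}$ and $i\Theta_{F,h_F}\geq0$, and set $h_\Delta:=\prod_{j=1}^s h_{D_j}^{a_j}$. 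Following the construction in the proof of Corollary~\ref{Log V-thm for big in 4}, for $\alpha$ large and suitable $\tau_j\in(\delta,1]$ near $a_j$ and $\varepsilon\in(0,1]$ small, the smooth Hermitian metric
\[
h^{A}_{\alpha,\varepsilon,\tau}:=h_F\cdot(h_\Delta)^{-1}\cdot\prod_{j=1}^s||\sigma_j||^{2\tau_j}_{D_j}\bigl(\log(\varepsilon||\sigma_j||^2_{D_j})\bigr)^{2\alpha}
\]
on $A|_Y$ satisfies $i\Theta_{A,h^{A}_{\alpha,\varepsilon,\tau}}\geq-\gamma\omega$, so that $\omega_Y:=i\Theta_{A,h^{A}_{\alpha,\varepsilon,\tau}}+2\gamma\omega$ is a Poincar\'e type \kah metric on $Y$. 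The Lelong number hypothesis available here is exactly $\nu(-\log\mathrm{det}\,h,x)<\delta$ with $\tau_j\in(\delta,1]$, which is the hypothesis required by Theorem~\ref{L2-type Log Dolbeault isomorphism with sHm vector}; arguing as in its proof with $\omega_Y$ in place of $\omega_P$, for $\alpha$ large enough we obtain
\[
H^q\bigl(X,\Omega^p_X(\log D)\otimes A\otimes\mathscr{E}(h\otimes\mathrm{det}\,h)\bigr)\cong H^q\bigl(\Gamma(X,\mathscr{L}^{p,\ast}_{A\otimes E\otimes\mathrm{det}\,E,\,h^{A}_{\alpha,\varepsilon,\tau}\otimes h\otimes\mathrm{det}\,h,\,\omega_Y})\bigr).
\]

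Next I would compute curvature: since $i\Theta_{A,h^{A}_{\alpha,\varepsilon,\tau}}=\omega_Y-2\gamma\omega$ and $h\otimes\mathrm{det}\,h$ is $L^2$-type strictly Nakano $\rho_0$-positive, the twisted metric $h^{A}_{\alpha,\varepsilon,\tau}\otimes h\otimes\mathrm{det}\,h$ on $A\otimes E\otimes\mathrm{det}\,E$ is, in the $L^2$-estimate sense of Definition~\ref{def Nakano semi-posi sing}, positively curved with curvature bounded below by $(\omega_Y+(\rho_0-2\gamma)\omega)\otimes\mathrm{id}>\omega_Y\otimes\mathrm{id}$; in particular the associated operator $B$ on $(n,q)$-forms is $\geq q$. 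By projectivity of $X$ one removes a hypersurface $H$ with $X\setminus H$ Stein, so that $Y_H:=Y\setminus(H\cup D)$ is Stein; on $Y_H$, applying the $L^2$-estimate for $L^2$-type Nakano positive metrics (Theorem~\ref{L2-estimate of k-posi + Grif} and its proof, now with the positivity coming from $h\otimes\mathrm{det}\,h$ rather than from a $k$-positive line bundle), every $\overline{\partial}$-closed $f\in\Gamma(X,\mathscr{L}^{n,q}_{A\otimes E\otimes\mathrm{det}\,E,\,h^{A}_{\alpha,\varepsilon,\tau}\otimes h\otimes\mathrm{det}\,h,\,\omega_Y})$ --- which is $L^2$ on $Y_H$ because $X$ is compact --- admits $u$ with $\overline{\partial}u=f$ on $Y_H$ and $\int_{Y_H}|u|^2\leq\tfrac1q\int_{Y_H}|f|^2<+\infty$. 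Lemma~\ref{Ext d-equation for hypersurface} then lets us extend $u$ by $0$ across $H\cup D$, giving $\overline{\partial}u=f$ on $X$ and $u\in\Gamma(X,\mathscr{L}^{n,q-1}_{\cdots})$; hence $H^q(X,K_X\otimes\mathcal{O}_X(D)\otimes A\otimes\mathscr{E}(h\otimes\mathrm{det}\,h))=0$ for $q\geq1$. The $(p,n)$-form vanishing is obtained by the identical argument, using the $L^2$-estimate for strictly dual Nakano positive metrics (cf.\ [Wat22b,\,Theorem\,4.9]) in place of the Nakano one, which is available because $h\otimes\mathrm{det}\,h$ is strictly dual Nakano $\rho_0$-positive by Theorem~\ref{h s-Gri then h * det h s-dual Nak}.

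The main obstacle is the rigorous use of the $L^2$-estimates for the merely \emph{singular} metric $h\otimes\mathrm{det}\,h$: one cannot manipulate its curvature current pointwise, so the inequality ``curvature $\geq\omega_Y$'' must be handled through the $L^2$-estimate condition of Definition~\ref{def Nakano semi-posi sing} together with a Demailly-type regularization of $h$, exactly as in [Wat22b] and in Corollary~\ref{Log V-thm for big in 4}; the Stein-ification step is the device (as in the proof of Theorem~\ref{Log V-thm of k-posi + Grif}) that makes that condition applicable, since $Y$ with its Poincar\'e type metric need not be weakly pseudoconvex (cf.\ Remark~\ref{Rem for L-estimate k-posi + psef}). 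A secondary, routine point is to check, as $\alpha\to\infty$, that $h^{A}_{\alpha,\varepsilon,\tau}$ stays smooth on $Y$, that $\omega_Y$ stays Poincar\'e type, and that the $\tau_j$ can be chosen in $(\delta,1]$ compatibly with $a_j\in(\delta,1]$ and the Lelong bound on $\mathrm{det}\,h$; all of these are already carried out for Corollary~\ref{Log V-thm for big in 4} and Theorem~\ref{L2-type Log Dolbeault isomorphism with sHm vector}.
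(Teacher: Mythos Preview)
Your approach is essentially the paper's: construct $h^A_{\alpha,\varepsilon,\tau}$ and a Poincar\'e type $\omega_Y$ as in Corollary~\ref{Log V-thm for big in 4}, invoke Theorem~\ref{L2-type Log Dolbeault isomorphism with sHm vector} for the Dolbeault isomorphism, pass to a Stein open set via projectivity, solve $\overline{\partial}$ there using the strict Nakano/dual Nakano positivity of $h\otimes\mathrm{det}\,h$ from Theorems~\ref{h s-Gri then h * det h s-Nak}--\ref{h s-Gri then h * det h s-dual Nak}, and extend across the removed hypersurfaces by Lemma~\ref{Ext d-equation for hypersurface}.

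The one place where the paper is more explicit than you is precisely the step you flag as the ``main obstacle''. You invoke Theorem~\ref{L2-estimate of k-posi + Grif}, but that theorem requires a $k$-positive line bundle factor, which you do not have here (your $h^A_{\alpha,\varepsilon,\tau}$ only satisfies $i\Theta_{A,h^A_{\alpha,\varepsilon,\tau}}\geq -\gamma\omega$). The paper's workaround is to apply Definition~\ref{def Nakano semi-posi sing} directly: it first constructs a \kah potential $\psi$ of $\omega_Y$ by choosing a global section $\sigma_A\in H^0(Y_H,A)$ (available since $Y_H$ is Stein), setting $Z=\{\sigma_A=0\}$ and $\psi=-\log||\sigma_A||^2_{h^A_{\alpha,\varepsilon,\tau}}+2\rho\varphi$ on $Y_{HZ}:=Y_H\setminus Z$. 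One then checks that $h\otimes\mathrm{det}\,h\otimes h^A_{\alpha,\varepsilon,\tau}e^{\psi}$ is $L^2$-type Nakano semi-positive on $Y_{HZ}$ (the point being $i\Theta$ of the residual smooth line-bundle factor equals $(r-1)\rho\omega\geq0$), and finally feeds the trivial line bundle with metric $e^{-\psi}$ (whose curvature is $\omega_Y$) into Definition~\ref{def Nakano semi-posi sing} to get the $L^2$-estimate with $B=q$. This costs a \emph{second} hypersurface removal ($Z$ in addition to $H$), which your sketch omits; once added, the extension step via Lemma~\ref{Ext d-equation for hypersurface} is across $Z\cup H\cup D$ rather than $H\cup D$. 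With this adjustment your proof goes through and matches the paper's.
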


\begin{proof}
    Similar to the proof of Corollary \ref{Log V-thm for big in 4}, there exist a smooth Hermitian metric $h^{A}_{\alpha,\varepsilon,\tau}$ on $A$ over $Y$ such that $i\Theta_{A,h^{A}_{\alpha,\varepsilon,\tau}}\geq-\rho\omega$ on $Y$
    and a Poincar\'e type \kah metric $\omega_Y=i\Theta_{A,h^{A}_{\alpha,\varepsilon,\tau}}+2\rho\omega$ on $Y$, 
    and we get the following $L^2$-type Log Dolbeault isomorphism 
    \begin{align*}
        H^q(X,\Omega^p_X(\log D)\otimes A\otimes\mathscr{E}(h\otimes\mathrm{det}\,h))\cong H^q(\Gamma(X,\mathscr{L}^{p,\ast}_{A\otimes E\otimes\mathrm{det}\,E,h^{A}_{\alpha,\varepsilon,\tau}\otimes h\otimes\mathrm{det}\,h})).
    \end{align*}

    By strictly Griffiths $\rho_\omega$-positivity of $h$, the singular Hermitian metric $h\otimes\mathrm{det}\,h$ is $L^2$-type strictly Nakano $(r+1)\rho_\omega$-positive by Theorem \ref{h s-Gri then h * det h s-Nak} and strictly dual Nakano $(r+1)\rho_\omega$-positive by Theorem \ref{h s-Gri then h * det h s-dual Nak}, where $\mathrm{rank}\,E=r$.

    We show that $h\otimes\mathrm{det}\,h\otimes h^{A}_{\alpha,\varepsilon,\tau}$ is $L^2$-type strictly Nakano $1_{\omega_Y}$-positive and strictly dual Nakano $1_{\omega_Y}$-positive on $Y$.
    Here, for any open subset $U\subset Y$ and any \kah potential $\varphi$ of $\omega$ on $U$, $h\otimes\mathrm{det}\,he^{(r+1)\rho\varphi}$ is $L^2$-type Nakano semi-positive and dual Nakano semi-positive. 
    For any \kah potential $\psi$ of $\omega_Y$ on $U$, we define the smooth Hermitian metric $h_A:=h^{A}_{\alpha,\varepsilon,\tau}e^{\psi-(r+1)\rho\varphi}$ on $A$ over $U$ then $h_A$ is semi-positive.
    In fact, 
    \begin{align*}
        i\Theta_{A,h_A}=i\Theta_{A,h^{A}_{\alpha,\varepsilon,\tau}}-\omega_Y+(r+1)\rho\omega=(r-1)\rho\omega\geq0.
    \end{align*}
    From [Wat22b,\,Theorem\,3.18], the singular Hermitian metric $h\otimes\mathrm{det}\,h\otimes h^{A}_{\alpha,\varepsilon,\tau}e^\psi=h\otimes$ $\mathrm{det}\,he^{(r+1)\rho\varphi}\otimes h_{A}$ is $L^2$-type Nakano semi-positive and dual Nakano semi-positive on $U$\!.

    By projectivity of $X$, there exists a hypersurface $H$ such that $X\setminus H$ is Stein and that $\omega$ has a \kah potential $\varphi$ on $X\setminus H$. Thus, $Y_H:=Y\setminus H=(X\setminus H)\setminus D$ is also Stein.
    Here, there exists a hypersurface $Z$ such that $Y_{HZ}:=Y_H\setminus Z$ is also Stein and that $\omega_Y$ has a \kah potential $\psi$ on $Y_{HZ}$, i.e. $\idd\psi=\omega_Y$.
    
    In fact, $\omega_Y=i\Theta_{A,h^{A}_{\alpha,\varepsilon,\tau}}+2\rho\omega=i\Theta_{A,h^{A}_{\alpha,\varepsilon,\tau}e^{-2\rho\varphi}}$ on $Y_H$.
    We take a global holomorphic section $\sigma_A\in H^0(Y_H,A)$ by Steinness of $Y_H$ and put the hypersurface $Z:=\{z\in Y_H\mid\sigma_A(z)=0\}$. For a smooth Hermitian metric $h^{A}_{\alpha,\varepsilon,\tau}e^{-2\rho\varphi}$ on $A$ over $Y_H$, the norm $||\sigma_A||^2_{h^{A}_{\alpha,\varepsilon,\tau}e^{-2\rho\varphi}}$ is a smooth semi-positive function on $Y_H$.
    In particular, defining $\psi:=-\log ||\sigma_A||^2_{h^{A}_{\alpha,\varepsilon,\tau}e^{-2\rho\varphi}}=-\log ||\sigma_A||^2_{h^{A}_{\alpha,\varepsilon,\tau}}+2\rho\varphi$ yields that $\psi$ is a \kah potential of $\omega_Y$ on $Y_{HZ}$ from the following calculation. For any local open subset $U\subset Y_{HZ}$, 
    \begin{align*}
        \idd\psi&=-\idd\log ||\sigma_A||^2_{h^{A}_{\alpha,\varepsilon,\tau}}+2\rho\idd\varphi=-\idd\log|\sigma_A|^2-\idd\log h^{A}_{\alpha,\varepsilon,\tau}+2\rho\omega\\
        &=i\Theta_{A,h^{A}_{\alpha,\varepsilon,\tau}}+2\rho\omega=\omega_Y,
    \end{align*}
    on $U$ where $-\log|\sigma_A|$ is pluriharmonic on $U$.

    For any $\overline{\partial}$-closed $f\in\Gamma(X,\mathscr{L}^{n,q}_{A\otimes E\otimes\mathrm{det}\,E,h^{A}_{\alpha,\varepsilon,\tau}\otimes h\otimes\mathrm{det}\,h})$, we get 
    \begin{align*}
        \int_{Y_{HZ}}\langle B_{e^{-\psi},\omega_Y}^{-1}f,f\rangle_{h^{A}_{\alpha,\varepsilon,\tau}\otimes h\otimes\mathrm{det}\,h,\omega_Y}dV_{\omega_Y}=\frac{1}{q}\int_{Y_{HZ}}|f|^2_{h^{A}_{\alpha,\varepsilon,\tau}\otimes h\otimes\mathrm{det}\,h,\omega_Y}dV_{\omega_Y}<+\infty,
    \end{align*}
    i.e. $f\in L^2_{n,q}(Y_{HZ},A\otimes E\otimes\mathrm{det}\,E,h^{A}_{\alpha,\varepsilon,\tau}\otimes h\otimes\mathrm{det}\,h,\omega_Y)$ by compactness of X, where $B_{e^{-\psi},\omega_Y}:=[\idd\psi\otimes\mathrm{id}_{A\otimes E\otimes\mathrm{det}\,E},\omega_Y]=q$ for any $(n,q)$-forms.

    Since $L^2$-type Nakano semi-positivity of $h\otimes\mathrm{det}\,h\otimes h^{A}_{\alpha,\varepsilon,\tau}e^\psi$ on $Y_{HZ}$, for a smooth Hermitian metric $e^{-\psi}$ on a trivial line bundle and for any $q\geq 1$ there exists $u\in L^2_{n,q-1}(Y_{HZ},A\otimes E\otimes\mathrm{det}\,E,h^{A}_{\alpha,\varepsilon,\tau}\otimes h\otimes\mathrm{det}\,h,\omega_Y)$ such that $\overline{\partial}u=f$ on $Y_{HZ}$ and 
    \begin{align*}
        \int_{Y_{HZ}}|u|^2_{h^{A}_{\alpha,\varepsilon,\tau}\otimes h\otimes\mathrm{det}\,h,\omega_Y}dV_{\omega_Y}\leq\int_{Y_{HZ}}\langle B_{e^{-\psi},\omega_Y}^{-1}f,f\rangle_{h^{A}_{\alpha,\varepsilon,\tau}\otimes h\otimes\mathrm{det}\,h,\omega_Y}dV_{\omega_Y}<+\infty,
    \end{align*}
    where $|u|^2_{h^{A}_{\alpha,\varepsilon,\tau}\otimes h\otimes\mathrm{det}\,h,\omega_Y}$ is locally integrable on $Y_{HZ}$.
    From Lemma \ref{Ext d-equation for hypersurface}, letting $u=0$ on $Z\cup H\cup D$ then we have that $\overline{\partial}u=f$ on $X$ and $u\in\Gamma(X,\mathscr{L}^{n,q-1}_{\mathscr{F}\otimes E\otimes\mathrm{det}\,E,h^{\mathscr{F}}_{\alpha,\varepsilon,\tau}\otimes h\otimes\mathrm{det}\,h,\omega_Y})$.

    Hence, we obtain $H^q(X,K_X(\log D)\otimes A\otimes N\otimes\mathscr{E}(h\otimes\mathrm{det}\,h))=0$ for any $q\geq 1$.

    Similarly proved in the case of $(p,n)$-forms by using $L^2$-estimates for dual Nakano semi-positivity (see [Wat22b,\,Theorem\,4.9]).
\end{proof}

\begin{theorem}\label{Log V-thm of semi-posi + s-Grif no nu condition}
    Let $X$ be a projective manifold, $\omega$ be a \kah metric on $X$ and $D$ be a simple normal crossing divisor in $X$. 
    Let $A$ be a holomorphic line bundle and $E$ be a holomorphic vector bundle equipped with a singular Hermitian metric $h$. 
    We assume that $A\otimes\mathcal{O}_X(D)$ is semi-positive and that $h$ is strictly Griffiths $\rho_\omega$-positive. 
    Then for any $p,q\geq 1$, we have that 
    \begin{align*}
        H^q(X,K_X\otimes\mathcal{O}_X(D)\otimes A\otimes \mathscr{E}(h\otimes\mathrm{det}\,h))&=0,\\
        H^n(X,\Omega_X^p(\log D)\otimes A\otimes \mathscr{E}(h\otimes\mathrm{det}\,h))&=0.
    \end{align*}
\end{theorem}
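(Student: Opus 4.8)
The plan is to transplant the argument of Corollary~\ref{Log V-thm of semi-posi + s-Grif} to the choice $\Delta=D$ (so that every $a_j=1$, hence $\tau_j=1$), invoking the first part of Theorem~\ref{L2-type Log Dolbeault isomorphism with sHm vector} (the Lelong-number-free branch, which needs only Griffiths semi-positivity of $h$). First I would fix a \kah metric $\omega$ on $X$, set $Y:=X\setminus D$ and $F:=A\otimes\mathcal{O}_X(D)$, and, using semi-positivity of $F$, choose smooth Hermitian metrics $h_A$ on $A$ and $h_j$ on $\mathcal{O}_X(D_j)$ with $i\Theta_{F,h_F}\geq0$ for $h_F:=h_A\prod^s_{j=1}h_j$; fix also the reference metrics $h_{D_j}:=||\bullet||^2_{D_j}$, put $h_\Delta:=\prod^s_{j=1}h_{D_j}$, and let $\sigma_j$ be the defining sections. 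On $A|_Y$ I would then take
\[
  h^A_{\alpha,\varepsilon}:=h_F\cdot(h_\Delta)^{-1}\cdot\prod^s_{j=1}||\sigma_j||^{2}_{D_j}\bigl(\log(\varepsilon||\sigma_j||^2_{D_j})\bigr)^{2\alpha}
\]
with $\alpha$ large and $\varepsilon\in(0,1]$ small. Since $a_j=\tau_j=1$, the ``$(\tau_j-a_j)\Theta_{\mathcal{O}_X(D_j)}$'' term that appears in Corollary~\ref{Log V-thm for big in 4} is absent, so only $\alpha$ and $\varepsilon$ need to be tuned, and just as there one obtains $i\Theta_{A,h^A_{\alpha,\varepsilon}}\geq-\rho\omega$ on $Y$, where $\rho$ is the constant from the strictly Griffiths $\rho_\omega$-positivity of $h$. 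Setting $\omega_Y:=i\Theta_{A,h^A_{\alpha,\varepsilon}}+2\rho\omega>0$ gives a Poincar\'e type \kah metric on $Y$.

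Next I would collect the two structural inputs. Because $h^A_{\alpha,\varepsilon}$ is equivalent along $D$ to $h_F\prod^s_{j=1}||\sigma_j||^2_{D_j}(\log||\sigma_j||^2_{D_j})^{2\alpha}$, the first part (the $\tau_j=1$ case) of Theorem~\ref{L2-type Log Dolbeault isomorphism with sHm vector}, applied with $\omega_P$ replaced by the equivalent Poincar\'e type metric $\omega_Y$, yields the $L^2$-type Log Dolbeault isomorphism
\[
  H^q\bigl(X,\Omega^p_X(\log D)\otimes A\otimes\mathscr{E}(h\otimes\mathrm{det}\,h)\bigr)\cong H^q\bigl(\Gamma(X,\mathscr{L}^{p,\ast}_{A\otimes E\otimes\mathrm{det}\,E,h^A_{\alpha,\varepsilon}\otimes h\otimes\mathrm{det}\,h,\omega_Y})\bigr).
\]
Moreover, by Theorems~\ref{h s-Gri then h * det h s-Nak} and~\ref{h s-Gri then h * det h s-dual Nak}, $h\otimes\mathrm{det}\,h$ is $L^2$-type strictly Nakano $(r+1)\rho_\omega$-positive and strictly dual Nakano $(r+1)\rho_\omega$-positive, where $r=\mathrm{rank}\,E$. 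Combining these as in the proof of Corollary~\ref{Log V-thm of semi-posi + s-Grif}: on any open $U\subset Y$ carrying \kah potentials $\varphi$ of $\omega$ and $\psi$ of $\omega_Y$, the metric $h_A:=h^A_{\alpha,\varepsilon}e^{\psi-(r+1)\rho\varphi}$ on $A|_U$ has $i\Theta_{A,h_A}=i\Theta_{A,h^A_{\alpha,\varepsilon}}-\omega_Y+(r+1)\rho\omega=(r-1)\rho\omega\geq0$, so by [Wat22b,\,Theorem\,3.18] the singular metric $h\otimes\mathrm{det}\,h\otimes h^A_{\alpha,\varepsilon}e^\psi=(h\otimes\mathrm{det}\,h\,e^{(r+1)\rho\varphi})\otimes h_A$ is $L^2$-type Nakano semi-positive and dual Nakano semi-positive on $U$.

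Then I would run the Stein reduction exactly as in Corollary~\ref{Log V-thm of semi-posi + s-Grif}: projectivity provides a hypersurface $H$ with $X\setminus H$ Stein and $\omega=\idd\varphi$ there, so $Y_H:=Y\setminus H$ is Stein; choosing $\sigma_A\in H^0(Y_H,A)$ and setting $Z:=\{\sigma_A=0\}$, $Y_{HZ}:=Y_H\setminus Z$ is Stein and $\psi:=-\log||\sigma_A||^2_{h^A_{\alpha,\varepsilon}e^{-2\rho\varphi}}$ is a \kah potential of $\omega_Y$ on $Y_{HZ}$. Given a $\overline{\partial}$-closed $f\in\Gamma(X,\mathscr{L}^{n,q}_{A\otimes E\otimes\mathrm{det}\,E,h^A_{\alpha,\varepsilon}\otimes h\otimes\mathrm{det}\,h,\omega_Y})$, compactness of $X$ makes its $\omega_Y$-$L^2$ norm over $Y_{HZ}$ finite, and applying $L^2$-type Nakano semi-positivity of $h\otimes\mathrm{det}\,h\otimes h^A_{\alpha,\varepsilon}e^\psi$ on the Stein manifold $Y_{HZ}$ with the auxiliary smooth metric $e^{-\psi}$ on the trivial line bundle (for which $B_{e^{-\psi},\omega_Y}=q$ on $(n,q)$-forms) produces $u$ with $\overline{\partial}u=f$ on $Y_{HZ}$ and $\int_{Y_{HZ}}|u|^2_{h^A_{\alpha,\varepsilon}\otimes h\otimes\mathrm{det}\,h,\omega_Y}dV_{\omega_Y}\leq\tfrac{1}{q}\int_{Y_{HZ}}|f|^2_{h^A_{\alpha,\varepsilon}\otimes h\otimes\mathrm{det}\,h,\omega_Y}dV_{\omega_Y}<+\infty$. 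Extending $u$ by zero across $Z\cup H\cup D$ via Lemma~\ref{Ext d-equation for hypersurface} gives $\overline{\partial}u=f$ on all of $X$ with $u\in\Gamma(X,\mathscr{L}^{n,q-1}_{A\otimes E\otimes\mathrm{det}\,E,h^A_{\alpha,\varepsilon}\otimes h\otimes\mathrm{det}\,h,\omega_Y})$, whence $H^q(X,K_X\otimes\mathcal{O}_X(D)\otimes A\otimes\mathscr{E}(h\otimes\mathrm{det}\,h))=0$ for $q\geq1$; the $(p,n)$-case is identical using the $L^2$-estimate for dual Nakano semi-positivity ([Wat22b,\,Theorem\,4.9]). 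I expect no genuinely new obstacle, since every ingredient is borrowed from Corollary~\ref{Log V-thm of semi-posi + s-Grif}; the only points deserving care are that with $\Delta=D$ one still has enough room in $(\alpha,\varepsilon)$ to force $i\Theta_{A,h^A_{\alpha,\varepsilon}}\geq-\rho\omega$ (which is in fact easier here, the $\tau_j-a_j$ term having dropped out), and that the $\tau_j=1$ branch of Theorem~\ref{L2-type Log Dolbeault isomorphism with sHm vector}, which needs only Griffiths semi-positivity of $h$ and no Lelong-number bound on $\mathrm{det}\,h$, does supply both the resolution and the displayed isomorphism.
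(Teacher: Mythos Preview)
Your proposal is correct and follows essentially the same approach as the paper: the paper treats Theorem~\ref{Log V-thm of semi-posi + s-Grif no nu condition} as the $\tau_j=1$ specialization of the argument for Corollary~\ref{Log V-thm of semi-posi + s-Grif}, invoking the Lelong-number-free branch of Theorem~\ref{L2-type Log Dolbeault isomorphism with sHm vector}, which is exactly what you do. Your observations that the $(\tau_j-a_j)$-term drops out and that only Griffiths semi-positivity of $h$ is needed for the resolution are precisely the simplifications the paper has in mind.
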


Here, we get an $L^2$-type Dolbeault isomorphism for $L^2$-type Nakano semi-positivity in the same way as in Theorem \ref{L2-type Log Dolbeault isomorphism with sHm vector},
where the Lelong number condition is $2\delta$ from $\delta$ since $\otimes\,\mathrm{det}\,h$ is not necessary.
Similar to the proofs of Theorems \ref{Log V-thm of k-posi + Grif} and Corollary \ref{Log V-thm of semi-posi + s-Grif}, we obtain the following logarithmic vanishing theorems for $L^2$-type Nakano positivity. 

\begin{corollary}\label{Log V-thm of k-posi + Nak}
    Let $X$ be a projective manifold and $D=\sum^s_{j=1}D_j$ be a simple normal crossing divisor in $X$. 
    Let $A$ be a holomorphic line bundle and $E$ be a holomorphic vector bundle equipped with a singular Hermitian metric $h$ which is $L^2$-type Nakano semi-positive. 
    If there exists $0<\delta\leq1$ such that $\nu(-\log \mathrm{det}\,h,x)<2\delta$ for all points in $D$ and that for an $\mathbb{R}$-divisor $\Delta=\sum^s_{j=1}a_jD_j$ with $a_j\in(\delta,1]$, the $\mathbb{R}$-line bundle $A\otimes\mathcal{O}_X(\Delta)$ is $k$-positive. 
    Then for any nef line bundle $N$ and any $q\geq k$, we have that 
    \begin{align*}
        H^q(X,K_X\otimes\mathcal{O}_X(D)\otimes A\otimes N\otimes \mathscr{E}(h))=0.
    \end{align*}
\end{corollary}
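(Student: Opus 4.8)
The plan is to carry out the argument of Theorem~\ref{Log V-thm of k-posi + Grif} in the $L^2$-type Nakano setting: replace $\mathscr{E}(h\otimes\mathrm{det}\,h)$ by $\mathscr{E}(h)$ throughout, work only in the $(n,q)$-bidegree, and use as the local $\overline{\partial}$-solving device the $L^2$-estimate that is, by Definition~\ref{def Nakano semi-posi sing}, built into the hypothesis that $h$ is $L^2$-type Nakano semi-positive. Two ingredients must be assembled: (i) an $L^2$-type Log Dolbeault sheaf resolution $0\to K_X(\log D)\otimes\mathcal{O}_X(F)\otimes\mathscr{E}(h)\to\mathscr{L}^{n,\ast}_{F\otimes E,h^F_Y\otimes h,\omega_P}$, obtained exactly as Theorem~\ref{L2-type Log Dolbeault isomorphism with sHm vector} but with $\mathscr{E}(h)$ in place of $\mathscr{E}(h\otimes\mathrm{det}\,h)$; and (ii) the metric construction and curvature bookkeeping from the proof of Theorem~\ref{Log V-thm of k-posi + psef in 4}. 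Since $L^2$-type Nakano semi-positivity is defined only under Griffiths semi-positivity, $-\log\mathrm{det}\,h$ is plurisubharmonic and all the local tools of \S3 remain available, just with one fewer factor of $\mathrm{det}\,h$; this is precisely why the Lelong hypothesis here reads $\nu(-\log\mathrm{det}\,h,x)<2\delta$ rather than $<\delta$.

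For (i) I would follow Proposition~\ref{L2-type Log Dol isom at q=0 with sHm vector}. On a chart $W^*_r=(\Delta^*_r)^t\times(\Delta_r)^{n-t}$ along $D$, exactness at $q=0$ amounts to showing that the holomorphic coefficients of a square-integrable $\overline{\partial}$-closed section of $\mathscr{L}^{n,0}$ extend across $D$ with at most logarithmic pole and lie in $\mathscr{E}(h)$; this uses the smooth Griffiths semi-positive metrics $h_\nu\nearrow h$, the local boundedness of $\widehat{h^*}=h/\mathrm{det}\,h$, a H\"older estimate against $(\mathrm{det}\,h)^{1/\delta}\in L^1_{\mathrm{loc}}$ (valid by Skoda's result exactly because $\nu(-\log\mathrm{det}\,h,x)<2\delta$), and Example~\ref{Example of integral}. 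Exactness at $q\geq1$ is the verbatim argument of Theorem~\ref{L2-type Log Dolbeault isomorphism with sHm line}: one builds $h^F_{\alpha,\tau}$ on $W^*_r$ with $i\Theta_{F,h^F_{\alpha,\tau}}\geq_{Nak}2\alpha\omega_P\otimes\mathrm{id}_F$, so $A^{n,s}_{F,h^F_{\alpha,\tau},\omega_P}>0$ for $s\geq1$, and invokes Definition~\ref{def Nakano semi-posi sing} on the Stein set $W^*_r$ to solve $\overline{\partial}$ in each degree $q\geq1$. One also needs $\mathscr{E}(h)$ to be coherent here, which follows from the Hosono--Inayama coherence argument whose engine is exactly this $L^2$-estimate.

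With (i) in hand, set $F:=A\otimes\mathcal{O}_X(\Delta)$ and $\mathscr{F}:=A\otimes N$ and reproduce the construction from the proof of Theorem~\ref{Log V-thm of k-posi + psef in 4}: for $\alpha$ large, $\tau_j\in(\delta,1]$ close to $a_j$, and $\varepsilon$ small, one obtains a smooth Hermitian metric $h^{\mathscr{F}}_{\alpha,\varepsilon,\tau}$ on $\mathscr{F}|_Y$ and a Poincar\'e type \kah metric $\omega_Y=i\Theta_{\mathscr{F},h^{\mathscr{F}}_{\alpha,\varepsilon,\tau}}+2(4n+1)\gamma\omega$ with $\gamma=c_0/(32n^2)$, $c_0:=\min_{x\in X}\lambda^k_{\omega,h_F}(x)>0$. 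Then (i) gives $H^q(X,K_X(\log D)\otimes A\otimes N\otimes\mathscr{E}(h))\cong H^q(\Gamma(X,\mathscr{L}^{n,\ast}_{\mathscr{F}\otimes E,h^{\mathscr{F}}_{\alpha,\varepsilon,\tau}\otimes h,\omega_Y}))$, and the eigenvalue estimate from that proof yields $A^{n,q}_{\mathscr{F},h^{\mathscr{F}}_{\alpha,\varepsilon,\tau},\omega_Y}\geq 1/2$, i.e. $[i\Theta_{\mathscr{F},h^{\mathscr{F}}_{\alpha,\varepsilon,\tau}}\otimes\mathrm{id}_E,\Lambda_{\omega_Y}]\geq 1/2$, for all $q\geq k$. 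By projectivity of $X$, fix a hypersurface $H$ with $X\setminus H$ Stein, so $Y_H:=Y\setminus H$ is Stein; for a $\overline{\partial}$-closed $f\in\Gamma(X,\mathscr{L}^{n,q}_{\mathscr{F}\otimes E,h^{\mathscr{F}}_{\alpha,\varepsilon,\tau}\otimes h,\omega_Y})$ with $q\geq k$, compactness of $X$ makes the $L^2$ norm of $f$ on $Y_H$ finite, so Definition~\ref{def Nakano semi-posi sing} applied on $Y_H$ with twisting datum $(\mathscr{F},h^{\mathscr{F}}_{\alpha,\varepsilon,\tau})$ produces $u$ with $\overline{\partial}u=f$ on $Y_H$ and the $L^2$-estimate; extending $u$ by zero across $H\cup D$ and using Lemma~\ref{Ext d-equation for hypersurface} gives $u\in\Gamma(X,\mathscr{L}^{n,q-1}_{\mathscr{F}\otimes E,h^{\mathscr{F}}_{\alpha,\varepsilon,\tau}\otimes h,\omega_Y})$ with $\overline{\partial}u=f$ on $X$, hence $H^q(X,K_X\otimes\mathcal{O}_X(D)\otimes A\otimes N\otimes\mathscr{E}(h))=0$ for $q\geq k$. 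There is no $(p,n)$-form statement because that case would require dual Nakano semi-positivity of $h$, which $L^2$-type Nakano semi-positivity does not provide.

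The only genuinely delicate point, I expect, is step~(i): confirming that $\mathscr{E}(h)$ is coherent under mere $L^2$-type Nakano semi-positivity, that the local $q=0$ analysis of Proposition~\ref{L2-type Log Dol isom at q=0 with sHm vector} survives with $h$ non-smooth (it does, since $L^2$-type Nakano semi-positivity forces Griffiths semi-positivity, so $\mathrm{det}\,h$ is locally bounded below and $\widehat{h^*}$ locally bounded above), and that the single surviving factor of $\mathrm{det}\,h$ is tracked consistently through the H\"older estimates --- the arithmetic that turns the Griffiths threshold $\nu(-\log\mathrm{det}\,h,x)<\delta$ into $\nu(-\log\mathrm{det}\,h,x)<2\delta$. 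Everything downstream of the resolution is a line-by-line transcription of the proof of Theorem~\ref{Log V-thm of k-posi + Grif}, restricted to the $(n,q)$-bidegree.
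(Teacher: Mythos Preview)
Your proposal is correct and matches the paper's approach essentially line for line: the paper states just before Corollary~\ref{Log V-thm of k-posi + Nak} that one first establishes an $L^2$-type Log Dolbeault isomorphism for $\mathscr{E}(h)$ ``in the same way as in Theorem~\ref{L2-type Log Dolbeault isomorphism with sHm vector}, where the Lelong number condition is $2\delta$ from $\delta$ since $\otimes\,\mathrm{det}\,h$ is not necessary,'' and then proceeds ``similar to the proofs of Theorems~\ref{Log V-thm of k-posi + Grif}.'' Your identification of Definition~\ref{def Nakano semi-posi sing} as the local and global $\overline{\partial}$-solving device (replacing the appeal to Theorem~\ref{L2-estimate of k-posi + Grif}), and of the single-$\mathrm{det}\,h$ bookkeeping as the source of the $2\delta$ threshold, is exactly the content the paper leaves implicit.
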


\begin{corollary}\label{Log V-thm of semi-posi + s-Nak}
    Let $X$ be a projective manifold, $D=\sum^s_{j=1}D_j$ be a simple normal crossing divisor and $\omega$ be a \kah metric. 
    Let $A$ be a holomorphic line bundle and $E$ be a holomorphic vector bundle equipped with a singular Hermitian metric $h$. 
    We assume that $h$ is $L^2$-type strictly Nakano $\rho_\omega$-positive. 
    If there exists $0<\delta\leq1$ such that $\nu(-\log \mathrm{det}\,h,x)<2\delta$ for all points in $D$ and that for an $\mathbb{R}$-divisor $\Delta=\sum^s_{j=1}a_jD_j$ with $a_j\in(\delta,1]$, the $\mathbb{R}$-line bundle $A\otimes\mathcal{O}_X(\Delta)$ is semi-positive. 
    Then for any $q\geq1$, we get
    \begin{align*}
        H^q(X,K_X\otimes\mathcal{O}_X(D)\otimes A\otimes \mathscr{E}(h))=0.
    \end{align*}
\end{corollary}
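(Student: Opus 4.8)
The plan is to transcribe the proof of Corollary~\ref{Log V-thm of semi-posi + s-Grif} almost verbatim, making two substitutions: the Griffiths input (``$h$ Griffiths semi-positive, $h\otimes\mathrm{det}\,h$ $L^2$-type strictly Nakano $(r+1)\rho_\omega$-positive'') is replaced by the direct hypothesis that $h$ is $L^2$-type strictly Nakano $\rho_\omega$-positive, and the logarithmic Dolbeault resolution of Theorem~\ref{L2-type Log Dolbeault isomorphism with sHm vector} is replaced by its $L^2$-type Nakano analogue --- the $(n,\ast)$-resolution obtained exactly as in Theorem~\ref{L2-type Log Dolbeault isomorphism with sHm vector} but without twisting by $\mathrm{det}\,h$, so that the relevant Lelong number bound becomes $\nu(-\log\mathrm{det}\,h,x)<2\delta$ (see the paragraph preceding Corollary~\ref{Log V-thm of k-posi + Nak}). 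First I would fix a small $\gamma\in(0,\min\{1,\rho/2\})$ and, just as in the proof of Corollary~\ref{Log V-thm for big in 4} with $F=A\otimes\mathcal{O}_X(\Delta)$ carrying a metric of semi-positive curvature, build a smooth Hermitian metric $h^A_{\alpha,\varepsilon,\tau}$ on $A|_Y$, where $Y:=X\setminus D$, with $i\Theta_{A,h^A_{\alpha,\varepsilon,\tau}}\geq-\gamma\omega$, together with the Poincar\'e type \kah metric $\omega_Y=i\Theta_{A,h^A_{\alpha,\varepsilon,\tau}}+2\gamma\omega>0$ on $Y$; here $\alpha$ is large, $\tau_j\in(\delta,1]$ is chosen close to $a_j$ (possible since $a_j\in(\delta,1]$), and $\varepsilon$ is small. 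For $\alpha$ large enough the $L^2$-type Nakano logarithmic Dolbeault isomorphism then gives
\begin{align*}
    H^q(X,K_X\otimes\mathcal{O}_X(D)\otimes A\otimes\mathscr{E}(h))\cong H^q\bigl(\Gamma(X,\mathscr{L}^{n,\ast}_{A\otimes E,h^A_{\alpha,\varepsilon,\tau}\otimes h,\omega_Y})\bigr).
\end{align*}

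Next I would localize the $\overline{\partial}$-problem on a Stein open set. Since $X$ is projective, take a hypersurface $H$ with $X\setminus H$ Stein and $\omega=\idd\varphi$ on $X\setminus H$; then $Y_H:=Y\setminus H$ is Stein, and, exactly as in Corollary~\ref{Log V-thm of semi-posi + s-Grif}, pick $\sigma_A\in H^0(Y_H,A)$, put $Z:=\{\sigma_A=0\}$ and $\psi:=-\log||\sigma_A||^2_{h^A_{\alpha,\varepsilon,\tau}e^{-2\gamma\varphi}}$, so that $Y_{HZ}:=Y_H\setminus Z$ is Stein and $\idd\psi=\omega_Y$ on it. The positivity check is the only genuinely new point: on any $U\subseteq Y_{HZ}$ with \kah potential $\varphi$ of $\omega$, the smooth line bundle metric $h_A:=h^A_{\alpha,\varepsilon,\tau}e^{\psi-\rho\varphi}$ on $A$ satisfies
\begin{align*}
    i\Theta_{A,h_A}=i\Theta_{A,h^A_{\alpha,\varepsilon,\tau}}-\omega_Y+\rho\omega=(\rho-2\gamma)\omega\geq0,
\end{align*}
precisely because $\gamma\leq\rho/2$, while $he^{\rho\varphi}$ is $L^2$-type Nakano semi-positive on $U$ by $L^2$-type strictly Nakano $\rho_\omega$-positivity of $h$. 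Hence by [Wat22b,\,Theorem\,3.18] the product $h\otimes h^A_{\alpha,\varepsilon,\tau}e^\psi=(he^{\rho\varphi})\otimes h_A$ is $L^2$-type Nakano semi-positive on $Y_{HZ}$.

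The estimate is then closed off as in Corollary~\ref{Log V-thm of semi-posi + s-Grif}: take the auxiliary smooth Hermitian line bundle $(\underline{\mathbb{C}},e^{-\psi})$ on $Y_{HZ}$, whose curvature operator on $(n,q)$-forms equals $[\idd\psi,\Lambda_{\omega_Y}]=[\omega_Y,\Lambda_{\omega_Y}]=q\cdot\mathrm{id}$, hence is positive for $q\geq1$. For any $\overline{\partial}$-closed $f\in\Gamma(X,\mathscr{L}^{n,q}_{A\otimes E,h^A_{\alpha,\varepsilon,\tau}\otimes h,\omega_Y})$ with $q\geq1$, compactness of $X$ gives $\int_{Y_{HZ}}|f|^2_{h^A_{\alpha,\varepsilon,\tau}\otimes h,\omega_Y}dV_{\omega_Y}<+\infty$; applying the $L^2$-estimate built into Definition~\ref{def Nakano semi-posi sing} for the $L^2$-type Nakano semi-positive metric $h\otimes h^A_{\alpha,\varepsilon,\tau}e^\psi$ (so that the metric actually entering the estimate is $h\otimes h^A_{\alpha,\varepsilon,\tau}e^\psi\otimes e^{-\psi}=h\otimes h^A_{\alpha,\varepsilon,\tau}$) yields $u\in L^2_{n,q-1}(Y_{HZ},A\otimes E,h^A_{\alpha,\varepsilon,\tau}\otimes h,\omega_Y)$ with $\overline{\partial}u=f$ on $Y_{HZ}$. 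By Lemma~\ref{Ext d-equation for hypersurface}, setting $u=0$ across $Z\cup H\cup D$ gives $u\in\Gamma(X,\mathscr{L}^{n,q-1}_{A\otimes E,h^A_{\alpha,\varepsilon,\tau}\otimes h,\omega_Y})$ with $\overline{\partial}u=f$ on $X$, so the target cohomology group vanishes for every $q\geq1$. The main obstacle is not a single hard step but the constant bookkeeping just flagged: $h$ supplies only the budget $\rho$ of strict Nakano positivity (rather than the $(r+1)\rho$ one gets after twisting by $\mathrm{det}\,h$ in the Griffiths case), so $\gamma$ must be pushed strictly below $\rho/2$, and one should verify that this smaller $\gamma$ causes no conflict with $\omega_Y$ being of Poincar\'e type or with the largeness of $\alpha$ required for the Dolbeault isomorphism. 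Note finally that, unlike the Griffiths case, there is no companion $H^n(X,\Omega^p_X(\log D)\otimes\cdots)$ statement here, since $L^2$-type Nakano semi-positivity only furnishes the $(n,\ast)$-resolution.
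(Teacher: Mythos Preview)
Your proposal is correct and follows essentially the same approach as the paper, which merely states that Corollary~\ref{Log V-thm of semi-posi + s-Nak} is obtained ``similar to the proofs of Theorem~\ref{Log V-thm of k-posi + Grif} and Corollary~\ref{Log V-thm of semi-posi + s-Grif}'' using the $L^2$-type Nakano analogue of the logarithmic Dolbeault isomorphism. Your explicit choice of $\gamma\in(0,\rho/2)$ is exactly the bookkeeping adjustment needed here (and which the paper leaves implicit): since the strict Nakano hypothesis supplies only the budget $\rho$ rather than the $(r+1)\rho$ available in the Griffiths case, one cannot simply set $\gamma=\rho$ as in Corollary~\ref{Log V-thm of semi-posi + s-Grif}, and your computation $i\Theta_{A,h_A}=(\rho-2\gamma)\omega\geq0$ makes this transparent.
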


\subsection{Remarks and a counterexample}

In this subsection, we give a counterexample to the extension of Theorem \ref{Log V-thm for big no nu condition} to the same bidegree $(p,q)$ with $p+q>n$ as Theorem \ref{Log V-thm in HLWY16}, 
and state the relationship about whether or not there is a degree of freedom in positivity, i.e. a condition on the Lelong number, of vanishing theorems.

Here, the following corollary follows immediately from Theorem \ref{Log V-thm in HLWY16}.

\begin{corollary}\label{Log V-thm for smooth N-A-K type}$(\mathrm{simple ~Kodaira}$-$\mathrm{Akizuki}$-$\mathrm{Nakano ~type ~logarithmic ~vanishing})$
    Let $X$ be a compact \kah manifold, $D$ be a simple normal crossing divisor in $X$ and $A$ be a holomorphic line bundle.
    If $\mathcal{O}_X(D)$ is semi-positive and $A$ is positive then we have that
    \begin{align*}
        H^q(X,\Omega_X^p(\log D)\otimes A)=0 \qquad \mathit{for~any} \quad p+q>n.
    \end{align*}
\end{corollary}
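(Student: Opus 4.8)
The plan is to obtain Corollary~\ref{Log V-thm for smooth N-A-K type} as a direct specialization of Theorem~\ref{Log V-thm in HLWY16} in the case $k=1$. First I would note that, $A$ being a positive line bundle, it carries a smooth Hermitian metric $h_A$ with $i\Theta_{A,h_A}>0$ at every point of $X$; equivalently $i\Theta_{A,h_A}$ is semi-positive with $n=n-1+1$ positive eigenvalues everywhere, so $A$ is a $1$-positive line bundle in the sense of Definition~\ref{def of k-positivity}. Then I would apply Theorem~\ref{Log V-thm in HLWY16} with the $\mathbb{R}$-divisor taken to be $\Delta=0$ (all coefficients $a_j=0\in[0,1]$), so that $A\otimes\mathcal{O}_X(\Delta)=A$ is $1$-positive, and with the nef line bundle there taken to be $N=\mathcal{O}_X$ (the trivial bundle is nef). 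Theorem~\ref{Log V-thm in HLWY16} then gives $H^q(X,\Omega^p_X(\log D)\otimes A\otimes\mathcal{O}_X)=0$ for all $p+q\ge n+k=n+1$, which is exactly the asserted vanishing for $p+q>n$.

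Alternatively, and this is where the hypothesis that $\mathcal{O}_X(D)$ be semi-positive can be invoked, one may instead take $\Delta=\varepsilon D=\sum_{j=1}^{s}\varepsilon D_j$ for any $\varepsilon\in(0,1]$: fixing a smooth semi-positive metric $h_D$ on $\mathcal{O}_X(D)$, the curvature of the induced metric on $A\otimes\mathcal{O}_X(\Delta)$ is $i\Theta_{A,h_A}+\varepsilon\, i\Theta_{\mathcal{O}_X(D),h_D}$, which is positive definite since it is the sum of the strictly positive form $i\Theta_{A,h_A}$ and the semi-positive form $\varepsilon\, i\Theta_{\mathcal{O}_X(D),h_D}$; hence $A\otimes\mathcal{O}_X(\Delta)$ is again $1$-positive and the same conclusion follows from Theorem~\ref{Log V-thm in HLWY16}. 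In both routes the compactness of $X$ is what guarantees that ``$i\Theta_{A,h_A}>0$'' is a uniform (Kähler-type) positivity, but this is automatic from the definition of a positive line bundle.

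I do not expect any genuine obstacle here: the corollary is a bookkeeping consequence of Theorem~\ref{Log V-thm in HLWY16}, included mainly for contrast with the counterexample of \S4.3, which shows that the analogous Kodaira--Akizuki--Nakano statement fails once one replaces the positive smooth metric by a big singular metric with a nontrivial multiplier ideal sheaf. The only points requiring a line of verification are (i) that $1$-positivity in the sense of Definition~\ref{def of k-positivity} is precisely strict positivity of the curvature form, and (ii) that the trivial line bundle is nef, so that $N=\mathcal{O}_X$ is an admissible choice; both are immediate.
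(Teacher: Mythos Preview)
Your proposal is correct and matches the paper's own treatment: the paper simply states that the corollary ``follows immediately from Theorem~\ref{Log V-thm in HLWY16}'', and your specialization with $k=1$, $\Delta=0$ (or $\Delta=\varepsilon D$), and $N=\mathcal{O}_X$ is exactly the intended reading. Your observation that the $\Delta=0$ route does not actually use the semi-positivity of $\mathcal{O}_X(D)$ is also accurate; the hypothesis is included for the parallel with the counterexample in \S4.3 rather than out of logical necessity.
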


\begin{remark}
    Theorem \ref{Log V-thm for big no nu condition} and Corollary \ref{Log V-thm for nef big}, that is logarithmic vanishing theorem for big line bundle, cannot be extended to the same bidegree $(p,q)$ with $p+q>n$ as the Kodaira-Akizuki-Nakano type logarithmic vanishing theorem such as Corollary \ref{Log V-thm for smooth N-A-K type}.
\end{remark}

In fact, we obtain the following counterexample to the extension of the Kodaira-Akizuki-Nakano type to nef and big by using Ramanujam's counterexample.
    
\vspace{2mm}
    
$\mathbf{Counterexample.}$ 
    Let $X$ be a blown up of one point $x_0$ in $\mathbb{P}^n$ and $\pi:X\to\mathbb{P}^n$ be the natural morphism. Clearly the line bundle $\pi^*\mathcal{O}_{\mathbb{P}^n}(1)$ is nef and big.
    Let $D$ be a prime divisor in $X$ such that $\mathcal{O}_X(D)$ is semi-positive and that $\pi^*\mathcal{O}_{\mathbb{P}^n}(1)|_D$ is big on $D$.
    Then we have the following non-vanishing cohomology
    \begin{align*}
        H^{n-1}(X,\Omega_X^{n-1}(\log D)\otimes\pi^*\mathcal{O}_{\mathbb{P}^n}(1))\ne 0.
    \end{align*}

\vspace{2mm}




For example, let $\sigma\in H^0(\mathbb{P}^n,\mathcal{O}_{\mathbb{P}^n}(1))$ such that $x_0\notin D_{\mathbb{P}^n}:=div\,\sigma$ and $D:=\pi^*D_{\mathbb{P}^n}$ be a prime divisor, 
then we have that $\mathcal{O}_X(D)=\pi^*\mathcal{O}_{\mathbb{P}^n}(1)$ is semi-positive by holomorphicity of $\pi$ and positivity of $\mathcal{O}_{\mathbb{P}^n}(1)$ 
and that $\pi^*\mathcal{O}_{\mathbb{P}^n}(1)|_{D_{\mathbb{P}^n}}$ is ample by $\pi|_{X\setminus Exc}:X\setminus Exc\to \mathbb{P}^n\setminus \{x_0\}$ is a biholomorphism, where $Exc$ is the exceptional divisor.

\begin{proof}
    By the analytical characterization of nef and big (see [Dem10,\,Corollary\,6.19]), there exist a singular Hermitian metric $h_{\pi^*O(1)}$ on $\pi^*\mathcal{O}_{\mathbb{P}^n}(1)$ such that 
    $\mathscr{I}(h_{\pi^*O(1)})=\mathcal{O}_X$ and $i\Theta_{\pi^*\mathcal{O}_{\mathbb{P}^n}(1),h_{\pi^*O(1)}}\geq\varepsilon\omega$ in the sense of currents for some $\varepsilon>0$, where $\omega$ is a \kah metric on $X$.
    Here, we already know Ramanujan's counterexample (see \cite{Ram72},\,[Dem-book,\,ChapterVII]): $\quad H^p(X,\Omega_X^p\otimes\pi^*\mathcal{O}_{\mathbb{P}^n}(1))\ne0\quad$ for $\,\, 0\leq p\leq n-1$.

    The exact sequence $0\longrightarrow \Omega_X^p \longrightarrow \Omega_X^p(\log D) \longrightarrow \Omega_D^p \longrightarrow 0$
    twisted by $\pi^*\mathcal{O}_{\mathbb{P}^n}(1)$ yields the induced exact sequence
    \begin{align*}
        \cdots &\longrightarrow H^{n-2}(D,K_D\otimes \pi^*\mathcal{O}_{\mathbb{P}^n}(1)|_D) \longrightarrow H^{n-1}(X,\Omega_X^{n-1}\otimes \pi^*\mathcal{O}_{\mathbb{P}^n}(1)) \\
        &\longrightarrow H^{n-1}(X,\Omega_X^{n-1}(\log D)\otimes \pi^*\mathcal{O}_{\mathbb{P}^n}(1)) \longrightarrow H^{n-1}(D,K_D\otimes \pi^*\mathcal{O}_{\mathbb{P}^n}(1)|_D) \longrightarrow \cdots
    \end{align*}

    Here, $\pi^*\mathcal{O}_{\mathbb{P}^n}(1)|_D$ is nef and big on $D$ by the characterization of nef and the assumption. 
    Similar to above, there exist a singular Hermitian metric $h^D_{O(1)}$ on $\pi^*\mathcal{O}_{\mathbb{P}^n}(1)|_D$ such that 
    $\mathscr{I}(h^D_{O(1)})=\mathcal{O}_D$ and $i\Theta_{\pi^*\mathcal{O}_{\mathbb{P}^n}(1),h^D_{O(1)}}\geq\varepsilon_D\omega_D$ in the sense of currents for some $\varepsilon_D>0$, where $\omega_D$ is a \kah metric on $D$.
    By the Nadel vanishing theorem, we have $H^q(D,K_D\otimes \pi^*\mathcal{O}_{\mathbb{P}^n}(1)|_D)=0$ for any $1\leq q\leq n-1$.
    
    Hence, we have the following isomorphism 
    \begin{align*}
        0\ne H^{n-1}(X,\Omega_X^{n-1}\otimes \pi^*\mathcal{O}_{\mathbb{P}^n}(1))\cong H^{n-1}(X,\Omega_X^{n-1}(\log D)\otimes \pi^*\mathcal{O}_{\mathbb{P}^n}(1)),
    \end{align*}
    and this is the counterexample.
\end{proof}

\begin{remark}\label{Log V-thm big then with nu condition}
    Corollary \ref{Log V-thm for big in 4}, \ref{Log V-thm of semi-posi + s-Grif} and \ref{Log V-thm of semi-posi + s-Nak} also follow from Theorem \ref{Log V-thm for big no nu condition} and \ref{Log V-thm of semi-posi + s-Grif no nu condition} and $\mathrm{[Ina22,\,Theorem\,1.5]}$, respectively.
\end{remark}

\begin{proof}
    It is sufficient to show that Theorem \ref{Log V-thm for big no nu condition} $\Longrightarrow$ Corollary \ref{Log V-thm for big in 4}.
    By $k$-positivity of $A\otimes\mathcal{O}_X(\Delta)$, there exist smooth Hermitian metrics $h_A$ and $h_j$ on $A$ and $\mathcal{O}_X(D_j)$ respectively,
    such that the curvature form of the induced metric $h_A\prod^s_{j=1}h_j^{a_j}$ on $A\otimes\mathcal{O}_X(\Delta)$, i.e. $i\Theta_{A,h_A}+i\sum^s_{j=1}a_j\Theta_{\mathcal{O}_X(D_j),h_j}$,
    is semipositive. Let $\sigma_j$ be the defining section of $D_j$ then $\sigma_j$ induces the singular Hermitian metric $h_{D_j}=1/|\sigma_j|^2$ on $\mathcal{O}_X(D_j)$ such that $h_{D_j}$ is smooth on $X\setminus D_j$ and pseudo-effective on $X$, i.e. $\frac{i}{2\pi}\Theta_{\mathcal{O}_X(D_j),h_{D_j}}=[D_j]\geq0$ in the sense of currents.

    Define the singular Hermitian metric $h_{A\otimes D}:=h_A\prod^s_{j=1}h_j^{a_j} h_{D_j}^{1-a_j}$ on $A\otimes\mathcal{O}_X(D)$ then $h_{A\otimes D}$ is pseudo-effective.
    We show that $\mathscr{I}(h\otimes h_{A\otimes D})=\mathscr{I}(h)$ on $X$.

    (I) If $x\in X\setminus D$ then we get $\mathscr{I}(h\otimes h_{A\otimes D})_x=\mathscr{I}(h)_x$ by smooth-ness of $h_{A\otimes D}$.

    (II) Fixed a point $x\in D$. By the condition $\nu(-\log h,x)<2\delta\leq2$, we get $\mathscr{I}(h)_x=\mathcal{O}_{X,x}$.
    It is sufficient to show that $\mathcal{O}_{X,x}\subseteq\mathscr{I}(h\otimes h_{A\otimes D})_x$.
    Let $U$ be a open neighborhood of $x$ and $(z_1,\cdots,z_n)$ be a local coordinate chart such that the locus of $D$ is given by $z_1\cdots z_t=0$.
    Since locally the $\sigma_j$ can be taken to be coordinate functions. Here, $f\in\mathscr{I}(h\otimes h_{A\otimes D})(U)=\mathscr{I}(h\otimes\prod^s_{j=1}h_{D_j}^{1-a_j})(U) \iff$
    \begin{align*}
        I_f:=\int_U\frac{|f|^2}{\prod^t_{j=1}|z_j|^{2(1-a_j)}}e^{-\varphi}dV_{\mathbb{C}^n}<+\infty,
    \end{align*}
    where $h=e^{-\varphi}$. 
    By the assumption $\nu(-\log h,x)<2\delta$, i.e. $\nu(\varphi/\delta,x)<2$, we have $\int_Ue^{-\varphi/\delta}dV_{\mathbb{C}^n}<+\infty$.
    From the H\"older's inequality, for any $f\in\mathcal{O}_X(U)$ we get
    \begin{align*}
        I_f\leq\sup_U|f|^2\Bigl(\int_U\prod^t_{j=1}|z_j|^{-2\frac{1-a_j}{1-\delta}}dV_{\mathbb{C}^n}\Bigr)^{1-\delta}\cdot\Bigl(\int_Ue^{-\varphi/\delta}dV_{\mathbb{C}^n}\Bigr)^\delta.
    \end{align*}
    By the assumption $a_j\in(\delta,1]$, the integral of $\prod^t_{j=1}|z_j|^{-2\frac{1-a_j}{1-\delta}}$ is finite where $-\frac{1-a_j}{1-\delta}>-1$, then we get $I_f<+\infty$.
    Hence, $\mathcal{O}_{X,x}\subseteq\mathscr{I}(h\otimes h_{A\otimes D})_x$.

    Therefore, applying Theorem \ref{Log V-thm for big no nu condition} to the big line bundle $A\otimes \mathcal{O}_X(D)\otimes L$ with the singular positive Hermitian metric $h_{A\otimes D}\otimes h$, for any $p,q\geq1$ we obtain 
    \begin{align*}
        &H^q(X,K_X\otimes\mathcal{O}_X(D)\otimes A\otimes L\otimes\mathscr{I}(h))\\
        &\qquad\cong H^q(X,K_X\otimes (A\otimes\mathcal{O}_X(D)\otimes L)\otimes\mathscr{I}(h_{A\otimes D}\otimes h))=0,\\
        &H^n(X,\Omega_X^p(\log D)\otimes A\otimes L\otimes\mathscr{I}(h))\\
        &\qquad\cong H^n(X,\Omega_X^p(\log D)\otimes \mathcal{O}_X(D)^*\otimes (A \otimes\mathcal{O}_X(D)\otimes L)\otimes\mathscr{I}(h_{A\otimes D}\otimes h))=0.
    \end{align*}

    Note that for the case of Corollary \ref{Log V-thm of semi-posi + s-Grif} and \ref{Log V-thm of semi-posi + s-Nak}, we use the fact that if $h$ is ($L^2$-type) strictly (dual) Nakano $\rho_\omega$-positive and $h_{A\otimes D}$ is pseudo-effective then $h\otimes h_{A\otimes D}$ is also ($L^2$-type) strictly (dual) Nakano $\rho_\omega$-positive (see [Wat22b,\,Corollary\,3.19]).
\end{proof}

\begin{remark}
    However, Theorem \ref{Log V-thm of k-posi + psef} and \ref{Log V-thm of k-posi + Grif} and Corollary \ref{Log V-thm of k-posi + Nak} cannot be shown directly from Theorem \ref{Log V-thm of k-posi + psef no nu condition} and \ref{Log V-thm of k-posi + Grif no nu condition} and $\mathrm{[Wat22b,\,Theorem\,6.1]}$, respectively, using the same technique as Remark \ref{Log V-thm big then with nu condition}.
    But, Corollary \ref{Log V-thm of k-posi + Nak} and the cohomology vanishing for $(n,q)$-forms in Theorem \ref{Log V-thm of k-posi + psef} and \ref{Log V-thm of k-posi + Grif} is shown by applying the following vanishing theorem to a singular Hermitian metric created in the same way as Remark \ref{Log V-thm big then with nu condition}.
\end{remark}

\begin{theorem}
    Let $X$ be a compact \kah manifold and $D$ be a simple normal crossing divisor on $X$ and $Y:=X\setminus D$.
    Let $A$ and $L$ be holomorphic line bundles and $E$ be a holomorphic vector bundle equipped with a singular Hermitian metric $h$.
    We assume that there exists a singular Hermitian metric $h_A$ on $A$ such that $h_A|_Y$ is smooth and $k$-positive on $Y$. We have the following
    \begin{itemize}
        \item [($a$)] If $L$ has a singular semi-positive Hermitian metric $h_L$ then for $p,q\geq k$ we get
        \begin{align*}
            H^q(X,K_X\otimes A\otimes L\otimes\mathscr{I}(h_A\otimes h_L))&=0,\\
            H^q(X,\Omega_X^p\otimes A\otimes L\otimes\mathscr{I}(h_A\otimes h_L))&=0.
        \end{align*} 
        \item [($b$)] If $X$ is projective and $h$ is Griffiths semi-positive then for $p,q\geq k$ we get
        \begin{align*}
            H^q(X,K_X\otimes \mathscr{E}(h_A\otimes h))&=0,\\
            H^q(X,\Omega_X^p\otimes \mathscr{E}(h_A\otimes h))&=0.
        \end{align*}
        \item [($c$)] If $X$ is projective and $h$ is $L^2$-type Nakano semi-positive then for $q\geq k$ we get
        \begin{align*}
            H^q(X,K_X\otimes \mathscr{E}(h_A\otimes h))=0.
        \end{align*}
    \end{itemize}
\end{theorem}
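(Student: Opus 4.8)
The plan is to deduce each assertion from an $L^2$-type Dolbeault isomorphism together with the $L^2$-estimates on $Y:=X\setminus D$ that are already available, in the manner of the proofs of Theorems~\ref{Log V-thm of k-posi + psef in 4} and~\ref{Log V-thm of k-posi + Grif}: realise the cohomology on $X$ as an $L^2$-Dolbeault cohomology computed on $Y$ with a Poincar\'e-type complete K\"ahler metric, solve the relevant $\overline{\partial}$-equation on $Y$, and descend the solution across $D$ via Lemma~\ref{Ext d-equation for hypersurface}. The only new point is that the positivity of $A$ lives on $Y$, encoded by the possibly $D$-singular metric $h_A$, so $h_A$ itself plays the role that $A\otimes\mathcal{O}_X(\Delta)$ played before.

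\emph{Step 1 (reduction to $Y$).} In case $(a)$ the weight of $h_A\otimes h_L$ is plurisubharmonic on $Y$ (as $i\Theta_{A,h_A}\geq0$ there) and locally bounded above near $D$, hence extends to a plurisubharmonic weight on $X$; so $h_A\otimes h_L$ is singular semi-positive on $A\otimes L$, and likewise $h_A\otimes h$ is Griffiths, resp. $L^2$-type Nakano, semi-positive on $A\otimes E$ in cases $(b)$, $(c)$. Fixing a complete Poincar\'e-type K\"ahler metric $\omega_P$ on $Y$ and applying Theorem~\ref{L2-type Dolbeault isomorphism with sHm}$(a)$ (resp. $(c)$, $(b)$) in its Poincar\'e-metric formulation — established as in the proof of Theorem~\ref{L2-type Log Dolbeault isomorphism with sHm line}, but \emph{without} inserting any $\|\sigma_j\|^{2\tau_j}$-factor into the metric on $A$ — one obtains
$$H^q\bigl(X,\Omega^p_X\otimes A\otimes L\otimes\mathscr{I}(h_A\otimes h_L)\bigr)\cong H^q\bigl(\Gamma(X,\mathscr{L}^{p,\ast}_{A\otimes L,\,h_A\otimes h_L,\,\omega_P})\bigr),$$
and its analogues with $\mathscr{E}(h_A\otimes h\otimes\mathrm{det}(h_A\otimes h))$ in cases $(b)$, $(c)$. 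Not modifying $h_A$ near $D$ is essential: the degree-$0$ exactness then forces the coefficients of a $\overline{\partial}$-closed $\mathscr{L}^{p,0}$-germ to extend holomorphically across $D$ \emph{without logarithmic poles} and to lie in $\mathscr{I}(h_A\otimes h_L)$, so the complex resolves the plain sheaf, not its logarithmic twist.

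\emph{Steps 2 and 3 ($\overline{\partial}$ on $Y$ and descent).} By Sylvester's inertia law the $k$-positivity of $h_A$ on $Y$ makes the curvature operators $A^{n,q}_{A,h_A,\omega_P}$ (for $q\geq k$) and $A^{p,n}_{A,h_A,\omega_P}$ (for $p\geq k$) positive definite on $Y$. For a $\overline{\partial}$-closed section $f$ of $\mathscr{L}^{n,q}_{A\otimes L,h_A\otimes h_L,\omega_P}$ over $X$ with $q\geq k$, compactness of $X$ gives $f\in L^2_{n,q}(Y,A\otimes L,h_A\otimes h_L,\omega_P)$, and — the crucial local estimate near $D$ — the decay of $|f|^2_{\omega_P}$ along $D$ dominates the blow-up of $B^{-1}_{h_A,\omega_P}$ there, so $\int_Y\langle B^{-1}_{h_A,\omega_P}f,f\rangle<\infty$; Theorem~\ref{L2-estimate of k-posi + psef}$(a)$ then produces $u\in L^2_{n,q-1}(Y,A\otimes L,h_A\otimes h_L,\omega_P)$ with $\overline{\partial}u=f$ on $Y$, and by Lemma~\ref{Ext d-equation for hypersurface} (setting $u=0$ on $D$) the identity $\overline{\partial}u=f$ holds on $X$ with $u\in\Gamma(X,\mathscr{L}^{n,q-1}_{A\otimes L,h_A\otimes h_L,\omega_P})$, so $[f]=0$. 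The $(p,n)$-case uses Theorem~\ref{L2-estimate of k-posi + psef}$(b)$ identically; this gives the two vanishings of $(a)$. For $(b)$ and $(c)$ I would run the same argument over the Stein set $Y\setminus H$ with $H$ a hypersurface making $X\setminus H$ Stein, using Theorem~\ref{L2-estimate of k-posi + Grif} in place of Theorem~\ref{L2-estimate of k-posi + psef} in $(b)$ and the $L^2$-estimate built into Definition~\ref{def Nakano semi-posi sing} in $(c)$, exactly as in the proof of Theorem~\ref{Log V-thm of k-posi + Grif}.

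\emph{Main obstacle.} The delicate part is Step~1: showing that the $\mathscr{L}^{p,\ast}$-complex built from the \emph{unmodified} $h_A$ genuinely resolves the non-logarithmic sheaf $\Omega^p_X\otimes A\otimes L\otimes\mathscr{I}(h_A\otimes h_L)$. Degree-$0$ exactness near $D$ is a direct $L^2$-computation, but exactness at degrees $\geq1$ requires solving $\overline{\partial}$ locally on the punctured polydisc with a metric that stays comparable to $h_A$ near $D$, so that $\mathscr{L}$-membership is not disturbed — a point subtler here than in $\S3$, where the Poincar\'e-curvature modification was harmless because it was itself the metric being used. In cases $(b)$, $(c)$ one must in addition carry along the auxiliary hypersurface(s) needed to obtain a Stein domain on which the vector-bundle $L^2$-estimates apply; the $k$-positivity of $h_A$ on $Y$ is precisely what keeps the curvature operator positive on $(n,q)$- and $(p,n)$-forms in the required ranges relative to $\omega_P$, so no extra positivity input near $D$ is needed.
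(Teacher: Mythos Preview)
Your Step 1 contains a genuine gap. The Poincar\'e-metric $L^2$-complex with the \emph{unmodified} metric $h_A$ does not resolve $\Omega^p_X\otimes A\otimes L\otimes\mathscr{I}(h_A\otimes h_L)$, and the failure is already at degree $0$, not at degrees $\geq1$ as you anticipated. Near a point of $D$ where $D=\{z_1\cdots z_t=0\}$, the Poincar\'e volume form contributes $\prod_{j\leq t}(|z_j|^2(\log|z_j|^2)^2)^{-1}$, while $|dz_I|^2_{\omega_P}$ contributes $\prod_{j\in I\cap\{1,\dots,t\}}|z_j|^2(\log|z_j|^2)^2$; hence the integrand for a holomorphic $(p,0)$-section $\sigma=\sigma_I\,dz_I$ carries an uncompensated factor $\prod_{j\leq t,\,j\notin I}(|z_j|^2(\log|z_j|^2)^2)^{-1}$. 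Though this factor is itself integrable, it interacts with the singular weight $e^{-\varphi_A-\varphi_L}$: with $t\geq2$, $I=\{2\}$, $\varphi_L=0$ and $\varphi_A=(1-\varepsilon)\log|z_1|^{-2}$ one has $\mathscr{I}(h_A\otimes h_L)=\mathcal{O}_X$, so $dz_2\in\Omega^1_X\otimes\mathscr{I}(h_A\otimes h_L)$, yet its $\mathscr{L}^{1,0}_{\omega_P}$-norm involves $\int |z_1|^{-2(2-\varepsilon)}(\log|z_1|^2)^{-2}=\infty$. Thus $\ker\overline{\partial}_0$ is strictly smaller than the target sheaf. In the proof of Theorem~\ref{L2-type Log Dolbeault isomorphism with sHm line} it is precisely the factor $\prod\|\sigma_j\|^{2\tau_j}$ in $h^F_Y$ that cancels the $|z_j|^{-2}$ from $dV_{\omega_P}$; removing it while retaining $\omega_P$ cannot produce the non-logarithmic sheaf.

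The paper's route is both simpler and avoids this obstruction: it invokes Theorem~\ref{L2-type Dolbeault isomorphism with sHm} \emph{as stated}, i.e.\ with a smooth K\"ahler metric $\omega$ on $X$, not a Poincar\'e metric. Since $h_A\otimes h_L$ is singular semi-positive on $X$, that theorem gives directly $H^q(X,\Omega^p_X\otimes A\otimes L\otimes\mathscr{I}(h_A\otimes h_L))\cong H^q(\Gamma(X,\mathscr{L}^{p,\ast}_{A\otimes L,\,h_A\otimes h_L}))$, where the $L^2$-sheaf is defined by local $\omega$-integrability. For a $\overline{\partial}$-closed global section $f$ one then applies Theorem~\ref{L2-estimate of k-posi + psef} on $Y$ with $\omega_Y=\omega|_Y$; the solution $u\in L^2(Y,\omega)$ is automatically locally $\omega$-integrable near $D$ and extends across $D$ by Lemma~\ref{Ext d-equation for hypersurface}, landing in $\Gamma(X,\mathscr{L}^{p,q-1}_{A\otimes L,\,h_A\otimes h_L})$ with no change of metric between the two steps. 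Your treatment of Steps~2--3 and of cases $(b),(c)$ via an auxiliary hypersurface is then essentially correct once anchored to this Dolbeault isomorphism rather than to a Poincar\'e-metric variant.
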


Here, two $L^2$-estimates used in the proofs of $(b)$ and $(c)$ corresponding to Theorem \ref{L2-estimate of k-posi + psef} are obtained similar to [Wat22b,\,Theorem\,4.8\,and\,4.9], respectively.
This theorem is shown by using these two $L^2$-estimates and Theorem \ref{L2-estimate of k-posi + psef} and \ref{L2-type Dolbeault isomorphism with sHm} and Lemma \ref{Ext d-equation for hypersurface}.

Finally, the cohomology vanishing for $(n,q)$-forms in Theorem \ref{Log V-thm of k-posi + psef no nu condition}, \ref{Log V-thm for big no nu condition}, \ref{Log V-thm of k-posi + Grif no nu condition} and \ref{Log V-thm of semi-posi + s-Grif no nu condition} follows immediately from already known results $($see \cite{Wat22a},\,\cite{Wat22b}$)$, and this paper gives an another proof using $L^2$-type Dolbeault isomorphisms for logarithmic sheaves.


\vspace{3mm}

{\bf Acknowledgement. } 
I would like to thank my supervisor Professor Shigeharu Takayama for guidance and helpful advice.

$ $

\rightline{\begin{tabular}{c}
    $\it{Yuta~Watanabe}$ \\
    $\it{Graduate~School~of~Mathematical~Sciences}$ \\
    $\it{The~University~of~Tokyo}$ \\
    $3$-$8$-$1$ $\it{Komaba, ~Meguro}$-$\it{ku}$ \\
    $\it{Tokyo, ~Japan}$ \\
    ($E$-$mail$ $address$: watayu@g.ecc.u-tokyo.ac.jp)
\end{tabular}}

\end{document}